

\documentclass[titlepage]{article}

\usepackage[utf8]{inputenc} 


\usepackage{geometry} 
\geometry{a4paper} 

\usepackage{graphicx} 


\usepackage{booktabs} 
\usepackage{array} 
\usepackage{paralist} 
\usepackage{verbatim} 
\usepackage{subfig} 

\usepackage{fancyhdr} 
\pagestyle{fancy} 
\lhead{}\chead{}\rhead{}
\lfoot{}\cfoot{\thepage}\rfoot{}

\usepackage{sectsty}
\allsectionsfont{\rmfamily\mdseries\upshape} 

\usepackage[nottoc,notlof,notlot]{tocbibind} 
\usepackage[titles,subfigure]{tocloft} 



\usepackage{amssymb}
\usepackage{amsmath}
\usepackage{amsthm}
\usepackage{bbm}

\DeclareMathOperator{\trace}{tr}
\DeclareMathOperator{\a.s.}{a.s.}
\DeclareMathOperator{\ess}{ess}
\DeclareMathOperator{\dist}{dist}
\DeclareMathOperator{\image}{image}

\numberwithin{equation}{section}

\newtheorem{thm}{Theorem}[section]
\newtheorem{lem}[thm]{Lemma}
\newtheorem{prop}[thm]{Proposition}
\newtheorem{cor}[thm]{Corollary}
\theoremstyle{definition}
\newtheorem{defn}[thm]{Definition}
\newtheorem{exmp}[thm]{Example}
\theoremstyle{remark}
\newtheorem{rem}[thm]{Remark}

\let\phi\varphi
\let\epsilon\varepsilon

\def\Pbb{\mathbb{P}}
\def\Ebb{\mathbb{E}}
\def\Rbb{\mathbb{R}}
\def\Fbb{\mathbb{F}}
\def\Nbb{\mathbb{N}}
\def\Qbb{\mathbb{Q}}
\def\1bb{\mathbbm{1}}
\def\Fcal{\mathcal{F}}
\def\Lcal{\mathcal{L}}
\def\Hcal{\mathcal{H}}
\def\Pcal{\mathcal{P}}
\def\Bcal{\mathcal{B}}
\def\Ecal{\mathcal{E}}
\def\Dcal{\mathcal{D}}
\def\Acal{\mathcal{A}}
\def\Hfrak{\mathfrak{H}}
\def\pfrak{\mathfrak{p}}
\def\Pfrak{\mathfrak{P}}


\begin{document}
\title{\Huge Backward Stochastic Differential Equations in Financial Mathematics}
\author{\huge Weiye Yang}
\date{} 
\maketitle

\tableofcontents

\newpage
\setcounter{section}{-1}
\section{Introduction}

A backward stochastic differential equation (BSDE) is an SDE of the form
\begin{align*}
 -dY_t &= f(t,Y_t,Z_t)dt - Z_t^*dW_t;\\
 Y_T &= \xi.
\end{align*}
It differs from a forward stochastic differential equation (FSDE) in two main aspects:
\begin{itemize}
 \item A BSDE has a {\em terminal} condition $Y_T = \xi$, as opposed to FSDEs, which have initial conditions.
 \item A solution of a BSDE consists of a {\em pair of processes} $(Y,Z)$ satisfying the equation.
\end{itemize}
The subject of BSDEs has seen extensive attention since their introduction in the linear case by Bismut (1973) and in the general case by Pardoux and Peng (1990). In contrast with deterministic differential equations, it is not enough to simply reverse the direction of time and treat the terminal condition as an initial condition, as we would then run into problems with adaptedness. Intuitively, our ``knowledge'' at time $t$ consists only of what has happened at all times $s \in [0,t]$, and we cannot reverse the direction of time whilst keeping this true. In this way, the theory of BSDEs is very much its own beast, separate from the theory of FSDEs.

BSDEs have a number of applications in mathematical finance. For example, we can see immediately that the problem of finding the time-$t$ price of a European contingent claim expiring at some fixed future time $T \ge t$ is exactly the problem of solving a linear BSDE. Additionally there is an intimate link between BSDEs and partial differential equations, which allows one to express the solution of a BSDE, and thus the price of a European contingent claim, in terms of the solution of a related PDE. This gives a generalisation of the Black-Scholes formula. BSDEs also act as the ``Euler-Lagrange equations'' in a number of utility maximisation problems, in the sense that the optimal value of such problems can be found through solving the related BSDE.

The layout of this essay is as follows: In Section \ref{BSDEs} we introduce BSDEs and go over the basic results of BSDE theory, including two major theorems: the existence and uniqueness of solutions and the comparison theorem. We also introduce linear BSDEs and the notion of supersolutions of a BSDE. In Section \ref{Eurodyncomp} we set up the financial framework in which we will price European contingent claims, and prove a result about the fair price of such claims in a dynamically complete market. In Section \ref{concave} we extend the theory of linear BSDEs to include concave BSDEs, and apply this to pricing claims in more complicated market models. In Section \ref{utility} we take a look at utility maximisation problems, and see how utilising BSDE theory allows for a relatively simple and neat solution in certain cases.

All proofs, unless stated otherwise, are my own.
\newpage

\section{Backward Stochastic Differential Equations}\label{BSDEs}

\subsection{Preliminary notation}
Before we start talking about SDEs we need a probability space on which to work. Here is our setup. Fix a finite time horizon $T>0$, and then:
\begin{itemize}
 \item Let $(\Omega,\Fcal,\Pbb)$ be a probability space in which we have an $n$-dimensional Brownian motion $W$.
 \item Let $\Fbb = (\Fcal_t)_{t \in [0,T]}$ be the augmented (i.e. right-continuous and containing $\Pbb$-null sets) filtration generated by $W$ up to time $T$.
\end{itemize}
Note that this means that all solutions to SDEs in this setup will be {\em strong} solutions. It also allows us to use the martingale representation theorem freely. We now define some notation.
\begin{itemize}
 \item For vectors $a,b \in \Rbb^d$, let $|a|$ denote the standard Euclidean norm and $a \cdot b$ the standard Euclidean inner product.
 \item For a matrix $A \in \Rbb^{n \times d}$, let $A^* \in \Rbb^{d \times n}$ denote its transpose and $|A|$ its Frobenius norm, i.e. $|A|^2 = \trace (AA^*)$.
 \item For continuous semimartingales $X = (X_t)_{t \in [0,T]}$, $Y = (Y_t)_{t \in [0,T]}$ taking values in $\Rbb$, $\Rbb ^d$ respectively, let $\langle X \rangle = (\langle X \rangle_t)_{t \in [0,T]}$ denote the quadratic variation and $\langle X,Y \rangle = (\langle X,Y \rangle_t)_{t \in [0,T]}$ the $\Rbb ^d$-valued quadratic covariation, where $\langle X,Y \rangle^i = \langle X,Y^i \rangle$ which is the standard $\Rbb$-valued quadratic covariation.
 \item Let $\Lcal^{2,d}_T$ be the space of square-integrable $\Fcal_T$-measurable random variables taking values in $\Rbb^d$, i.e. the space of $\Fcal_T$-measurable $X: \Omega \to \Rbb^d$ such that $\lVert X \rVert ^2 := \Ebb\left[ |X|^2 \right] < \infty$.
 \item Let $\Hcal^{2,d}_T$ be the space of predictable processes $\phi : \Omega \times [0,T] \to \Rbb^d$ such that $\lVert \phi \rVert ^2 := \Ebb\left[ \int_0^T |\phi_t|^2 dt \right] < \infty$. We call these processes {\em square-integrable}.
 \item Let $\Hcal^{1,d}_T$ be the space of predictable processes $\phi : \Omega \times [0,T] \to \Rbb^d$ such that $\Ebb\left[ \sqrt{\int_0^T |\phi_t|^2 dt} \right] < \infty$.
 \item For $\beta > 0$ and $\phi \in \Hcal^{2,d}_T$, define $\lVert \phi \rVert_\beta ^2 = \Ebb\left[ \int_0^T e^{\beta t}|\phi_t|^2 dt \right]$ and let $\Hcal^{2,d}_{T,\beta}$ be the space $\Hcal^{2,d}_T$ endowed with this norm.
\end{itemize}
\begin{rem}
Recall that $\Lcal^{2,d}_T$ and $\Hcal^{2,d}_T$ are Hilbert spaces. This implies by equivalence of norms that $\Hcal^{2,d}_{T,\beta}$ is also a Hilbert space for any $\beta > 0$.
\end{rem}

\subsection{Basic definitions}
\begin{defn}\label{BSDE}
A {\em backward stochastic differential equation} (or {\em BSDE} for short) is a stochastic differential equation of the form
\begin{equation}\label{genBSDE}
\begin{split}
 -dY_t &= f(t,Y_t,Z_t)dt - Z_t^*dW_t;\\
 Y_T &= \xi,
\end{split}
\end{equation}
or equivalently
\begin{equation}\label{BSDEint}
Y_t = \xi + \displaystyle\int_t^T f(s,Y_s,Z_s)ds - \displaystyle\int_t^T Z_s^*dW_s,
\end{equation}
where
\begin{enumerate}
 \item the random variable $\xi: \Omega \to \Rbb ^d$ is $\Fcal_T$-measurable,
 \item the random function $f: \Omega \times [0,T] \times \Rbb^d \times \Rbb^{n \times d} \to \Rbb^d$ (known as the {\em driver}) is $\Pfrak \otimes \Bcal^d \otimes \Bcal^{n \times d}$-measurable, where $\Pfrak$ is the predictable $\sigma$-algebra over $\Omega \times [0,T]$.
\end{enumerate}
The pair $(f,\xi)$ is known as the {\em data} of the BSDE.
\end{defn}

Notice that there are {\em two} unknown processes in a BSDE: $Y$ and $Z$. It is therefore worth giving an explicit definition of what we mean by a solution of a BSDE.

\begin{defn}
A continuous {\em solution} of BSDE \eqref{genBSDE} is a pair $(Y,Z) = (Y_t,Z_t)_{t \in [0,T]}$ such that $Y$ is a continuous adapted $\Rbb^d$-valued process and $Z$ is a predictable $\Rbb^{n \times d}$-valued process with $\int_0^T |Z_t|^2ds < \infty$ $\ $ $\Pbb \ \a.s.$, which satisfies the BSDE. The solution $(Y,Z)$ is {\em square-integrable} if $(Y,Z) \in \Hcal^{2,d}_T \times \Hcal^{2,n \times d}_T$.
\end{defn}

For our first main result we will need to assume some regularity conditions on the data:

\begin{defn}\label{standard}
The driver $f$ is {\em standard} if $f(\cdot,0,0) \in \Hcal^{2,d}_T$ and $f(\omega,t,y,z)$ is uniformly Lipschitz in $(y,z)$. This latter property means that $\exists C > 0$ such that $d\Pbb \otimes dt \ \a.s.$,
\begin{equation*}
|f(\cdot,y_1,z_1) - f(\cdot,y_2,z_2)| \leq C\left( |y_1 - y_2| + |z_1 - z_2| \right) \quad \forall (y_1,z_1),(y_2,z_2) \in \Rbb^d \times \Rbb^{n \times d}.
\end{equation*}
If in addition $\xi \in \Lcal^{2,d}_T$, we say that the data $(f,\xi)$ are {\em standard} data.
\end{defn}

It's worth making the following small observation before we move on:

\begin{prop}\label{fisHcal2}
Suppose that $f$ is a standard driver with Lipschitz constant $C$ and that $(y,z) \in \Hcal^{2,d}_T \times \Hcal^{2,n \times d}_T$. Then $f(\cdot,y,z) \in \Hcal^{2,d}_T$.
\end{prop}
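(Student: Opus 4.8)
The plan is to verify directly the two defining requirements for membership in $\Hcal^{2,d}_T$: that the process $(\omega,t) \mapsto f(\omega,t,y_t(\omega),z_t(\omega))$ is predictable, and that its $\Hcal^{2,d}_T$-norm is finite. I would treat these two points in turn.

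First I would address predictability. Since $(y,z) \in \Hcal^{2,d}_T \times \Hcal^{2,n \times d}_T$, both $y$ and $z$ are by definition predictable, so the map $(\omega,t) \mapsto (\omega,t,y_t(\omega),z_t(\omega))$ is measurable from $(\Omega \times [0,T], \Pfrak)$ into $(\Omega \times [0,T] \times \Rbb^d \times \Rbb^{n \times d}, \Pfrak \otimes \Bcal^d \otimes \Bcal^{n \times d})$. Composing this with $f$, which is $\Pfrak \otimes \Bcal^d \otimes \Bcal^{n \times d}$-measurable by the standing assumption in Definition \ref{BSDE}, shows that $(\omega,t) \mapsto f(\omega,t,y_t(\omega),z_t(\omega))$ is $\Pfrak$-measurable, i.e. predictable.

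For the norm bound, the key step is the pointwise estimate coming from the triangle inequality together with the Lipschitz property of $f$: almost surely, for almost every $t$,
\begin{equation*}
|f(t,y_t,z_t)| \leq |f(t,0,0)| + |f(t,y_t,z_t) - f(t,0,0)| \leq |f(t,0,0)| + C\left( |y_t| + |z_t| \right).
\end{equation*}
Squaring and applying the elementary inequality $(a+b+c)^2 \leq 3(a^2+b^2+c^2)$ gives $|f(t,y_t,z_t)|^2 \leq 3|f(t,0,0)|^2 + 3C^2|y_t|^2 + 3C^2|z_t|^2$. Integrating over $[0,T]$ and taking expectations then yields
\begin{equation*}
\lVert f(\cdot,y,z) \rVert^2 \leq 3\lVert f(\cdot,0,0) \rVert^2 + 3C^2 \lVert y \rVert^2 + 3C^2 \lVert z \rVert^2,
\end{equation*}
and each term on the right is finite: the first because $f$ is standard, and the latter two because $(y,z) \in \Hcal^{2,d}_T \times \Hcal^{2,n \times d}_T$. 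Hence $f(\cdot,y,z) \in \Hcal^{2,d}_T$.

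I expect the only genuinely delicate point to be the predictability claim, which relies crucially on the joint measurability of $f$ in all its arguments that is built into the definition of a BSDE. The norm estimate itself is entirely routine once the correct elementary inequality is selected to decouple the three contributions.
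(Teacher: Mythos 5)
Your proof is correct and follows essentially the same route as the paper's: the triangle inequality combined with the Lipschitz property to get $|f(\cdot,y,z)| \leq |f(\cdot,0,0)| + C(|y|+|z|)$, followed by squaring and integrating. The only difference is that you spell out the elementary inequality $(a+b+c)^2 \leq 3(a^2+b^2+c^2)$ and the predictability of $(\omega,t) \mapsto f(\omega,t,y_t,z_t)$, both of which the paper leaves implicit.
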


\begin{proof}
Directly from Definition \ref{standard}, we have that $d\Pbb \otimes dt \ \a.s.$,
\begin{equation*}
|f(\cdot,y,z) - f(\cdot,0,0)| \leq C\left( |y| + |z| \right).
\end{equation*}
Hence
\begin{equation*}
|f(\cdot,y,z))| \leq |f(\cdot,0,0))| + C\left( |y| + |z| \right)
\end{equation*}
and the result follows by squaring and integrating.
\end{proof}

\subsection{Existence and uniqueness of solutions}
The purpose of this subsection is to prove the following theorem:

\begin{thm}[Pardoux--Peng (1990)]\label{ExistUniq}
If $(f,\xi)$ are standard data, then there exists a unique continuous square-integrable solution to BSDE \eqref{genBSDE}.
\end{thm}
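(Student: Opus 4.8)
The plan is to recast the problem as a fixed-point equation and apply the Banach fixed-point theorem on the Hilbert space $\Hcal^{2,d}_{T,\beta} \times \Hcal^{2,n \times d}_{T,\beta}$, exploiting the freedom to choose the parameter $\beta > 0$. The idea is to ``freeze'' the unknowns inside the driver: given an arbitrary pair $(y,z)$ in this space, I would define a new pair $(Y,Z) =: \Phi(y,z)$ as the solution of the BSDE whose driver no longer depends on the solution, namely $-dY_t = f(t, y_t, z_t)\,dt - Z_t^*\,dW_t$ with $Y_T = \xi$. A genuine solution of \eqref{genBSDE} is then precisely a fixed point of $\Phi$, so the whole theorem reduces to showing that $\Phi$ is a well-defined contraction for a suitable $\beta$.

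To construct $\Phi$, fix $(y,z)$ and set $g_s := f(s, y_s, z_s)$, which lies in $\Hcal^{2,d}_T$ by Proposition \ref{fisHcal2}. The random variable $\xi + \int_0^T g_s\,ds$ is square-integrable and $\Fcal_T$-measurable, so the martingale $M_t := \Ebb[\xi + \int_0^T g_s\,ds \mid \Fcal_t]$ is square-integrable and, by the martingale representation theorem, admits a representation $M_t = M_0 + \int_0^t Z_s^*\,dW_s$ for a unique $Z \in \Hcal^{2,n \times d}_T$. Setting $Y_t := M_t - \int_0^t g_s\,ds = \Ebb[\xi + \int_t^T g_s\,ds \mid \Fcal_t]$ yields a continuous adapted process satisfying the frozen BSDE; a routine estimate using Doob's inequality and the square-integrability of $g$ and $\xi$ shows $(Y,Z) \in \Hcal^{2,d}_T \times \Hcal^{2,n \times d}_T$, so $\Phi$ maps the space into itself.

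The crux is the contraction estimate, and this is where I expect the real work to lie. Taking two inputs $(y^1,z^1)$, $(y^2,z^2)$ with outputs $(Y^1,Z^1)$, $(Y^2,Z^2)$, I would write $\Delta y = y^1 - y^2$, $\Delta g_s = f(s,y^1_s,z^1_s) - f(s,y^2_s,z^2_s)$, and so on, and apply It\^o's formula to $e^{\beta s}|\Delta Y_s|^2$ between $0$ and $T$. Since $\Delta Y_T = 0$ and the quadratic variation contributes $\int_0^T e^{\beta s}|\Delta Z_s|^2\,ds$, taking expectations (after a localisation argument to kill the stochastic integral, justified by the square-integrability just established) gives the estimate
\begin{equation*}
\Ebb\!\left[\int_0^T e^{\beta s}\big(\beta|\Delta Y_s|^2 + |\Delta Z_s|^2\big)\,ds\right] \le \Ebb\!\left[\int_0^T 2 e^{\beta s}\,\Delta Y_s \cdot \Delta g_s\,ds\right],
\end{equation*}
the discarded term $-\Ebb|\Delta Y_0|^2$ only helping. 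The Lipschitz property bounds $|\Delta g_s| \le C(|\Delta y_s| + |\Delta z_s|)$, and Young's inequality in the form $2ab \le \tfrac{\beta}{2}a^2 + \tfrac{2}{\beta}b^2$ lets me absorb the $\Delta Y$ term on the right into the left-hand side. After cancellation this yields $\|\Delta Y\|_\beta^2 + \|\Delta Z\|_\beta^2 \le \frac{KC^2}{\beta}\big(\|\Delta y\|_\beta^2 + \|\Delta z\|_\beta^2\big)$ for an absolute constant $K$ (once $\beta$ exceeds a fixed threshold). Choosing $\beta$ large enough that $KC^2/\beta < 1$ makes $\Phi$ a strict contraction.

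Finally, since $\Hcal^{2,d}_{T,\beta} \times \Hcal^{2,n \times d}_{T,\beta}$ is complete (the Remark notes each factor is a Hilbert space, and the $\beta$-norm is equivalent to the original), the Banach fixed-point theorem furnishes a unique fixed point $(Y,Z)$, which is the desired unique continuous square-integrable solution. The two main technical points to watch are the justification that the stochastic integral in the It\^o computation is a true martingale rather than merely a local one, and the bookkeeping in the choice of constants so that the $\beta|\Delta Y|^2$ terms cancel cleanly; everything else is routine.
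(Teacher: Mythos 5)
Your proposal is correct and follows essentially the same route as the paper: freeze the driver, use the martingale representation theorem to define the map, and obtain a contraction on $\Hcal^{2,d}_{T,\beta} \times \Hcal^{2,n \times d}_{T,\beta}$ via It\^o's formula applied to $e^{\beta s}|\Delta Y_s|^2$, then invoke the Banach fixed-point theorem. The only cosmetic difference is that the paper packages the It\^o estimate as a standalone a priori estimates proposition (applied with ``Lipschitz constant'' zero) and completes the square there, whereas you inline the computation and absorb terms with Young's inequality; both yield a contraction constant of order $C^2/\beta$ and the same conclusion.
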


The method of proof is similar to other existence-uniqueness theorems in SDEs: we find a suitable complete space and a suitable mapping from that space into itself, and then invoke the contraction mapping theorem. To this end, recall the following result from the classical theory:

\begin{prop}[Burkholder--Davis--Gundy inequalities]\label{BDG}
For any $p > 0$ there exist positive constants $c_p,C_p$ such that, for all real-valued continuous local martingales $X$ with $X_0 = 0$ and stopping times $\tau$, the following inequality holds:
\begin{equation}
c_p\Ebb\left[ \langle X \rangle^{p/2}_\tau \right] \leq \Ebb\left[ (X^*_\tau)^p \right] \leq C_p\Ebb\left[ \langle X \rangle^{p/2}_\tau \right]
\end{equation}
where $X^*_t = \sup_{s \leq t}|X_s|$ is the maximum process of $X$.
\end{prop}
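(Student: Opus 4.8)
The statement is classical, so the plan is to follow the standard route: localize to reduce to a bounded setting, apply Itô's formula to suitable powers of $|X|$, and clean up using the classical Doob $L^p$ maximal inequality together with Hölder's inequality. First I would replace $X$ by the stopped process $X^{\sigma_n}$, where $\sigma_n = \inf\{t \ge 0 : |X_t| + \langle X \rangle_t \ge n\}$, so that $X$ and $\langle X\rangle$ are bounded; this guarantees that every expectation below is finite and that the stochastic integrals against $X$ are genuine martingales with vanishing expectation, rather than merely local martingales. Since $\sigma_n \uparrow \infty$, the general case is recovered at the end by monotone convergence. It is convenient to treat the two ranges $p \ge 2$ and $0 < p < 2$ separately, as the map $x \mapsto |x|^p$ is twice continuously differentiable only in the former range.

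For $p \ge 2$, writing $A = \langle X \rangle$, Itô's formula applied to $|X|^p = (X^2)^{p/2}$ together with the vanishing of the martingale term yields the identity
\[
\Ebb\left[|X_\tau|^p\right] = \tfrac{p(p-1)}{2}\,\Ebb\left[\int_0^\tau |X_s|^{p-2}\,dA_s\right].
\]
For the upper bound I would bound $|X_s|^{p-2} \le (X^*_\tau)^{p-2}$ inside the integral, apply Hölder with conjugate exponents $\tfrac{p}{p-2}$ and $\tfrac{p}{2}$ to extract a factor of $\Ebb[\langle X\rangle_\tau^{p/2}]^{2/p}$, use Doob to replace $\Ebb[|X_\tau|^p]$ by a multiple of $\Ebb[(X^*_\tau)^p]$, and then divide through by the common power of $\Ebb[(X^*_\tau)^p]$ — legitimate precisely because the localization has made it finite. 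For the matching lower bound I would instead apply Itô to $X_t^2\, A_t^{(p-2)/2}$; because $A$ has finite variation the second-order terms in $A$ drop out, and discarding one manifestly nonnegative term leaves $\tfrac{2}{p}\Ebb[\langle X\rangle_\tau^{p/2}] \le \Ebb[X_\tau^2\,\langle X\rangle_\tau^{(p-2)/2}]$, after which one more Hölder estimate and rearrangement delivers the constant $c_p$.

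The hard part is the range $0 < p < 2$, where $|x|^p$ fails to be $C^2$ at the origin and the negative powers of $A$ used above cease to be integrable, so the Itô computations collapse. Rather than attempting a direct attack I would route through a domination inequality of Lenglart type. The optional-stopping identity $\Ebb[\langle X\rangle_\sigma] = \Ebb[X_\sigma^2]$, valid for bounded stopping times $\sigma$ in the localized setting, shows that $X^2$ is dominated by the increasing process $\langle X\rangle$, while $\langle X \rangle$ is in turn dominated by the increasing process $(X^*)^2$; Lenglart's inequality converts each such domination into a comparison of $k$-th moments for every $k \in (0,1)$, with constants depending only on $k$, and taking $k = p/2$ yields the two desired inequalities at once. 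I expect the genuine crux to be establishing Lenglart's inequality itself — a weak-type estimate bootstrapped to all small moments via a stopping-time decomposition — rather than any of the Itô manipulations, which are routine once the localization is in place.
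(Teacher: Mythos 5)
The paper does not prove this proposition at all: it states the BDG inequalities as a classical result and defers the proof to Karatzas and Shreve (1991). So there is no in-paper argument to compare against, and the only question is whether your outline is sound on its own terms. It is: this is the standard textbook route. The localization $\sigma_n = \inf\{t : |X_t| + \langle X\rangle_t \ge n\}$ does make every quantity bounded and every relevant stochastic integral a true martingale, and monotone convergence does recover the general case since the constants are independent of $n$. For $p \ge 2$, the chain It\^o--Doob--H\"older (with conjugate exponents $\tfrac{p}{p-2}$ and $\tfrac{p}{2}$, dividing by the finite quantity $\Ebb[(X^*_\tau)^p]^{(p-2)/p}$) is correct for the upper bound, and your lower-bound computation is also right: applying the product rule to $X_t^2 \langle X\rangle_t^{(p-2)/2}$ and discarding $\tfrac{p-2}{2}\Ebb\bigl[\int_0^\tau X_s^2 \langle X\rangle_s^{(p-4)/2} d\langle X\rangle_s\bigr] \ge 0$ gives $\tfrac{2}{p}\Ebb[\langle X\rangle_\tau^{p/2}] \le \Ebb[X_\tau^2\langle X\rangle_\tau^{(p-2)/2}]$, after which H\"older closes the argument. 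For $0 < p < 2$, the two Lenglart dominations ($X^2$ by $\langle X\rangle$ via optional stopping, and $\langle X\rangle$ by $(X^*)^2$) yield both inequalities with constants depending only on $p$, since both dominating processes are continuous, hence predictable, so Lenglart's moment estimate for exponents $k \in (0,1)$ applies.

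Two points would need attention in a complete write-up. First, as you yourself flag, Lenglart's inequality is doing all the work in the hard range and is asserted rather than proven; a self-contained proof must include it (otherwise one is citing a classical inequality to prove a classical inequality, which is exactly what the paper does by pointing to Karatzas--Shreve). Second, for $2 < p < 4$ the map $x \mapsto x^{(p-2)/2}$ has unbounded derivative at $0$, so the chain rule applied to $\langle X\rangle^{(p-2)/2}$ needs a small regularization — e.g. work with $(\epsilon + \langle X\rangle)^{(p-2)/2}$ and let $\epsilon \downarrow 0$, or observe that the measure $d\langle X\rangle$ does not charge the set $\{\langle X\rangle = 0\}$. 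Neither point is a structural flaw; the skeleton is correct.
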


A proof of this result can be found in Karatzas and Shreve (1991). We prove a corollary that will be useful later:

\begin{cor}\label{unifmart}
Let $\phi \in \Hcal^{1,n}_T$, and define $M_t = \int_0^t \phi_s \cdot dW_s$ for $t \in [0,T]$. Then $(M_t)_{t \in [0,T]}$ is a uniformly integrable martingale.
\end{cor}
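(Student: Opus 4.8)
The plan is to exploit the $p=1$ case of the Burkholder--Davis--Gundy inequality (Proposition \ref{BDG}), which is tailored precisely to the $\Hcal^{1,n}_T$ hypothesis. First I would record that since $\phi \in \Hcal^{1,n}_T$ we have $\Ebb\left[\left(\int_0^T |\phi_s|^2\, ds\right)^{1/2}\right] < \infty$, so in particular $\int_0^T |\phi_s|^2\, ds < \infty$ $\Pbb$-a.s. This guarantees that the stochastic integral $M_t = \int_0^t \phi_s \cdot dW_s$ is well defined and is a continuous local martingale with $M_0 = 0$ and quadratic variation $\langle M \rangle_t = \int_0^t |\phi_s|^2\, ds$.

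The key step is then to apply Proposition \ref{BDG} with $p = 1$ and the (deterministic) stopping time $\tau = T$, which yields
\[
\Ebb\left[M^*_T\right] = \Ebb\left[\sup_{t \le T}|M_t|\right] \le C_1 \Ebb\left[\langle M \rangle_T^{1/2}\right] = C_1 \Ebb\left[\left(\int_0^T |\phi_s|^2\, ds\right)^{1/2}\right] < \infty,
\]
where the final finiteness is exactly the defining property of $\Hcal^{1,n}_T$. Hence the running maximum $M^*_T = \sup_{t \le T}|M_t|$ is integrable.

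Finally I would upgrade from local martingale to uniformly integrable martingale. Since $|M_t| \le M^*_T$ for every $t \in [0,T]$ and $M^*_T \in L^1$, the family $\{M_t : t \in [0,T]\}$ is dominated by a single integrable random variable, hence uniformly integrable. To confirm that $M$ is a genuine martingale rather than merely a local one, I would take a localizing sequence $\tau_k \uparrow T$ for which each stopped process $M^{\tau_k}$ is a true martingale; then for $s \le t$ we have $\Ebb[M_{t \wedge \tau_k}\mid\Fcal_s] = M_{s \wedge \tau_k}$, and because $|M_{t \wedge \tau_k}| \le M^*_T \in L^1$ the conditional dominated convergence theorem lets me pass to the limit $k \to \infty$ to obtain $\Ebb[M_t \mid \Fcal_s] = M_s$. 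Thus $M$ is a martingale whose paths are dominated by the integrable $M^*_T$, i.e. a uniformly integrable martingale.

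I do not anticipate a serious obstacle here; the only real subtlety is recognising that integrability of the running supremum (not merely of the terminal value $M_T$) is what delivers uniform integrability immediately, and this is exactly why the $p=1$ BDG inequality — matching the $\Hcal^{1}$ norm — is the correct tool. Care is otherwise needed only in justifying the local-to-true martingale passage through the dominating random variable $M^*_T$.
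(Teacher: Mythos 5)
Your proposal is correct and follows essentially the same route as the paper: apply the Burkholder--Davis--Gundy inequality with $p=1$ and $\tau = T$ to conclude that $\sup_{t \le T}|M_t|$ is integrable, then use domination by this random variable to obtain both the true-martingale property and uniform integrability. Your write-up merely makes explicit the localization and conditional dominated convergence argument that the paper compresses into the phrase ``using dominated convergence,'' which is a fair expansion rather than a different method.
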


\begin{proof}
$M$ is a real-valued continuous local martingale with $M_0 = 0$. By the Burkholder--Davis--Gundy inequalities with $p = 1$ and $\tau = T$,
\begin{align*}
\Ebb\left[ \sup_{s \leq T}|M_s| \right] &\leq C_1\Ebb\left[ \langle M \rangle^{1/2}_T \right]\\
 &= C_1\Ebb\left[ \left( \int_0^T |\phi_s|^2 ds \right) ^\frac{1}{2} \right]\\
 & < \infty.
\end{align*}
Thus the process $M$ is dominated by the integrable random variable $\sup_{s \leq T}|M_s|$. So, using dominated convergence, we find that it is a uniformly integrable martingale as required.
\end{proof}

In order to prove the theorem we need to find a mapping with a suitable Lipschitz constant. To do this we first need to make a few apriori estimates that control the size of the difference between two solutions. They may look nasty but their eventual application will be very straightforward. First, a lemma.

\begin{lem}\label{supLcal}
Suppose BSDE \eqref{genBSDE} has a square-integrable solution $(Y,Z)$ with standard data $(f,\xi)$. Then $\sup_{t\leq T}|Y_t| \in \Lcal^{2,1}_T$.
\end{lem}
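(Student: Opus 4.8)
The plan is to work directly from the integral form \eqref{BSDEint} and control $\sup_{t\le T}|Y_t|$ in the $\Lcal^{2,1}_T$-norm by splitting the right-hand side into three pieces: the terminal value $\xi$, the Lebesgue ($ds$) integral of the driver, and the stochastic integral. Writing $N_t := \int_0^t Z_s^* dW_s$, the stochastic integral in \eqref{BSDEint} is exactly $N_T - N_t$, so the triangle inequality gives, for every $t \le T$,
\begin{equation*}
|Y_t| \le |\xi| + \int_0^T |f(s,Y_s,Z_s)|\,ds + 2\sup_{t\le T}|N_t|.
\end{equation*}
The right-hand side no longer depends on $t$, so I can take the supremum on the left, then square and use $(a+b+c)^2 \le 3(a^2+b^2+c^2)$ before taking expectations. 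It then suffices to show each of the three resulting terms has finite expectation.

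The first two terms are straightforward. The terminal term is immediate since $\xi \in \Lcal^{2,d}_T$ by the standing assumption that the data are standard. For the driver term, since $(Y,Z) \in \Hcal^{2,d}_T \times \Hcal^{2,n\times d}_T$, Proposition \ref{fisHcal2} gives $f(\cdot,Y,Z) \in \Hcal^{2,d}_T$; then Cauchy--Schwarz in the form $\left(\int_0^T |f(s,Y_s,Z_s)|\,ds\right)^2 \le T \int_0^T |f(s,Y_s,Z_s)|^2\,ds$ lets me bound the expectation by $T\,\lVert f(\cdot,Y,Z)\rVert^2 < \infty$.

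The main obstacle is the martingale term $\Ebb\!\left[\sup_{t\le T}|N_t|^2\right]$, and this is where I would invoke the Burkholder--Davis--Gundy inequalities (Proposition \ref{BDG}) with $p=2$. Since $Z \in \Hcal^{2,n\times d}_T \subseteq \Hcal^{1,n\times d}_T$, each scalar component $N^i$ is a genuine (indeed uniformly integrable, by Corollary \ref{unifmart}) martingale rather than a mere local martingale, so the apriori application of BDG is justified. The bookkeeping to get right is the quadratic-variation identity: summing over components, $\sum_i \langle N^i\rangle_T = \int_0^T |Z_s|^2\,ds$ with $|\cdot|$ the Frobenius norm, because the coordinates of $W$ are independent. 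Using $\sup_{t\le T}|N_t|^2 \le \sum_i \sup_{t\le T}(N^i_t)^2$ and applying BDG componentwise then yields
\begin{equation*}
\Ebb\!\left[\sup_{t\le T}|N_t|^2\right] \le C_2 \sum_i \Ebb\!\left[\langle N^i\rangle_T\right] = C_2\,\Ebb\!\left[\int_0^T |Z_s|^2\,ds\right] = C_2\,\lVert Z\rVert^2 < \infty,
\end{equation*}
which is finite precisely because $Z \in \Hcal^{2,n\times d}_T$. Combining the three bounds gives $\Ebb\!\left[\sup_{t\le T}|Y_t|^2\right] < \infty$, i.e. $\sup_{t\le T}|Y_t| \in \Lcal^{2,1}_T$, as required.
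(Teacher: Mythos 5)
Your proof is correct and follows essentially the same route as the paper's: bound $\sup_{t\le T}|Y_t|$ by the three terms $|\xi|$, $\int_0^T|f(s,Y_s,Z_s)|\,ds$ (controlled via Proposition \ref{fisHcal2} and Cauchy--Schwarz), and the supremum of the stochastic integral (controlled via Burkholder--Davis--Gundy with $p=2$ and $Z\in\Hcal^{2,n\times d}_T$). The only differences are cosmetic: you reduce $\int_t^T Z_s^*dW_s$ to $2\sup_{t\le T}|N_t|$ pointwise and apply BDG componentwise, whereas the paper splits the square into the $\int_0^T$ and $\int_0^t$ pieces first; both are the same estimate.
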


\begin{proof}[Proof from El Karoui (1997)]
From equation \eqref{BSDEint}, we get
\begin{align*}
|Y_t| &= \left| \xi - \int_t^T f(s,Y_s,Z_s)ds - \int_t^T Z_s^*dW_s \right|\\
 &\leq |\xi| + \int_t^T |f(s,Y_s,Z_s)|ds + \left| \int_t^T Z_s^*dW_s \right|\\
 &\leq |\xi| + \int_0^T |f(s,Y_s,Z_s)|ds + \sup_{t\leq T} \left| \int_t^T Z_s^*dW_s \right|
\end{align*}
and hence
\begin{equation}\label{existuniqabs}
\sup_{t\leq T}|Y_t| \leq |\xi| + \displaystyle\int_0^T |f(s,Y_s,Z_s)|ds + \sup_{t\leq T} \left| \int_t^T Z_s^*dW_s \right|.
\end{equation}
By the Burkholder-Davis Gundy inequalities (Proposition \ref{BDG}):
\begin{align*}
\Ebb \left[ \sup_{t\leq T} \left| \int_t^T Z_s^*dW_s \right|^2 \right] &\leq 2 \Ebb \left[ \left| \int_0^T Z_s^*dW_s \right|^2 \right] + 2 \Ebb \left[ \sup_{t\leq T} \left| \int_0^t Z_s^*dW_s \right|^2 \right]\\
 &\leq 4 C_2 \Ebb \left[ \int_0^T |Z_s|^2ds \right]\\
 &< \infty
\end{align*}
since $Z$ is square-integrable. By Proposition \ref{fisHcal2}, $f(\cdot,Y,Z) \in \Hcal^{2,d}_T$. Hence every term on the right-hand side of equation \eqref{existuniqabs} is in $\Lcal^{2,1}_T$ and so $\sup_{t\leq T}|Y_t| \in \Lcal^{2,1}_T$.
\end{proof}

\begin{prop}[A priori estimates]\label{Apriori}
Let $((f^i,\xi^i);i=1,2)$ be the standard data of two BSDEs \eqref{genBSDE} and suppose that the BSDEs have square-integrable solutions $((Y^i,Z^i);i=1,2)$ respectively. Let $C$ be a Lipschitz constant for $f^1$, and set $\delta Y = Y^1 - Y^2$, $\delta Z = Z^1 - Z^2$ and $\delta _2f = f^1(\cdot,Y^2,Z^2) - f^2(\cdot,Y^2,Z^2)$. Then for any $\beta > C(2+C)$, the following inequalities hold:
\begin{subequations}\label{apriori}
\begin{align}
\lVert \delta Y \rVert _\beta^2 &\leq T \left( e^{\beta T}\Ebb \left[ |\delta Y_T|^2 \right] + \frac{1}{\beta-2C-C^2} \lVert \delta _2f \rVert_\beta^2 \right),\\
\lVert \delta Z \rVert _\beta^2 &\leq (2+2C^2T) \left( e^{\beta T}\Ebb \left[ |\delta Y_T|^2 \right] + \frac{1}{\beta-2C-C^2} \lVert \delta _2f \rVert_\beta^2 \right).
\end{align}
\end{subequations}
\end{prop}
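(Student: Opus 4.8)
The plan is to run the classical energy estimate: apply It\^o's formula to the process $e^{\beta t}|\delta Y_t|^2$, produce a single fundamental identity, and then extract the two bounds from it by two different applications of Young's inequality. First I would write down the BSDE solved by the difference. Subtracting the two instances of \eqref{BSDEint}, the pair $(\delta Y, \delta Z)$ satisfies
\begin{equation*}
\delta Y_t = \delta Y_T + \int_t^T \Delta f_s\, ds - \int_t^T (\delta Z_s)^* dW_s, \qquad \Delta f_s := f^1(s,Y^1_s,Z^1_s) - f^2(s,Y^2_s,Z^2_s).
\end{equation*}
Applying the multidimensional It\^o formula to $e^{\beta t}|\delta Y_t|^2$ (the quadratic variation of the martingale part contributes exactly the Frobenius-norm term $e^{\beta s}|\delta Z_s|^2\,ds$), integrating from $t$ to $T$ and taking expectations, I obtain
\begin{equation*}
\Ebb\!\left[ e^{\beta t}|\delta Y_t|^2 \right] + \Ebb\!\left[ \int_t^T e^{\beta s}\left( \beta|\delta Y_s|^2 + |\delta Z_s|^2 \right) ds \right] = e^{\beta T}\Ebb\!\left[ |\delta Y_T|^2 \right] + \Ebb\!\left[ \int_t^T 2e^{\beta s}\, \delta Y_s \cdot \Delta f_s\, ds \right].
\end{equation*}

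The step that needs genuine care is discarding the stochastic integral $\int_t^T 2e^{\beta s}\,\delta Y_s\cdot(\delta Z_s)^* dW_s$: a priori it is only a local martingale, so I must verify it is a true martingale with zero expectation. This is exactly the situation of Corollary \ref{unifmart}, because its integrand lies in $\Hcal^{1,n}_T$: one has $\sup_{s\le T}|\delta Y_s| \in \Lcal^{2,1}_T$ by Lemma \ref{supLcal} and $\delta Z \in \Hcal^{2,n\times d}_T$, so Cauchy--Schwarz bounds $\Ebb[(\int_0^T e^{2\beta s}|\delta Y_s|^2|\delta Z_s|^2\, ds)^{1/2}]$ by the product $\lVert\sup_{s\le T}|\delta Y_s|\rVert \cdot \lVert \delta Z\rVert$, which is finite. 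This is really the only nontrivial obstacle; the remainder is bookkeeping.

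The core is then estimating the cross term. Using the Lipschitz property of $f^1$ from Definition \ref{standard}, $|\Delta f_s| \le C(|\delta Y_s| + |\delta Z_s|) + |\delta_2 f_s|$, together with $2ab \le \epsilon^{-1}a^2 + \epsilon b^2$, I can dominate $2\,\delta Y_s\cdot\Delta f_s$ by a combination of $|\delta Y_s|^2$, $|\delta Z_s|^2$ and $|\delta_2 f_s|^2$ whose coefficients depend on the free parameters. For inequality (a) I would take Young parameter $\epsilon=1$ on the $\delta Z$ cross term and parameter $\beta-2C-C^2$ on the $\delta_2 f$ term, so that the $|\delta Z_s|^2$ integrals cancel between the two sides and the coefficient of the $|\delta Y_s|^2$ integral on the right becomes exactly $\beta$, cancelling the corresponding term on the left. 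This is precisely where the hypothesis $\beta > C(2+C)=2C+C^2$ enters, keeping the denominator positive. What survives is the pointwise-in-$t$ bound $\Ebb[e^{\beta t}|\delta Y_t|^2] \le e^{\beta T}\Ebb[|\delta Y_T|^2] + (\beta-2C-C^2)^{-1}\lVert\delta_2 f\rVert_\beta^2$, and integrating this over $t\in[0,T]$ supplies the factor $T$ and yields inequality (a).

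For inequality (b) I would instead set $t=0$ in the It\^o identity, discard the nonnegative terms $\Ebb[|\delta Y_0|^2]$ and $\beta\lVert\delta Y\rVert_\beta^2$, and split the $\delta Z$ cross term with $\epsilon=\tfrac12$, so that only $\tfrac12\lVert\delta Z\rVert_\beta^2$ is absorbed back to the left. Keeping the $\delta_2 f$ parameter at $\beta-2C-C^2$ as before leaves a residual coefficient of exactly $C^2$ on $\lVert\delta Y\rVert_\beta^2$, giving $\lVert\delta Z\rVert_\beta^2 \le 2e^{\beta T}\Ebb[|\delta Y_T|^2] + 2C^2\lVert\delta Y\rVert_\beta^2 + 2(\beta-2C-C^2)^{-1}\lVert\delta_2 f\rVert_\beta^2$. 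Substituting the bound on $\lVert\delta Y\rVert_\beta^2$ already obtained in (a) and collecting terms then produces the constant $2+2C^2T$. The slightly mysterious constants $T$ and $2+2C^2T$ thus emerge naturally, the former from the time-integration and the latter from the choice $\epsilon=\tfrac12$.
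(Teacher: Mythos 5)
Your proposal is correct and follows essentially the same route as the paper's proof: It\^o's formula applied to $e^{\beta t}|\delta Y_t|^2$, the zero-expectation justification for the stochastic integral via Lemma \ref{supLcal} and Corollary \ref{unifmart}, the Lipschitz bound on the cross term, inequality (a) by integrating the pointwise bound over $[0,T]$, and inequality (b) from the intermediate estimate $\lVert\delta Z\rVert_\beta^2 \le 2e^{\beta T}\Ebb\left[|\delta Y_T|^2\right] + 2C^2\lVert\delta Y\rVert_\beta^2 + 2(\beta-2C-C^2)^{-1}\lVert\delta_2 f\rVert_\beta^2$ combined with (a). The only cosmetic difference is that the paper absorbs the cross terms by completing the square into a single quadratic-form identity, retaining the non-negative terms $(|\delta Z_s| - C|\delta Y_s|)^2$ and $(|\delta Y_s| - \beta_C^{-1}|\delta_2 f_s|)^2$ explicitly, whereas you apply Young's inequality twice with parameters $\epsilon = 1$ and $\epsilon = \tfrac{1}{2}$ --- algebraically the identical estimates.
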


\begin{proof}[Proof from El Karoui (1997)]
From It\={o}'s formula applied from $s=t$ to $s=T$ to the semimartingale $e^{\beta s}|\delta Y_s|^2$, we get
\begin{equation}\label{ito1}
\begin{split}
e^{\beta t}|\delta Y_t|^2 &= e^{\beta T}|\delta Y_T|^2 + 2\displaystyle\int_t^T e^{\beta s} \delta Y_s \cdot \left( f^1(s,Y^1_s,Z^1_s) - f^2(s,Y^2_s,Z^2_s) \right) ds\\
&\quad -\beta \displaystyle\int_t^T e^{\beta s}|\delta Y_s|^2ds - \displaystyle\int_t^T e^{\beta s}|\delta Z_s|^2ds - 2\displaystyle\int_t^T e^{\beta s}\delta Y_s \cdot \delta Z^*_sdW_s.
\end{split}
\end{equation}
Now $(e^{\beta t}\delta Z_t \delta Y_t)_t \in \Hcal^{1,n}_T$ because using H\"{o}lder's inequality,
\begin{align*}
\Ebb \left[ \sqrt{\displaystyle\int_0^T |e^{\beta t}\delta Z_t \delta Y_t|^2 dt} \right] &\leq \Ebb \left[ \sup\limits_t |\delta Y_t| \sqrt{\displaystyle\int_0^T e^{2\beta t}|\delta Z_t|^2 dt} \right]\\
 &\le \Ebb \left[ \sup\limits_t |\delta Y_t|^2 \right]^\frac{1}{2} \Ebb \left[ \displaystyle\int_0^T e^{2\beta t}|\delta Z_t|^2 dt \right]^\frac{1}{2}\\
 &< \infty
\end{align*}
by Lemma \ref{supLcal} and square-integrability of $\delta Z$. Hence by Corollary \ref{unifmart}, the stochastic integral term in \eqref{ito1} is a uniformly integrable martingale and thus has zero expectation. Using the Lipschitz property of the driver $f^1$ we have that
\begin{align*}
|f^1(s,Y^1_s,Z^1_s) - f^2(s,Y^2_s,Z^2_s)| &\le |f^1(s,Y^1_s,Z^1_s) - f^1(s,Y^2_s,Z^2_s)| + |\delta _2 f_s|\\
 &\le C \left( |\delta Y_s| + |\delta Z_s| \right) + |\delta _2 f_s|.
\end{align*}
So by using the Lipschitz property in equation \eqref{ito1} and taking expectations we get
\begin{equation}\label{ito2}
\begin{split}
\Ebb \left[ e^{\beta t}|\delta Y_t|^2 \right] &\le \Ebb \left[ e^{\beta T}|\delta Y_T|^2 + 2\displaystyle\int_t^T e^{\beta s} |\delta Y_s|  \left( C \left( |\delta Y_s| + |\delta Z_s| \right) + |\delta _2 f_s| \right) ds \right]\\
&\quad + \Ebb \left[ -\beta \displaystyle\int_t^T e^{\beta s}|\delta Y_s|^2ds - \displaystyle\int_t^T e^{\beta s}|\delta Z_s|^2ds \right].
\end{split}
\end{equation}
We notice that a quadratic form appears on the right-hand side of \eqref{ito2}, given by
\begin{equation}
Q(y,z) = 2C|y|^2 + 2C|y||z| + 2|\delta _2 f_s||y| - \beta |y|^2 - |z|^2
\end{equation}
which we can rearrange as
\begin{equation}
Q(y,z) = -\beta _C (|y| - \beta_C^{-1}|\delta _2 f_s|)^2 - (|z| - C|y|)^2 + \beta _C^{-1}|\delta _2 f_s|^2
\end{equation}
where $\beta _C := \beta -2C-C^2 > 0$ by assumption. Hence \eqref{ito2} becomes
\begin{align*}
\Ebb \left[ e^{\beta t}|\delta Y_t|^2 \right] &+ \Ebb \left[ \beta_C \displaystyle\int_t^T e^{\beta s} (|\delta Y_s| - \beta_C^{-1}|\delta_2 f_s|)^2ds + \displaystyle\int_t^T e^{\beta s} (|\delta Z_s| - C|\delta Y_s|)^2ds \right]\\
& \le \Ebb \left[ e^{\beta T}|\delta Y_T|^2 \right] + \Ebb \left[ \displaystyle\int_t^T e^{\beta s} \frac{|\delta_2f_s|^2}{\beta_C}ds \right].
\end{align*}
The target inequality \eqref{apriori} for $\delta Y$ follows directly from the above inequality by integrating between $0$ and $T$. For $\delta Z$, notice $|\delta Z_s|^2 \le 2(|\delta Z_s| - C|\delta Y_s|)^2 + 2C^2|\delta Y_s|^2$ so
\begin{equation*}
\lVert \delta Z \rVert _\beta^2 \le 2\Ebb \left[ \displaystyle\int_0^T e^{\beta s} (|\delta Z_s| - C|\delta Y_s|)^2ds \right] + 2C^2\lVert \delta Y \rVert _\beta^2
\end{equation*}
and the result follows.
\end{proof}

The proof of the main theorem is an immediate consequence of the previous proposition.

\begin{proof}[Proof of Theorem \ref{ExistUniq}]
As may be evident from our previous work, the main task of this proof is to construct a contraction mapping on $\Hcal^{2,d}_{T,\beta} \times \Hcal^{2,n \times d}_{T,\beta}$, for some $\beta > 0$ to be specified later.

Fix $(y,z) \in \Hcal^{2,d}_T \times \Hcal^{2,n \times d}_T$ and let $M$ be the continuous version of the square-integrable martingale $\Ebb \left[ \int_0^T f(s,y_s,z_s)ds + \xi | \Fcal _t \right]$. By the martingale representation theorem (in Karatzas and Shreve) there exists a unique square-integrable process $Z \in \Hcal^{2,n \times d}_T$ for which $M_t = M_0 + \int_0^t Z_s^*dW_s$. Then define $Y$ by $Y_t = M_t - \int_0^t f(s,y_s,z_s)ds$, so that $Y \in \Hcal^{2,d}_T$ by Proposition \ref{fisHcal2}, and $Y$ is continuous. Notice that the pair $(Y,Z)$ thus defined satisfies the equation
\begin{align*}
 -dY_t &= f(t,y_t,z_t)dt - Z_t^*dW_t;\\
 Y_T &= \xi.
\end{align*}
Hence let $\Psi: \Hcal^{2,d}_T \times \Hcal^{2,n \times d}_T \to \Hcal^{2,d}_T \times \Hcal^{2,n \times d}_T$ be the map that sends $(y,z) \mapsto (Y,Z)$ as above.

Now let $(y^1,z^1)$, $(y^2,z^2)$ be two elements of $\Hcal^{2,d}_{T,\beta} \times \Hcal^{2,n \times d}_{T,\beta}$, and let $(Y^i,Z^i) = \Psi(y^i,z^i)$ for $i=1,2$. We seek to apply Proposition \ref{Apriori}. Note that there is some potential for confusion here: in the statement of the proposition we have to set $f^1 = f(s,y^1_s,z^1_s)$, which actually has no explicit dependence on $(Y^1,Z^1)$, since $(y^1,z^1)$ and $(Y^1,Z^1)$ are different processes. Hence we can set the "Lipschitz constant" in the proposition to be $0$. Also $\delta Y_T = \xi - \xi = 0$ and $\delta _2 f_t = f(t,y^1_t,z^1_t) - f(t,y^2_t,z^2_t)$ so we get the inequalities
\begin{align}
\lVert \delta Y \rVert _\beta^2 &\le \frac{T}{\beta} \Ebb \left[ \displaystyle\int_0^T e^{\beta s} |f(s,y^1_s,z^1_s) - f(s,y^2_s,z^2_s)|^2 ds \right],\\
\lVert \delta Z \rVert _\beta^2 &\le \frac{2}{\beta} \Ebb \left[ \displaystyle\int_0^T e^{\beta s} |f(s,y^1_s,z^1_s) - f(s,y^2_s,z^2_s)|^2 ds \right].
\end{align}
Now $f(\omega,t,y,z)$ is uniformly Lipschitz in $(y,z)$ with constant $C$ so
\begin{equation}\label{contraction}
\lVert \delta Y \rVert _\beta^2 + \lVert \delta Z \rVert _\beta^2 \le \frac{2(2+T)C^2}{\beta}\left( \lVert \delta y \rVert _\beta^2 + \lVert \delta z \rVert _\beta^2 \right)
\end{equation}
and therefore, if we pick $\beta$ such that $\beta > 2(2+T)C^2$, $\Psi$ is a contraction on the space $\Hcal^{2,d}_{T,\beta} \times \Hcal^{2,n \times d}_{T,\beta}$. Hence by the contraction mapping theorem there exists a unique fixed point of $\Psi$, which is the unique continuous square-integrable solution of the BSDE.
\end{proof}

\begin{rem}
In what sense exactly is our solution ``unique''? What we have proven above is this: The BSDE \eqref{genBSDE} with standard data has a continuous square-integrable solution, denoted by $(Y^1,Z^1)$, say. If $(Y^2,Z^2)$ is another square-integrable solution to the BSDE, then
\begin{equation}\label{solnnorm}
\lVert Y^1 - Y^2 \rVert ^2 \equiv \Ebb\left[ \int_0^T |Y^1_t - Y^2_t|^2 dt \right] = 0,
\end{equation}
which is equivalent to the statement $Y^1 = Y^2$ $d\Pbb \otimes dt$ a.s.. A similar statement holds for $Z^1$ and $Z^2$.

We can, however, do better than this if $Y^2$ is also continuous. Equation \eqref{solnnorm} is equivalent to
\begin{equation*}
\int_0^T |Y^1_t - Y^2_t|^2 dt = 0 \quad \Pbb \ \a.s.,
\end{equation*}
which means that there exists an almost sure set $\Omega_0 \subseteq \Omega$ on which
\begin{equation}\label{almostsureset}
\int_0^T |Y^1_t(\omega) - Y^2_t(\omega)|^2 dt = 0 \quad \forall \omega \in \Omega_0,
\end{equation}
and on which $|Y^1_t(\omega) - Y^2_t(\omega)|$ is continuous in $t$. Fix an $\omega \in \Omega_0$. The integrand in \eqref{almostsureset} above is non-negative and continuous in $t$, and its integral is zero. It therefore must be zero {\em everywhere}. From this we get the following stronger formulation of uniqueness for continuous solutions:
\end{rem}

\begin{cor}
Let $(Y^1,Z^1)$ and $(Y^2,Z^2)$ be continuous square-integrable solutions of the BSDE \eqref{genBSDE} with standard data. Then
\begin{equation}
Y^1_t = Y^2_t \quad \forall t \in [0,T] \ \Pbb \ \a.s..
\end{equation}
\end{cor}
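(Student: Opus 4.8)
The plan is to leverage the uniqueness already established in Theorem \ref{ExistUniq}, together with the continuity of both solutions. From that theorem and the discussion in the preceding remark, any two square-integrable solutions agree in the $\Hcal^{2,d}_T$ sense, that is,
\begin{equation*}
\Ebb\left[ \int_0^T |Y^1_t - Y^2_t|^2 dt \right] = 0.
\end{equation*}
The first step is to translate this into a pathwise statement: since the integrand is non-negative, the vanishing of its expectation forces $\int_0^T |Y^1_t - Y^2_t|^2 dt = 0$ for $\Pbb$-almost every $\omega$.

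The second step is to exploit continuity. Both $Y^1$ and $Y^2$ are continuous processes, so for $\Pbb$-almost every $\omega$ the map $t \mapsto |Y^1_t(\omega) - Y^2_t(\omega)|$ is continuous on $[0,T]$. Intersecting this almost sure set with the one produced in the first step yields a single almost sure set $\Omega_0$ on which both properties hold simultaneously.

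The final step is an elementary real-analysis observation: for each fixed $\omega \in \Omega_0$, the function $t \mapsto |Y^1_t(\omega) - Y^2_t(\omega)|^2$ is non-negative, continuous, and has integral zero over $[0,T]$, so it must vanish identically. Hence $Y^1_t(\omega) = Y^2_t(\omega)$ for all $t \in [0,T]$ and all $\omega \in \Omega_0$, which is precisely the claimed statement.

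Since the remark preceding the corollary has effectively carried out this reasoning, I do not anticipate a genuinely hard step. The only point demanding care is the bookkeeping of null sets: one must arrange for the integral-zero property and the path-continuity property to hold on a \emph{common} almost sure set before invoking the pointwise argument. Once that intersection is taken, the conclusion follows at once from the fact that a non-negative continuous function with zero integral is identically zero.
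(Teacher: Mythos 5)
Your proposal is correct and follows essentially the same route as the paper: the paper's preceding remark derives $\int_0^T |Y^1_t - Y^2_t|^2\,dt = 0$ $\Pbb$-a.s. from the $\Hcal^{2,d}_T$-uniqueness in Theorem \ref{ExistUniq}, intersects with the almost sure set of path continuity, and concludes via the same observation that a non-negative continuous function with zero integral vanishes identically. Your attention to taking a common null set is exactly the bookkeeping the paper performs with its set $\Omega_0$.
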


Equivalently we write $Y^1 = Y^2$ $\Pbb$ a.s. for the above statement. With the main proof complete, we now give an example illustrating that the condition that the solution be square-integrable is necessary for uniqueness to hold:

\begin{exmp}[From El Karoui (1997)]\label{Dudley}
From Dudley (1977) it is possible, for any finite time horizon $T>0$, to construct an $\Rbb ^d$-valued stochastic integral $I_t := I_0 + \int_0^t \psi_s^*dW_s$ such that $I_0=1$ the vector of ones, $I_T=0$ the vector of zeros, and $\int_0^T \lVert \psi_s \rVert ^2 ds < \infty$, $\Pbb$ a.s.. Note that we cannot have $\psi \in \Hcal^{2,n \times d}_T$ because this would imply that $I$ is a martingale, which it cannot be. Now let the BSDE \eqref{genBSDE} have standard data $(0,\xi)$ and let $(Y,Z)$ be its square-integrable solution, so that $Y_t = \Ebb \left[ \xi | \Fcal _t \right]$ and $Z$ is given by the martingale representation theorem. Then for any $\lambda \in \Rbb$, $(Y+\lambda I,Z +\lambda \psi)$ is also a solution of this BSDE.
\end{exmp}

Note that henceforth, when we refer to "the solution" of a BSDE with standard data, we mean its unique continuous square-integrable solution. The rest of this section will be devoted to proving a few select results about the behaviour of BSDEs.

\subsection{LBSDEs and the comparison theorem}
In this subsection we investigate a special class of BSDEs, from which we then prove a very important and powerful result about general BSDEs.

\subsubsection{Linear backward stochastic differential equations}
\begin{defn}
A {\em linear backward stochastic differential equation} (or {\em LBSDE} for short) is a BSDE \eqref{genBSDE} of the form
\begin{equation}\label{genLBSDE}
\begin{split}
 -dY_t &= (\phi_t + Y_t\beta_t + Z_t^*\gamma_t)dt - Z_t^*dW_t;\\
 Y_T &= \xi.
\end{split}
\end{equation}
where $\phi$, $\beta$ and $\gamma$ are progressively measurable $\Rbb ^d$-, $\Rbb$- and $\Rbb ^n$-valued processes respectively.
\end{defn}

LBSDEs are well-known in mathematical finance, as the classical problem of pricing a European contingency claim takes the form of an LBSDE, which we will deal with in the next section. For now, some notation: for a continuous semimartingale $M$, we write $\Ecal (M)$ for its {\em stochastic exponential}, i.e.
\begin{equation}
\Ecal (M) := e^{M - \frac{1}{2} \langle M \rangle}.
\end{equation}
We have that $d\Ecal (M)_t = \Ecal (M)_tdM_t$. Evidently if $M$ is a local martingale then $\Ecal (M)$ is also a local martingale. Recall also the following result from the classical theory:

\begin{prop}[Novikov's condition]
Suppose that $M$ is a continuous local martingale and that
\begin{equation}
\Ebb \left[ e^{\frac{1}{2} \langle M \rangle_\infty} \right] < \infty.
\end{equation}
Then $\Ecal (M)$ is a uniformly integrable martingale.
\end{prop}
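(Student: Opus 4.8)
The plan is to exploit that $\Ecal(M)$ is automatically a well-behaved object and that Novikov's hypothesis supplies exactly the extra integrability needed to rule out loss of mass. Normalising so that $\Ecal(M)_0 = 1$ (that is, $M_0 = 0$, without loss of generality), I would first record that since $d\Ecal(M)_t = \Ecal(M)_t\,dM_t$, the process $\Ecal(M)$ is a stochastic integral against the local martingale $M$ and is strictly positive, hence it is a nonnegative local martingale and therefore a supermartingale. Consequently $\Ebb[\Ecal(M)_t]\le 1$ for every $t$, the limit $\Ecal(M)_\infty := \lim_{t\to\infty}\Ecal(M)_t$ exists almost surely, and $\Ebb[\Ecal(M)_\infty]\le 1$ by Fatou. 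The whole problem thus reduces to the reverse inequality $\Ebb[\Ecal(M)_\infty]\ge 1$: a nonnegative supermartingale whose terminal expectation equals its initial value is automatically a uniformly integrable martingale, so establishing $\Ebb[\Ecal(M)_\infty]=1$ finishes the proof.

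To produce the missing mass I would introduce a scaling parameter $a\in(0,1)$ and use the elementary identity
\begin{equation*}
\Ecal(aM) = \Ecal(M)^a\,\exp\!\left(\tfrac{a(1-a)}{2}\langle M\rangle\right),
\end{equation*}
which is checked directly from $\Ecal(M)=e^{M-\frac12\langle M\rangle}$. The idea is that for $a<1$ the ``defect'' factor is an increasing function of $\langle M\rangle$, so Hölder's inequality, with conjugate exponents built from $a$, trades the local martingale $\Ecal(aM)$ against a genuine exponential moment of $\langle M\rangle$. Because $a<1$ one has $\exp(\tfrac a2\langle M\rangle_\tau)\le \exp(\tfrac12\langle M\rangle_\infty)$ for every stopping time $\tau$, and it is precisely here that the hypothesis $\Ebb[\exp(\tfrac12\langle M\rangle_\infty)]<\infty$ enters, yielding a bound uniform over $\tau$. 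Combining this with a localising sequence $\tau_n\uparrow\infty$ for which $\Ebb[\Ecal(aM)_{\tau_n}]=1$, I would conclude that for each fixed $a<1$ the supermartingale $\Ecal(aM)$ is in fact a uniformly integrable (indeed $L^p$-bounded for some $p>1$) martingale, so that $\Ebb[\Ecal(aM)_\infty]=1$.

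Finally I would let $a\uparrow 1$. Since $\langle M\rangle_\infty<\infty$ almost surely, a consequence of the exponential moment hypothesis, $M$ converges and $\Ecal(aM)_\infty\to\Ecal(M)_\infty$ almost surely, so if the family $\{\Ecal(aM)_\infty\}_{a}$ is uniformly integrable as $a\uparrow1$ then $\Ebb[\Ecal(M)_\infty]=\lim_{a\uparrow1}\Ebb[\Ecal(aM)_\infty]=1$, as required. The main obstacle is exactly this interchange of limits: a nonnegative supermartingale can lose mass in the limit, and both passages, through the localisation $\tau_n$ and through $a\uparrow1$, are legitimate only once uniform integrability is in hand. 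The delicate point is to secure a single exponent $p>1$ valid for all $a$ near $1$ so that $\sup_{a}\Ebb[\Ecal(aM)^p_\tau]$ remains bounded in terms of $\Ebb[\exp(\tfrac12\langle M\rangle_\infty)]$; this is the crux where Novikov's condition is genuinely consumed. It can alternatively be packaged through Kazamaki's criterion, namely uniform integrability of $\{\exp(\tfrac12 M_\tau)\}$ over bounded stopping times $\tau$, which Novikov's hypothesis implies by a Cauchy--Schwarz estimate of the same flavour.
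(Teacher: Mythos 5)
The paper itself does not prove this proposition (it cites Karatzas and Shreve (1991)), so there is no internal proof to compare against; your attempt reconstructs the classical argument from that reference. Most of it is sound: $\Ecal(M)$ is a positive local martingale, hence a supermartingale; a nonnegative supermartingale whose terminal expectation equals its initial value is indeed a closed, uniformly integrable martingale, so the problem correctly reduces to $\Ebb[\Ecal(M)_\infty] \ge 1$; the identity $\Ecal(aM) = \Ecal(M)^a \exp\bigl(\tfrac{a(1-a)}{2}\langle M\rangle\bigr)$ is right; and for each \emph{fixed} $a \in (0,1)$ the claim that $\Ecal(aM)$ is a uniformly integrable martingale with $\Ebb[\Ecal(aM)_\infty]=1$ is correct and fillable (insert an indicator $\1bb_A$ into the H\"older estimate to get uniform integrability of $\{\Ecal(aM)_\tau\}$ over stopping times, not merely an $L^1$ bound).

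The genuine gap is the final passage $a \uparrow 1$, which you yourself flag as ``the crux'': you propose to justify $\Ebb[\Ecal(M)_\infty]=\lim_{a\uparrow 1}\Ebb[\Ecal(aM)_\infty]$ by securing a single exponent $p>1$ valid for all $a$ near $1$, and this route cannot work. Under Novikov's condition alone, $\Ecal(M)$ need not be bounded in $L^p$ for any $p>1$, and your own estimate degenerates accordingly: writing $\Ecal(aM)^p = \Ecal(M)^{pa}\exp\bigl(\tfrac{pa(1-a)}{2}\langle M\rangle\bigr)$, H\"older controls this only for $p \le 1/(a(2-a))$, which tends to $1$ as $a \uparrow 1$. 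The standard way to close the argument avoids any interchange of limits: apply H\"older once more, directly at $t=\infty$. From $1 = \Ebb[\Ecal(aM)_\infty] = \Ebb\bigl[\Ecal(M)_\infty^a\, e^{\frac{a(1-a)}{2}\langle M\rangle_\infty}\bigr]$ and H\"older with exponents $1/a$ and $1/(1-a)$,
\begin{equation*}
1 \le \left( \Ebb\left[ \Ecal(M)_\infty \right] \right)^a \left( \Ebb\left[ e^{\frac{a}{2}\langle M\rangle_\infty} \right] \right)^{1-a} \le \left( \Ebb\left[ \Ecal(M)_\infty \right] \right)^a \left( \Ebb\left[ e^{\frac{1}{2}\langle M\rangle_\infty} \right] \right)^{1-a},
\end{equation*}
and since $\Ebb[e^{\frac{1}{2}\langle M\rangle_\infty}]$ is a fixed finite constant, letting $a \uparrow 1$ sends the last factor to $1$ and yields $\Ebb[\Ecal(M)_\infty] \ge 1$, which is exactly the reverse inequality you needed. (Your alternative of invoking Kazamaki's criterion is legitimate as a citation, but it only defers the issue: the proof of that criterion relies on the same $a \uparrow 1$ device.)
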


The proof can be found in Karatzas and Shreve (1991). Note that, due to our finite time horizon, we can replace the $\langle M \rangle_\infty$ in the condition by $\langle M \rangle_T$ whenever we want to use it.

\begin{defn}
Consider the LBSDE \eqref{genLBSDE} in the case that $\beta$ and $\gamma$ are bounded. The {\em adjoint process} of the LBSDE is then the $\Rbb$-valued positive process
\begin{equation}
\Gamma := \Ecal \left( \int \beta_s ds + \int \gamma_s \cdot dW_s \right),
\end{equation}
so that $\Gamma$ satisfies the FSDE
\begin{equation}
\begin{split}
d\Gamma_t &= \Gamma_t (\beta_t dt + \gamma_t \cdot dW_t);\\
\Gamma_0 &= 1.
\end{split}
\end{equation}
\end{defn}

\begin{lem}[Properties of the adjoint process]\label{Gammareg}
For $\beta$,$\gamma$ bounded and $\Gamma$ defined above,
\begin{enumerate}
 \item $\Gamma \in \Hcal^{2,1}_T$,
 \item $\sup_{t \le T} \Gamma_t \in \Lcal^{2,1}_T$.
\end{enumerate}
\end{lem}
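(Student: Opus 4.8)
The plan is to prove the stronger claim (2) first, since (1) then follows immediately from $\Ebb[\int_0^T \Gamma_t^2\,dt]\le T\,\Ebb[\sup_{t\le T}\Gamma_t^2]$. The starting point is the explicit form of $\Gamma$. Because the drift $\int\beta_s\,ds$ has finite variation it contributes nothing to the quadratic variation, so $\langle\int\beta\,ds+\int\gamma\cdot dW\rangle_t=\int_0^t|\gamma_s|^2\,ds$ and hence
\[
\Gamma_t=\exp\!\left(\int_0^t\beta_s\,ds\right)N_t,\qquad N_t:=\Ecal\!\left(\int\gamma\cdot dW\right)_t.
\]
Fixing a common bound $K$ for $\beta$ and $\gamma$, the drift factor satisfies $\exp(\int_0^t\beta_s\,ds)\le e^{KT}$, so $\sup_{t\le T}\Gamma_t\le e^{KT}\sup_{t\le T}N_t$ and it is enough to bound $\Ebb[\sup_{t\le T}N_t^2]$.

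Next I would observe that $N$ is the stochastic exponential of the local martingale $\int\gamma\cdot dW$, whose quadratic variation $\int_0^T|\gamma_s|^2\,ds\le K^2T$ is bounded; Novikov's condition therefore upgrades $N$ to a genuine (uniformly integrable) martingale. This legitimises an application of Doob's $L^2$ maximal inequality, $\Ebb[\sup_{t\le T}N_t^2]\le 4\,\Ebb[N_T^2]$, reducing the whole problem to the single terminal second moment $\Ebb[N_T^2]$.

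To estimate $\Ebb[N_T^2]$ the key manipulation is to recognise the square as a stochastic exponential times a deterministically bounded factor:
\[
N_T^2=\exp\!\left(2\int_0^T\gamma_s\cdot dW_s-\int_0^T|\gamma_s|^2\,ds\right)=\Ecal\!\left(\int 2\gamma\cdot dW\right)_T\exp\!\left(\int_0^T|\gamma_s|^2\,ds\right).
\]
Since $2\gamma$ is again bounded, Novikov makes $\Ecal(\int 2\gamma\cdot dW)$ a uniformly integrable martingale, so its expectation at $T$ equals its initial value $1$, while the residual factor is at most $e^{K^2T}$; thus $\Ebb[N_T^2]\le e^{K^2T}$ and assembling the estimates yields $\Ebb[\sup_{t\le T}\Gamma_t^2]\le 4e^{(2K+K^2)T}<\infty$. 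The only genuine obstacle is ensuring the relevant exponentials are true martingales rather than merely local ones, which is precisely where the boundedness hypotheses on $\beta$ and $\gamma$ enter through Novikov's condition; both this and Doob's inequality are classical results available from Karatzas and Shreve, so nothing beyond the excerpt is required.
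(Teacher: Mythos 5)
Your proof is correct, but it takes a genuinely different route from the paper's and reverses the logical order. The paper proves part 1 first and directly: it writes $\Gamma_t^2 = \exp\left\{\int_0^t(2\beta_s+|\gamma_s|^2)\,ds\right\}\Ecal\left(\int 2\gamma_s\cdot dW_s\right)_t$ --- the same ``doubled integrand'' trick you apply to $N_T^2$ --- and takes expectations using Novikov's condition, giving $\Ebb[\Gamma_t^2]\le e^{cT}$ for each $t$. For part 2 it then switches tools entirely: it uses the SDE representation $\Gamma_t = 1+\int_0^t\Gamma_s\beta_s\,ds+\int_0^t\Gamma_s\gamma_s\cdot dW_s$, notes that part 1 together with boundedness puts $\Gamma\beta$ and $\Gamma\gamma$ in $\Hcal^{2,1}_T$ and $\Hcal^{2,n}_T$, and reruns the Burkholder--Davis--Gundy argument of Lemma \ref{supLcal}. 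You instead factor $\Gamma_t = \exp\left(\int_0^t\beta_s\,ds\right)N_t$ with $N = \Ecal\left(\int\gamma\cdot dW\right)$, control $\Ebb\left[\sup_{t\le T}N_t^2\right]$ by Doob's $L^2$ maximal inequality (legitimately, since Novikov makes $N$ a true martingale rather than just a non-negative local martingale), and bound $\Ebb[N_T^2]$ by the doubled-exponential trick; part 1 then follows trivially from part 2 since $\Ebb\left[\int_0^T\Gamma_t^2\,dt\right]\le T\,\Ebb\left[\sup_{t\le T}\Gamma_t^2\right]$. Your route is shorter, avoids BDG and the SDE representation altogether, and yields an explicit constant $\Ebb\left[\sup_{t\le T}\Gamma_t^2\right]\le 4e^{(2K+K^2)T}$; what it exploits is the fact that $\Gamma$ is given as an explicit exponential. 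The paper's route is the one that generalises: the SDE-plus-BDG method of Lemma \ref{supLcal} works for any process solving a linear SDE with square-integrable coefficients, not only for processes with a closed-form exponential expression, which is presumably why the paper phrases part 2 as ``essentially identical to the method of proving Lemma \ref{supLcal}''. Both arguments are complete proofs of the lemma.
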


\begin{proof}
\begin{equation*}
\Gamma_t = \exp \left\{ \int_0^t (\beta_s - \frac{1}{2} |\gamma_s|^2)ds + \int_0^t \gamma_s \cdot dW_s \right\}
\end{equation*}
so
\begin{align*}
(\Gamma_t)^2 &= \exp \left\{ \int_0^t (2\beta_s - |\gamma_s|^2)ds + \int_0^t 2\gamma_s \cdot dW_s \right\}\\
 &= \exp \left\{ \int_0^t (2\beta_s + |\gamma_s|^2)ds \right\} \Ecal \left( \int 2\gamma_s \cdot dW_s \right)_t.
\end{align*}
By Novikov's condition and the boundedness of $\gamma$, $\Ecal \left( \int 2\gamma_s \cdot dW_s \right)$ is a uniformly integrable martingale. Let $c \ge 0$ be an upper bound for the bounded process $|2\beta + |\gamma|^2|$. Then
\begin{align*}
\Ebb \left[ (\Gamma_t)^2 \right] &\le \Ebb \left[ \exp \left\{ \int_0^T |2\beta_s + |\gamma_s|^2|ds \right\} \Ecal \left( \int 2\gamma_s \cdot dW_s \right)_t \right]\\
 &\le e^{cT} \Ebb \left[ \Ecal \left( \int 2\gamma_s \cdot dW_s \right)_t \right]\\
 &= e^{cT}.
\end{align*}
So $\lVert \Gamma \rVert ^2 \le Te^{cT}$. This proves the first part of the lemma.

Now $d\Gamma_t = \Gamma_t (\beta_t dt + \gamma_t \cdot dW_t)$ and $\Gamma_0 = 1$ so
\begin{equation*}
\Gamma_t = 1 + \int_0^t \Gamma_s \beta_s ds + \int_0^t \Gamma_s \gamma_s \cdot dW_s.
\end{equation*}
Observe that, since $\Gamma \in \Hcal^{2,1}_T$, we have that $\Gamma \beta \in \Hcal^{2,1}_T$ and $\Gamma \gamma \in \Hcal^{2,n}_T$ by boundedness. The proof of the second part of this lemma is essentially identical to the method of proving Lemma \ref{supLcal}.
\end{proof}

The next result shows that we can find an {\em explicit formula} for the solutions of LBSDEs that have adjoint processes. It is a slightly modified result of El Karoui (1997):

\begin{prop}\label{LBSDE}
Consider the LBSDE \eqref{genLBSDE} where $\beta$, $\gamma$ are bounded, $\phi \in \Hcal^{2,d}_T$ and $\xi \in \Lcal^{2,d}_T$. Let $\Gamma$ be the adjoint process of the LBSDE. Then the LBSDE has a unique continuous square-integrable solution $(Y,Z)$ where $Y$ is given explicitly by
\begin{equation}\label{LBSDEsoln}
Y_t = \frac{1}{\Gamma_t} \Ebb \left[ \xi \Gamma_T + \displaystyle\int_t^T \Gamma_s \phi_s ds | \Fcal _t \right].
\end{equation}
\end{prop}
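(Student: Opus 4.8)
The plan is to verify that the explicitly-given $Y_t$ in \eqref{LBSDEsoln}, together with an appropriate $Z$, satisfies the LBSDE \eqref{genLBSDE}, and then invoke the existence-uniqueness theorem (Theorem \ref{ExistUniq}) to conclude that this is *the* solution. The key observation driving the formula is that the adjoint process $\Gamma$ is precisely the integrating factor that turns the linear equation into a martingale condition: if $(Y,Z)$ solves the LBSDE, then the product $\Gamma_t Y_t + \int_0^t \Gamma_s \phi_s\,ds$ should be a martingale. So the natural strategy is to first establish this fact and then read off the formula by taking conditional expectations.

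Concretely, I would first apply It\^o's product rule to the process $\Gamma_t Y_t$, using $dY_t = -(\phi_t + Y_t\beta_t + Z_t^*\gamma_t)dt + Z_t^*dW_t$ and $d\Gamma_t = \Gamma_t(\beta_t\,dt + \gamma_t\cdot dW_t)$. The cross-variation term contributes $d\langle \Gamma, Y\rangle_t = \Gamma_t (Z_t^*\gamma_t)\,dt$ (from the two $dW_t$ pieces), and the plan is to check that the finite-variation (drift) terms cancel exactly: the $\Gamma_t Y_t \beta_t$ term from $Y\,d\Gamma$ cancels against the $-\Gamma_t Y_t\beta_t$ term from $\Gamma\,dY$, and the $\Gamma_t Z_t^*\gamma_t$ from the covariation cancels against the $-\Gamma_t Z_t^*\gamma_t$ from $\Gamma\,dY$. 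What survives is
\begin{equation*}
d(\Gamma_t Y_t) = -\Gamma_t \phi_t\,dt + \Gamma_t(Y_t\gamma_t + Z_t)^*\,dW_t,
\end{equation*}
so that $\Gamma_t Y_t + \int_0^t \Gamma_s\phi_s\,ds$ has zero drift and is a local martingale.

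The main obstacle is the integrability step: to pass from "local martingale" to "genuine martingale" so that I may legitimately take conditional expectations and write $\Gamma_t Y_t + \int_0^t \Gamma_s \phi_s\,ds = \Ebb[\Gamma_T Y_T + \int_0^T \Gamma_s\phi_s\,ds \mid \Fcal_t]$. For this I would use the regularity of $\Gamma$ established in Lemma \ref{Gammareg} — in particular $\sup_{t\le T}\Gamma_t \in \Lcal^{2,1}_T$ — together with Lemma \ref{supLcal} giving $\sup_{t\le T}|Y_t| \in \Lcal^{2,1}_T$, applied to the guaranteed solution $(Y,Z)$ whose existence comes from checking that the data here are standard (the drift $f(t,y,z) = \phi_t + y\beta_t + z^*\gamma_t$ is Lipschitz in $(y,z)$ with constant bounded in terms of the bounds on $\beta,\gamma$, and $f(\cdot,0,0) = \phi \in \Hcal^{2,d}_T$). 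A Cauchy--Schwarz argument should control the stochastic-integral term $\int_0^t \Gamma_s(Y_s\gamma_s + Z_s)^*dW_s$ well enough to place its integrand in $\Hcal^{1,n}_T$, so that Corollary \ref{unifmart} applies and yields the uniform-integrable-martingale property.

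Once the martingale identity is in hand, I rearrange using $Y_T = \xi$ to obtain
\begin{equation*}
\Gamma_t Y_t = \Ebb\left[\Gamma_T \xi + \int_t^T \Gamma_s\phi_s\,ds \;\middle|\; \Fcal_t\right],
\end{equation*}
and divide through by $\Gamma_t > 0$ (which is strictly positive $\Pbb$-a.s. since it is a stochastic exponential) to recover \eqref{LBSDEsoln}. Existence and uniqueness of the continuous square-integrable solution itself follows from Theorem \ref{ExistUniq} applied to these standard data, so the content of the proposition is really the \emph{identification} of that solution via the adjoint process, and the formula above completes it.
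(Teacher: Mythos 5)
Your proposal is correct and takes essentially the same route as the paper's own proof: both start from the unique continuous square-integrable solution $(Y,Z)$ supplied by Theorem \ref{ExistUniq} (after checking the data are standard), apply It\^o's formula to $M_t = \Gamma_t Y_t + \int_0^t \Gamma_s\phi_s\,ds$ to see the drift cancels, upgrade the resulting local martingale to a uniformly integrable martingale via Lemma \ref{supLcal}, Lemma \ref{Gammareg}, Cauchy--Schwarz/H\"older and Corollary \ref{unifmart}, and then take conditional expectations and divide by $\Gamma_t > 0$. Your opening sentence frames the plan as verifying the formula directly, but the argument you actually carry out is the identification argument of the paper, so there is no substantive difference.
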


\begin{proof}
Notice that with the conditions given, the LBSDE is a BSDE with standard data.  So by Theorem \ref{ExistUniq}, it has a unique continuous square-integrable solution $(Y,Z)$. Define
\begin{equation}
M_t = \Gamma_t Y_t + \displaystyle\int_0^t \Gamma_s \phi_s ds.
\end{equation}
By It\={o}'s formula,
\begin{align*}
dM_t &= \Gamma_t dY_t + Y_t d\Gamma_t + d\langle \Gamma, Y \rangle_t + \Gamma_t \phi_t dt\\
 &= \Gamma_t (-(\phi_t + Y_t\beta_t + Z_t^*\gamma_t)dt + Z_t^*dW_t) + Y_t \Gamma_t (\beta_t dt + \gamma_t \cdot dW_t)\\
 &\quad + \Gamma_t Z_t^*\gamma_t dt + \Gamma_t \phi_t dt.
\end{align*}
All the drift terms cancel and we are left with
\begin{equation}
M_t = Y_0 + \displaystyle\int_0^t \Gamma_s Y_s \gamma_s \cdot dW_s + \displaystyle\int_0^t \Gamma_s Z_s^* dW_s
\end{equation}
which is a local martingale.

Now by Lemmas \ref{supLcal} and \ref{Gammareg}, we have that $\sup_{t \le T} |Y_t| \in \Lcal^{2,1}_T,\  \sup_{t \le T} \Gamma_t \in \Lcal^{2,1}_T$. Hence $\sup_{t \le T} |Y_t| \times \sup_{t \le T} \Gamma_t$ is $\Pbb$-integrable by the Cauchy-Schwarz inequality. Since $\gamma$ is bounded, we therefore have that $\Gamma Y \gamma^* \in  \Hcal^{1,n \times d}_T$. Two applications of H\"{o}lder's inequality (see proof of Proposition \ref{Apriori}) also show us that $\Gamma Z \in \Hcal^{1,n \times d}_T$. Therefore by Corollary \ref{unifmart}, $M$ is a uniformly integrable martingale. In particular for $t \in [0,T]$ we have $M_t = \Ebb [M_T | \Fcal _t]$ from which, recalling the definition of $M$, we get
\begin{equation*}
\Gamma_t Y_t = \Ebb \left[ \xi \Gamma_T + \displaystyle\int_t^T \Gamma_s \phi_s ds | \Fcal _t \right].
\end{equation*}
\end{proof}

The following are immediate consequences of the above proposition:

\begin{cor}\label{LBSDEcor}
Let $d=1$.
\begin{enumerate}
 \item If $\xi$ is $\Pbb$ a.s. non-negative and $\phi$ is $d\Pbb \otimes dt$ a.s. non-negative, then $Y$ is $\Pbb$ a.s. non-negative.
 \item Suppose $\xi$ is $\Pbb$ a.s. non-negative and $\phi$ is $d\Pbb \otimes dt$ a.s. non-negative, and additionally $Y_\tau = 0$ $\Pbb$ a.s. for some constant $\tau \in [0,T]$.

Then $\xi = 0$ $\Pbb$ a.s., $\phi \1bb _{[\tau,T]} = 0$ $d\Pbb \otimes dt$ a.s. and $\forall t \in [\tau,T], \ Y_t = 0$ $\Pbb$ a.s..
\end{enumerate}
\end{cor}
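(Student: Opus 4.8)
The plan is to read both statements directly off the explicit representation \eqref{LBSDEsoln} furnished by Proposition \ref{LBSDE}, exploiting the single crucial structural fact that the adjoint process $\Gamma$ is strictly positive: it is defined as a stochastic exponential and is recorded in its defining Definition as an $\Rbb$-valued positive process, so $\Gamma_t > 0$ for all $t$, $\Pbb$ a.s. Since $d=1$ here, all of $Y$, $\xi$, $\phi$, $\Gamma$ are scalar-valued and ordinary inequalities make sense.

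For part 1, I would note that under $\xi \ge 0$ and $\phi \ge 0$ the random variable $\xi \Gamma_T + \int_t^T \Gamma_s \phi_s\, ds$ is a sum of products of non-negative terms, hence non-negative. Its $\Fcal_t$-conditional expectation is therefore non-negative, and dividing by the strictly positive $\Gamma_t$ in \eqref{LBSDEsoln} yields $Y_t \ge 0$ $\Pbb$ a.s. for each fixed $t$. To pass from "for each $t$" to "for all $t$ simultaneously" I would apply this along a countable dense set of times and invoke the continuity of $Y$, exactly as in the remark preceding the previous corollary.

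For part 2 the key manoeuvre is to evaluate \eqref{LBSDEsoln} at the constant time $\tau$. Because $\Gamma_\tau > 0$, the hypothesis $Y_\tau = 0$ forces $\Ebb\left[\xi \Gamma_T + \int_\tau^T \Gamma_s \phi_s\, ds \mid \Fcal_\tau\right] = 0$ $\Pbb$ a.s., and taking total expectations gives $\Ebb\left[\xi \Gamma_T + \int_\tau^T \Gamma_s \phi_s\, ds\right] = 0$. As this is the expectation of a non-negative random variable, the variable itself must vanish: $\xi \Gamma_T + \int_\tau^T \Gamma_s \phi_s\, ds = 0$ $\Pbb$ a.s. Since both summands are individually non-negative, each vanishes; from $\xi \Gamma_T = 0$ with $\Gamma_T > 0$ I get $\xi = 0$ a.s., and from $\int_\tau^T \Gamma_s \phi_s\, ds = 0$ with $\Gamma_s > 0$, $\phi_s \ge 0$ I get $\phi_s = 0$ for $d\Pbb \otimes dt$-almost every $(s,\omega)$ with $s \in [\tau,T]$, i.e. $\phi \1bb_{[\tau,T]} = 0$ $d\Pbb \otimes dt$ a.s. Substituting $\xi = 0$ and $\phi \1bb_{[\tau,T]} = 0$ back into \eqref{LBSDEsoln} for any $t \in [\tau,T]$ kills the conditional expectation, giving $Y_t = 0$ a.s., again upgraded to all $t \in [\tau,T]$ by continuity.

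The only genuinely delicate points are the two vanishing arguments at the end: that a non-negative random variable with zero mean is a.s. zero (elementary), and that a $dt$-integral of a non-negative integrand vanishing forces the integrand to vanish $d\Pbb \otimes dt$ a.s. (a short Tonelli/Fubini argument). Everything else is bookkeeping powered by the strict positivity of $\Gamma$ and the continuity of $Y$.
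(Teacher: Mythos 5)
Your proposal is correct and takes exactly the route the paper intends: the paper offers no separate proof, stating the corollary as an ``immediate consequence'' of Proposition \ref{LBSDE}, i.e.\ reading both parts off the explicit formula \eqref{LBSDEsoln} using the strict positivity of $\Gamma$, non-negativity of conditional expectations, and continuity of $Y$, just as you do. Your handling of the delicate points (zero-mean non-negative random variables vanish a.s., and the Tonelli argument forcing $\phi \1bb_{[\tau,T]} = 0$ $d\Pbb \otimes dt$ a.s.) correctly fills in the details the paper leaves implicit.
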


\begin{rem}\label{Xzeta}
With $d=1$, it turns out that for {\em any} (not necessarily square-integrable) continuous solution $(X,\zeta)$ of an LBSDE satisfying the conditions of Proposition \ref{LBSDE}, the first item of Corollary \ref{LBSDEcor} will hold merely if there exists a positive $B \in \Lcal^{2,1}_T$ such that $X_t \ge -B \ \forall t \in [0,T]$ $\Pbb$ a.s.. We can see this in the following way: with $\xi$, $\phi$ non-negative, define $M_t = \Gamma_t X_t + \int_0^t \Gamma_s \phi_s ds$ as before. This is still a local martingale, and is bounded from below by the $\Pbb$-integrable (using Lemma \ref{Gammareg} and Cauchy-Schwarz inequality) random variable $-B\sup_{t \le T}\Gamma_t$. Hence $M$ is a supermartingale, and $M_t \ge \Ebb [M_T | \Fcal _t]$ so
\begin{equation*}
\Gamma_t X_t \ge \Ebb \left[ \xi \Gamma_T + \displaystyle\int_t^T \Gamma_s \phi_s ds | \Fcal _t \right] \ge 0.
\end{equation*}
\end{rem}

\begin{defn}
Sometimes it is convenient to define a family of adjoint processes $(\Gamma^s : s \in [0,T])$, where for $t \in [s,T]$,
\begin{equation*}
\Gamma^s_t := \Ecal \left( \int_s^\cdot \beta_u du + \int_s^\cdot \gamma_u \cdot dW_u \right)_t.
\end{equation*}
This means that $d\Gamma^s_t = \Gamma^s_t (\beta_t dt + \gamma_t \cdot dW_t);\quad \Gamma^s_s = 1$.
\end{defn}

\begin{rem}
Notice that the original definition of the adjoint process $\Gamma$ coincides with $\Gamma^0$. Also notice that for any $0 \le s \le t \le T$,
\begin{equation*}
\frac{\Gamma_t}{\Gamma_s} = \Gamma^s_t.
\end{equation*}
This means that the explicit formula for $Y$ in Proposition \ref{LBSDE} can be written as
\begin{equation}
Y_t = \Ebb \left[ \xi \Gamma^t_T + \displaystyle\int_t^T \Gamma^t_s \phi_s ds | \Fcal _t \right].
\end{equation}
\end{rem}

\subsubsection{The comparison theorem}
This is just a corollary of Proposition \ref{LBSDE}, but is, somewhat miraculously, a result that extends to all BSDEs with standard data. It can be interpreted as an analogue of the maximum principle in PDE theory.

\begin{thm}[Peng (1992)]\label{comparison}
Let $d=1$. Let $(f^1,\xi^1)$, $(f^2,\xi^2)$ be two standard data of BSDEs \eqref{genBSDE} with associated continuous square-integrable solutions $(Y^1,Z^1)$, $(Y^2,Z^2)$. Suppose that the following hold:
\begin{enumerate}
 \item $\xi^1 \ge \xi^2 \quad \Pbb$ a.s.,
 \item $\delta_2 f := f^1(\cdot,Y^2,Z^2) - f^2(\cdot,Y^2,Z^2) \ge 0 \quad d\Pbb \otimes dt$ a.s..
\end{enumerate}
Then $Y^1 \ge Y^2 \quad \Pbb$ a.s..

Moreover, this comparison is strict, i.e. on the event $Y^1_t = Y^2_t$, we have $\xi^1 = \xi^2$, $\delta_2 f_s = 0$ and $Y^1_s = Y^2_s$ a.s. for all $s \ge t$.
\end{thm}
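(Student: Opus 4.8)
\section*{Proof proposal}

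The plan is to reduce everything to the linear theory already in hand: I would show that the difference process solves an LBSDE with non-negative data, and then read off both the comparison and its strict form from Proposition \ref{LBSDE} and Corollary \ref{LBSDEcor}. Write $\delta Y := Y^1 - Y^2$ and $\delta Z := Z^1 - Z^2$. Subtracting the two integral equations \eqref{BSDEint} shows that $(\delta Y, \delta Z)$ satisfies
\[
-d(\delta Y_t) = \left[ f^1(t,Y^1_t,Z^1_t) - f^2(t,Y^2_t,Z^2_t) \right] dt - \delta Z_t^* dW_t, \qquad \delta Y_T = \xi^1 - \xi^2.
\]
The first move is to insert and remove $f^1(t,Y^2_t,Z^2_t)$: the piece $f^1(t,Y^2_t,Z^2_t) - f^2(t,Y^2_t,Z^2_t)$ is exactly $\delta_2 f_t$, while the remaining piece $f^1(t,Y^1_t,Z^1_t) - f^1(t,Y^2_t,Z^2_t)$ must be linearised in $(\delta Y,\delta Z)$.

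Carrying out this linearisation is where the genuine work lies. I would define a scalar process by the difference quotient $\beta_t := \bigl[f^1(t,Y^1_t,Z^1_t) - f^1(t,Y^2_t,Z^1_t)\bigr]/\delta Y_t$ on $\{\delta Y_t \neq 0\}$ and $\beta_t := 0$ otherwise; the Lipschitz bound on $f^1$ gives $|\beta_t| \le C$. For the $z$-argument, since $d=1$ the matrix variable lives in $\Rbb^n$, so I would build an $\Rbb^n$-valued process $\gamma$ by a coordinate-by-coordinate telescoping: changing one coordinate at a time from $Z^2_t$ to $Z^1_t$ and forming the corresponding scalar difference quotient, each again bounded by $C$ by the Lipschitz property. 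This produces $f^1(t,Y^2_t,Z^1_t) - f^1(t,Y^2_t,Z^2_t) = \gamma_t \cdot \delta Z_t$ with $\gamma$ bounded and progressively measurable (measurability inherited from $f^1$, $Y^i$, $Z^i$). Hence $(\delta Y,\delta Z)$ solves the LBSDE \eqref{genLBSDE} with coefficients $\beta,\gamma$, source $\phi = \delta_2 f$, and terminal value $\xi^1 - \xi^2$.

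Next I would verify the hypotheses of Proposition \ref{LBSDE}: $\beta,\gamma$ are bounded by construction; $\phi = \delta_2 f \in \Hcal^{2,1}_T$ because both $f^1(\cdot,Y^2,Z^2)$ and $f^2(\cdot,Y^2,Z^2)$ lie in $\Hcal^{2,1}_T$ by Proposition \ref{fisHcal2}; and $\xi^1 - \xi^2 \in \Lcal^{2,1}_T$. As a difference of continuous square-integrable solutions, $(\delta Y,\delta Z)$ is itself continuous and square-integrable, so the uniqueness in Proposition \ref{LBSDE} identifies it with the explicit representation \eqref{LBSDEsoln}. The assumptions $\xi^1 \ge \xi^2$ and $\delta_2 f \ge 0$ state exactly that the terminal value and source of this LBSDE are non-negative, so Corollary \ref{LBSDEcor}(1) gives $\delta Y_t \ge 0$ for each $t$, and continuity upgrades this to $Y^1_t \ge Y^2_t$ for all $t \in [0,T]$, $\Pbb$-a.s.

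Finally, the strict comparison is precisely Corollary \ref{LBSDEcor}(2) applied to this same LBSDE: fixing a deterministic time $t$ at which $Y^1_t = Y^2_t$ holds $\Pbb$-a.s. (i.e.\ $\delta Y_t = 0$) and taking $\tau = t$, that corollary forces the terminal value $\xi^1 - \xi^2$ and the source $\delta_2 f \1bb_{[t,T]}$ to vanish and $\delta Y_s = 0$ for all $s \ge t$, which translates back to $\xi^1 = \xi^2$, $\delta_2 f_s = 0$ on $[t,T]$, and $Y^1_s = Y^2_s$ for $s \ge t$. I expect the main obstacle to be the $z$-linearisation: constructing the bounded, progressively measurable $\Rbb^n$-valued $\gamma$ honestly, since the telescoping must simultaneously respect measurability and keep the Euclidean bound so that $\gamma$ genuinely qualifies as bounded coefficient data for an LBSDE, rather than merely appealing to ``Lipschitz implies linear''.
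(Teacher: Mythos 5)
Your proposal is correct and follows essentially the same route as the paper's own proof: linearise the difference $f^1(t,Y^1_t,Z^1_t)-f^1(t,Y^2_t,Z^2_t)$ via difference quotients to exhibit $(\delta Y,\delta Z)$ as the solution of an LBSDE with source $\delta_2 f$ and terminal value $\xi^1-\xi^2$, then invoke Proposition \ref{LBSDE} and Corollary \ref{LBSDEcor}. The only difference is that the paper carries out the argument for $n=1$ and merely remarks that ``additional care'' is needed for $n>1$, whereas your coordinate-by-coordinate telescoping construction of the bounded predictable $\gamma$ supplies exactly that missing detail.
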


\begin{proof}
For simplicity we assume $n=1$. As in Proposition \ref{Apriori}, let $\delta Y = Y^1 - Y^2$ and $\delta Z = Z^1 - Z^2$. We have that
\begin{equation*}
-d\delta Y_t = \left( f^1(t,Y^1_t,Z^1_t) - f^2(t,Y^2_t,Z^2_t) \right)dt - \delta Z_t dW_t,
\end{equation*}
so if we let
\begin{align*}
\Delta_y f^1(t) &=
\begin{cases}
 \frac{f^1(t,Y^1_t,Z^1_t) - f^1(t,Y^2_t,Z^1_t)}{Y^1_t - Y^2_t} &\quad \text{if } Y^1_t - Y^2_t \ne 0,\\
 0 &\quad \text{otherwise},
\end{cases}\\
\Delta_z f^1(t) &=
\begin{cases}
 \frac{f^1(t,Y^2_t,Z^1_t) - f^1(t,Y^2_t,Z^2_t)}{Z^1_t - Z^2_t} &\quad \text{if } Z^1_t - Z^2_t \ne 0,\\
 0 &\quad \text{otherwise},
\end{cases}
\end{align*}
Then $(\delta Y,\delta Z)$ is the solution of the LBSDE
\begin{equation}
\begin{split}
-d\delta Y_t &= \left( \Delta_y f^1(t) \delta Y_t + \Delta_z f^1(t) \delta Z_t + \delta_2 f_t \right)dt - \delta Z_t dW_t,\\
 \delta Y_T &= \xi^1 - \xi^2.
\end{split}
\end{equation}
Observe that, since the driver $f^1$ is uniformly Lipschitz, $\Delta_y f^1$ and $\Delta_z f^1$ are bounded processes. This LBSDE therefore satisfies the conditions of Proposition \ref{LBSDE}. The theorem then follows from Corollary \ref{LBSDEcor}.

The argument for $n > 1$ is similar, but some additional care has to be taken in defining $\Delta_z f^1(t)$.
\end{proof}

If we set $(f^2,\xi^2)$ to zero, we get a sufficient condition for non-negativity:
\begin{cor}
Let $(f,\xi)$ be the standard data of a BSDE \eqref{genBSDE} with continuous square-integrable solution $(Y,Z)$, and suppose that $\xi \ge 0$ $\Pbb$ a.s. and $f(\cdot,0,0) \ge 0$ $d\Pbb \otimes dt$ a.s.. Then $Y \ge 0$ $\Pbb$ a.s..
\end{cor}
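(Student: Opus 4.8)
The plan is to obtain this as the special case of the comparison theorem (Theorem \ref{comparison}) in which the second BSDE is taken to be trivial. Concretely, I would set $(f^2,\xi^2) = (0,0)$ and $(f^1,\xi^1) = (f,\xi)$, working in the scalar setting $d=1$ as required by Theorem \ref{comparison}, and then simply check that its hypotheses are met.

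First I would confirm that $(0,0)$ are standard data, so that the comparison theorem genuinely applies: the zero driver satisfies $f^2(\cdot,0,0) = 0 \in \Hcal^{2,d}_T$ and is uniformly Lipschitz with constant $0$, and $\xi^2 = 0 \in \Lcal^{2,d}_T$. The corresponding BSDE is $-dY^2_t = -(Z^2_t)^* dW_t$ with $Y^2_T = 0$, whose unique continuous square-integrable solution is the zero pair $(Y^2,Z^2) = (0,0)$; indeed $Y^2_t = \Ebb[0 \,|\, \Fcal_t] = 0$, and the martingale representation step used in the proof of Theorem \ref{ExistUniq} then forces $Z^2 = 0$.

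Next I would verify the two comparison conditions. Condition (1), $\xi^1 \ge \xi^2$, is precisely the assumption $\xi \ge 0$ $\Pbb$ a.s.. For condition (2), since $(Y^2,Z^2) = (0,0)$ and $f^2 \equiv 0$, the quantity $\delta_2 f = f^1(\cdot,Y^2,Z^2) - f^2(\cdot,Y^2,Z^2)$ reduces to $f(\cdot,0,0)$, which is non-negative $d\Pbb \otimes dt$ a.s. by hypothesis. Theorem \ref{comparison} then yields $Y = Y^1 \ge Y^2 = 0$ $\Pbb$ a.s., which is exactly the claim.

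Since the result is a direct specialization, I do not anticipate any genuine obstacle; the only points requiring care are checking that the trivial data are standard (so the theorem is applicable) and confirming that the zero pair really is \emph{the} continuous square-integrable solution of the trivial BSDE. Everything else is immediate from Theorem \ref{comparison}.
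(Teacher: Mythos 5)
Your proof is correct and is exactly the paper's intended argument: the corollary is stated immediately after Theorem \ref{comparison} as the specialization $(f^2,\xi^2)=(0,0)$, whose unique continuous square-integrable solution is the zero pair, so that condition (2) reduces to $f(\cdot,0,0)\ge 0$. Your additional checks that $(0,0)$ are standard data and that the zero pair is indeed \emph{the} solution are the right points of care and match what the paper leaves implicit.
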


\begin{rem}\label{BSDEbdedbelow}
By Corollary \ref{LBSDEcor} and Remark \ref{Xzeta} we may relax the assumptions of square-integrability of solutions in the comparison theorem, and merely assume that there exists a positive $B \in \Lcal^{2,1}_T$ such that $Y^1_t - Y^2_t \ge -B$ for all $t \in [0,T]$, $\Pbb$ a.s.. We still find that $Y^1 \ge Y^2$ $\Pbb$ a.s..
\end{rem}

Related to this, we get the following extremely useful Corollary:

\begin{cor}\label{solncompare}
Let $d=1$, and let $(Y^1,Z^1)$ be any continuous solution of a BSDE \eqref{genBSDE} with standard data $(f,\xi)$. Suppose there exists a positive $B \in \Lcal^{2,1}_T$ such that $Y^1_t \ge -B \ \forall t \in [0,T]$ $\Pbb$ a.s.. Let $(Y^2,Z^2)$ be the unique continuous square-integrable solution of the BSDE. Then $Y^1 \ge Y^2$ $\Pbb$ a.s..
\end{cor}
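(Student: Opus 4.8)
The plan is to reduce the statement to the relaxed comparison theorem of Remark \ref{BSDEbdedbelow}, exploiting the fact that both pairs solve the \emph{same} BSDE. Set $\delta Y = Y^1 - Y^2$ and $\delta Z = Z^1 - Z^2$. Since $(Y^1,Z^1)$ and $(Y^2,Z^2)$ share the data $(f,\xi)$, the terminal value of the difference vanishes, $\delta Y_T = \xi - \xi = 0$, and the inhomogeneity from the comparison setup is identically zero: taking $f^1 = f^2 = f$ gives $\delta_2 f = f(\cdot,Y^2,Z^2) - f(\cdot,Y^2,Z^2) = 0$. Thus hypotheses (1) and (2) of Theorem \ref{comparison} hold with equality, and the only thing standing between us and the conclusion $\delta Y \ge 0$ is that $(Y^1,Z^1)$ is not assumed square-integrable, so Theorem \ref{comparison} itself does not apply verbatim.

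The key step is therefore to verify the one-sided $\Lcal^{2,1}_T$ bound required by Remark \ref{BSDEbdedbelow}. First I would apply Lemma \ref{supLcal} to the square-integrable solution $(Y^2,Z^2)$ to obtain $\sup_{t \le T}|Y^2_t| \in \Lcal^{2,1}_T$. Combining this with the hypothesis $Y^1_t \ge -B$, where $B \in \Lcal^{2,1}_T$ is positive, yields
\[
\delta Y_t = Y^1_t - Y^2_t \ge -B - \sup_{s \le T}|Y^2_s| =: -B'
\quad \forall t \in [0,T],\ \Pbb\text{-a.s.},
\]
and $B' = B + \sup_{s\le T}|Y^2_s|$ is a positive element of $\Lcal^{2,1}_T$, being a sum of two such random variables. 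This is precisely the lower-bound hypothesis of Remark \ref{BSDEbdedbelow}, so applying that remark with $(f^1,\xi^1) = (f^2,\xi^2) = (f,\xi)$ gives $Y^1 \ge Y^2$ $\Pbb$-a.s., as required.

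If I wanted a self-contained argument rather than citing Remark \ref{BSDEbdedbelow}, I would unfold its mechanism directly. Linearising the driver exactly as in the proof of Theorem \ref{comparison} (via the difference quotients $\Delta_y f, \Delta_z f$, which are bounded by the Lipschitz constant of $f$) shows that $(\delta Y, \delta Z)$ solves a linear BSDE with bounded coefficients, zero source term, and zero terminal value. Let $\Gamma$ be its adjoint process and set $M_t = \Gamma_t \delta Y_t$. The It\={o} computation of Proposition \ref{LBSDE} shows the drift cancels, so $M$ is a local martingale; and by Lemma \ref{Gammareg} together with the Cauchy--Schwarz inequality, $M$ is bounded below by the $\Pbb$-integrable random variable $-B'\sup_{t\le T}\Gamma_t$. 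Hence $M$ is a supermartingale, so $\Gamma_t \delta Y_t = M_t \ge \Ebb[M_T \mid \Fcal_t] = \Ebb[\Gamma_T \delta Y_T \mid \Fcal_t] = 0$, and positivity of $\Gamma$ gives $\delta Y \ge 0$. This is exactly the reasoning of Remark \ref{Xzeta}, specialised to zero source and zero terminal data.

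The main obstacle is the loss of square-integrability of $(Y^1,Z^1)$: without it one cannot invoke either Theorem \ref{comparison} or the clean martingale argument of Proposition \ref{LBSDE}, since $M$ need not be a \emph{uniformly integrable} martingale. The resolution, and the genuinely load-bearing observation, is that the two-sided control on the square-integrable solution $Y^2$ furnished by Lemma \ref{supLcal} combines with the one-sided control on $Y^1$ to produce a one-sided $\Lcal^{2,1}_T$ bound on $\delta Y$; this downgrades the martingale identity to a supermartingale inequality, which still points in the direction needed for the conclusion.
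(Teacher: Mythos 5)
Your proposal is correct and follows essentially the same route as the paper: both observe that the two pairs solve the same BSDE (so the comparison hypotheses hold trivially), invoke Lemma \ref{supLcal} to get $\sup_{t\le T}|Y^2_t| \in \Lcal^{2,1}_T$, combine this with the hypothesis $Y^1_t \ge -B$ to obtain the one-sided $\Lcal^{2,1}_T$ bound $Y^1_t - Y^2_t \ge -(B + \sup_{s\le T}|Y^2_s|)$, and conclude via Remark \ref{BSDEbdedbelow}. Your optional third paragraph merely unfolds the supermartingale mechanism already contained in Remarks \ref{Xzeta} and \ref{BSDEbdedbelow}, so it adds exposition but no new argument.
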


\begin{proof}
The two solutions solve the same BSDE, so they satisfy conditions 1 and 2 of the comparison theorem. Additionally using Lemma \ref{supLcal},
\begin{equation*}
Y^1_t-Y^2_t \ge -\left( B+\sup_{t \le T}|Y^2_t| \right) \in \Lcal^{2,1}_T
\end{equation*}
for all $t \in [0,T]$, $\Pbb$ a.s.. The result follows from the previous remark.
\end{proof}

\subsection{Supersolutions}
In this subsection we assume $d=1$.

\begin{defn}
A continuous {\em supersolution} of BSDE \eqref{genBSDE} is a triple $(Y,Z,C) = (Y_t,Z_t,C_t)_{t \in [0,T]}$ of processes such that:
\begin{itemize}
 \item $Y$ is a continuous adapted $\Rbb$-valued process which is bounded below, i.e. there exists positive $B \in \Lcal^{2,1}_T$ with $Y_t \ge -B \ \forall t \in [0,T]$ $\Pbb$ a.s.,
 \item $Z$ is a predictable $\Rbb^{n}$-valued process with $\int_0^T |Z_t|^2ds < \infty$ $\ $ $\Pbb$ a.s.,
 \item $C$ is an increasing continuous adapted $\Rbb$-valued process with $C_0 = 0$,
\end{itemize}
and which satisfies
\begin{equation}
\begin{split}
 -dY_t &= f(t,Y_t,Z_t)dt - Z_t^*dW_t + dC_t;\\
 Y_T &= \xi.
\end{split}
\end{equation}
\end{defn}
Supersolutions are a common concept in mathematical finance, arising naturally in models that incorporate a notion of consumption (hence why the third process is denoted $C$). This link with finance will be covered in more detail in the next section.

\begin{rem}\label{linsupersol}
Suppose $f$ is a linear driver, as in equation \eqref{genLBSDE}, and suppose that $\beta$,$\gamma$ are bounded and that $\phi \in \Hcal^{2,1}_T$. Let $\Gamma$ be the adjoint process of the LBSDE, and let $(Y,Z,C)$ be a supersolution. Defining $M_t = \Gamma_t Y_t + \int_0^t \Gamma_s \phi_s ds$ as in the proof of Proposition \ref{LBSDE}, we have by It\={o}'s formula
\begin{equation*}
dM_t = \Gamma_t Y_t \gamma_t \cdot dW_t + \Gamma_t Z_t \cdot dW_t - \Gamma_t dC_t,
\end{equation*}
so $M$ is a local supermartingale. This leads to the next proposition.
\end{rem}

\begin{prop}\label{supersolncomp}
Let $d=1$. Let the BSDE \eqref{genBSDE} have standard data $(f,\xi)$ and a continuous supersolution $(Y^1,Z^1,C)$, and let $(Y^2,Z^2)$ be its unique continuous square-integrable solution. Then $Y^1 \ge Y^2$ $\Pbb$ a.s..
\end{prop}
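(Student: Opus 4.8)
The plan is to mimic the proof of the comparison theorem (Theorem \ref{comparison}): linearise the difference between the supersolution and the genuine solution to obtain a \emph{linear} supersolution with zero terminal condition, and then exploit the supermartingale structure already identified in Remark \ref{linsupersol}. Note that we cannot simply invoke Corollary \ref{solncompare}, since the extra increasing term $dC$ means $(Y^1,Z^1)$ is not itself a solution of the BSDE.

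First I would set $\delta Y = Y^1 - Y^2$ and $\delta Z = Z^1 - Z^2$. Subtracting the equation for the solution from that for the supersolution gives
\begin{equation*}
-d\delta Y_t = \bigl( f(t,Y^1_t,Z^1_t) - f(t,Y^2_t,Z^2_t) \bigr)dt - \delta Z_t^* dW_t + dC_t, \qquad \delta Y_T = 0.
\end{equation*}
Since both triples are driven by the \emph{same} $f$, there is no $\delta_2 f$ term. Exactly as in the comparison theorem I would introduce bounded incremental-ratio processes $\Delta_y f(t)$, $\Delta_z f(t)$ (bounded by the uniform Lipschitz property of $f$) so that $f(t,Y^1_t,Z^1_t) - f(t,Y^2_t,Z^2_t) = \Delta_y f(t)\,\delta Y_t + \Delta_z f(t)\cdot \delta Z_t$. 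Thus $(\delta Y, \delta Z, C)$ is a continuous supersolution of a \emph{linear} BSDE with $\phi \equiv 0$, bounded coefficients $\beta = \Delta_y f$, $\gamma = \Delta_z f$, and terminal value $0$.

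Next, letting $\Gamma$ be the adjoint process of this LBSDE and setting $M_t = \Gamma_t\,\delta Y_t$ (the $\int \Gamma_s\phi_s\,ds$ term vanishes since $\phi \equiv 0$), Remark \ref{linsupersol} shows that $M$ is a local supermartingale. The crux is to upgrade this to a genuine supermartingale, for which I would bound $\delta Y$ below: the supersolution provides a positive $B \in \Lcal^{2,1}_T$ with $Y^1_t \ge -B$, while Lemma \ref{supLcal} gives $\sup_{s \le T}|Y^2_s| \in \Lcal^{2,1}_T$, so $\delta Y_t \ge -\bigl( B + \sup_{s \le T}|Y^2_s| \bigr)$ with the right-hand side in $\Lcal^{2,1}_T$. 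Since $\Gamma > 0$ and $\sup_{s \le T}\Gamma_s \in \Lcal^{2,1}_T$ by Lemma \ref{Gammareg}, the Cauchy--Schwarz inequality shows that $M_t \ge -\sup_{s \le T}\Gamma_s \cdot \bigl( B + \sup_{s \le T}|Y^2_s| \bigr)$ is dominated below by a single $\Pbb$-integrable random variable.

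A local supermartingale bounded below by an integrable random variable is a true supermartingale (via the conditional Fatou argument, precisely the local-to-genuine upgrade used in Remark \ref{Xzeta}). Hence $M_t \ge \Ebb[M_T \mid \Fcal_t] = \Ebb[\Gamma_T\,\delta Y_T \mid \Fcal_t] = 0$, and since $\Gamma_t > 0$ this forces $\delta Y_t \ge 0$, i.e. $Y^1_t \ge Y^2_t$ $\Pbb$ a.s. for every $t$. The main obstacle I anticipate is exactly this final upgrade step: assembling the lower bound on $M$ from the two distinct integrability sources (Lemmas \ref{supLcal} and \ref{Gammareg}) and then justifying the passage from local to genuine supermartingale. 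Everything preceding it is the routine linearisation already carried out in the comparison theorem.
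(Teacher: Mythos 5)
Your proposal is correct and follows essentially the same route as the paper's own proof: the identical linearisation via the incremental-ratio processes $\Delta_y f$, $\Delta_z f$, the observation from Remark \ref{linsupersol} that $\Gamma\,\delta Y$ is a local supermartingale, the lower bound $-\sup_{t \le T}\Gamma_t\bigl(B + \sup_{t \le T}|Y^2_t|\bigr)$ assembled from Lemmas \ref{supLcal} and \ref{Gammareg} with Cauchy--Schwarz, and the Fatou upgrade to a genuine supermartingale yielding $\Gamma_t\,\delta Y_t \ge \Ebb[\Gamma_T\,\delta Y_T \mid \Fcal_t] = 0$. The step you flagged as the main obstacle is handled in the paper exactly as you describe.
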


\begin{proof}
The proof is analogous to the comparison theorem, so we use the same notation, and assume $n=1$ for simplicity. We have $f^1 = f^2 = f$, so we let $\delta Y$, $\delta Z$, $\Delta_y f(t)$ and $\Delta_z f(t)$ be defined as in the proof of the comparison theorem, so that $(\delta Y,\delta Z,C)$ is a continuous {\em super}solution of a certain LBSDE. Namely, it satisfies
\begin{equation}
\begin{split}
-d\delta Y_t &= \left( \Delta_y f(t) \delta Y_t + \Delta_z f(t) \delta Z_t \right)dt - \delta Z_t dW_t + dC_t,\\
 \delta Y_T &= 0.
\end{split}
\end{equation}
As before, $\Delta_y f(t)$ and $\Delta_z f(t)$ are bounded by the Lipschitz property. Let $\Gamma$ be the adjoint process of the above LBSDE, then we know from Remark \ref{linsupersol} that $\Gamma \delta Y$ is a local supermartingale. However, $\Gamma \delta Y$ is bounded from below by $-\sup_{t \le T}\Gamma_t \left( B + \sup_{t \le T}|Y^2_t| \right)$ which is integrable by the Cauchy-Schwarz inequality, so by Fatou's inequality it is in fact a supermartingale, and in particular for $t \in [0,T]$,
\begin{equation*}
\Gamma_t \delta Y_t \ge \Ebb [ \Gamma_T \delta Y_T | \Fcal _t ] = 0,
\end{equation*}
so $\delta Y \ge 0$ $d\Pbb \otimes dt$ a.s.. The result follows from continuity.
\end{proof}
\newpage

\section{European claims in dynamically complete markets}\label{Eurodyncomp}

In this section we see how BSDEs can help in the solution of one of the most fundamental problems in mathematical finance: the pricing of a European contingent claim.

\subsection{Basic definitions}\label{compdefn}
We keep our probability space $(\Omega,\Fcal,\Pbb)$ from the previous section with fixed time horizon $T > 0$, and define a few new processes:
\begin{itemize}
 \item The {\em short rate} $r = (r_t)_{t \in [0,T]}$, which is $\Rbb$-valued, predictable and bounded,
 \item The vector of {\em stock appreciation rates} $b = (b_t)_{t \in [0,T]}$, which is $\Rbb^n$-valued, predictable and bounded,
 \item The {\em volatility matrix} $\sigma = (\sigma_t)_{t \in [0,T]}$, which is $\Rbb^{n \times n}$-valued, predictable and bounded, and such that $\sigma_t$ is invertible $\Pbb$ a.s. $\forall t \in [0,T]$ with bounded inverse.
\end{itemize}

With these processes defined, our $(n+1)$-asset market is given by the {\em price process} $P = (P^0_t,P^1_t,\ldots,P^n_t)^*_{t \in [0,T]}$ where
\begin{equation}
dP^0_t = P^0_tr_tdt
\end{equation}
is our single {\em locally riskless} asset, representing a bank account or a bond, and 
\begin{equation}
dP^i_t = P^i_t \left( b^i_tdt + \sum_{j=1}^n \sigma^{i,j}_tdW^j_t \right)
\end{equation}
for $i = 1,\ldots,n$ are our {\em risky securities}, for example representing stocks. In addition, we define a predictable and bounded $\Rbb^n$-valued process $\theta = (\theta_t)_{t \in [0,T]}$ known as a {\em risk premium} that satisfies
\begin{equation*}
b_t - r_t \mathbf{1} = \sigma_t \theta_t \quad d\Pbb \otimes dt \ \a.s.
\end{equation*}
where $\mathbf{1} = (1,\ldots,1)^* \in \Rbb ^n$.

Suppose we have an investor whose actions have no affect on the market. We let $V_t$ be his total wealth and $\pi_t = (\pi^1_t,\ldots,\pi^n_t)^*$ the value of his holdings in the $i$th risky asset at time $t \in [0,T]$, so that $\pi^0 := V - \sum_{i=1}^n\pi^i$ is the value of his holdings in the riskless asset. Since he can only decide what to do at time $t$ based on the current information available, we require that the processes $\pi$ and $V$ be predictable.

In the Merton model (1971), the investor also has a {\em consumption rate} $c = (c_t)_{t \in [0,T]}$ which is a scalar non-negative predictable process and represents him putting aside a portion of his wealth, not to be invested further. However in this case we instead usually talk about the {\em total consumption} given by the continuous increasing predictable process $C = \int c_sds$, whose absolute continuity we then relax (so $c$ needn't exist at all).

\begin{defn}
A {\em self-financing trading strategy} is a pair of processes $(V,\pi)$ such that
\begin{equation}\label{selffin}
dV_t = r_tV_tdt + \pi_t \cdot \sigma_t(dW_t + \theta_tdt),
\end{equation}
\begin{equation*}
\int_0^T |\sigma_t^*\pi_t|^2dt < \infty \quad \Pbb \ \a.s..
\end{equation*}
\end{defn}

This SDE is known as the {\em wealth equation}, and is equivalent to
\begin{equation}
V_t = V_0 +  \sum_{i=0}^n\int_0^t \pi^i_t \frac{dP^i_t}{P^i_t},
\end{equation}
and its correct interpretation is that all of the investor's wealth is always invested in some combination of the $n+1$ assets, and that he does not gain or lose wealth in any other manner. We extend these definitions to the Merton model in the following way:

\begin{defn}
A self-financing {\em superstrategy} is a triple of processes $(V,\pi,C)$ such that
\begin{equation}\label{superstrateqn}
dV_t = r_tV_tdt - dC_t + \pi_t \cdot \sigma_t(dW_t + \theta_tdt),
\end{equation}
\begin{equation*}
\int_0^T |\sigma_t^*\pi_t|^2dt < \infty \quad \Pbb \ \a.s..
\end{equation*}
where $C$ is increasing, right-continuous and adapted with $C_0 = 0$. Observe that if $(V,\pi)$ is a self-financing trading strategy, then $(V,\pi,0)$ is a self-financing superstrategy.
\end{defn}

\begin{rem}
Notice that the above two definitions coincide with the definitions of solution and supersolution of a BSDE respectively.
\end{rem}

\begin{defn}
A trading strategy $(V,\pi)$ or superstrategy $(V,\pi,C)$ is {\em admissible} if $V$ is $\Pbb$ a.s. bounded from below. It is {\em 0-admissible} or {\em feasible} if $V \ge 0$ $\Pbb$ a.s..
\end{defn}

\subsection{Hedging claims}
\begin{defn}
A {\em European contingent claim} settled at time $T$ is an $\Fcal _T$-measurable random variable, usually denoted $\xi$.
\end{defn}
Note that this is identical to the definition of $\xi$ in the previous section. Examples of contingent claims include call options and futures contracts.

The questions we would like to answer are as follows: \textbf{Given a non-negative European contingent claim $\xi$, can we find an feasible self-financing strategy $(V,\pi)$ or superstrategy $(V,\pi,C)$ such that we can guarantee that our wealth at time $T$ is $V_T = \xi$? If so, what is the smallest initial wealth $V_0$ needed to carry out such a strategy?} With these questions in mind, we make the following definitions:

\begin{defn}
A {\em hedging strategy} against a European contingent claim $\xi$ is a feasible self-financing trading strategy $(V,\pi)$ such that $V_T = \xi$. Let $\Hfrak (\xi)$ denote the class of all hedging strategies against $\xi$. We call $\xi$ {\em hedgeable} if $\Hfrak (\xi) \ne \emptyset$.

For feasible self-financing superstrategies $(V,\pi,C)$ we similarly define {\em superhedging strategy} against $\xi$, $\Hfrak '(\xi)$ and {\em superhedgeable}.
\end{defn}

\begin{defn}
The {\em fair price} of a hedgeable European contingent claim $\xi$ is
\begin{equation*}
\pfrak (\xi):=\inf \left\{ x \ge 0 : \exists (V,\pi) \in \Hfrak (\xi)\text{ s.t. }V_0 = x \ \a.s. \right\}
\end{equation*}
the lowest initial wealth of a hedging strategy against $\xi$. Similarly we define the {\em upper price} of a superhedgeable European contingent claim $\xi$:
\begin{equation*}
\pfrak '(\xi):=\inf \left\{ x \ge 0 : \exists (V,\pi,C) \in \Hfrak '(\xi)\text{ s.t. }V_0 = x \ \a.s. \right\}.
\end{equation*}
\end{defn}

We can now state the main theorem of this section, which we prove using our results on BSDEs. It shows that all non-negative square-integrable claims are hedgeable, and in this case we call the market {\em dynamically complete}.

\begin{thm}\label{HedgeClaim}
Let $\xi \in \Lcal ^{2,1}_T$ be a non-negative square-integrable European contingent claim. Then there exists some hedging strategy $(X,\pi) \in \Hfrak (\xi)$ achieving the fair price of $\xi$, i.e. $X_0 = \pfrak (\xi)$. Moreover, the upper price $\pfrak '(\xi)$ is equal to the fair price.
\end{thm}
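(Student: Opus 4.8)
The plan is to recognise the wealth equation \eqref{selffin} as a linear BSDE and then read everything off from the results of Section \ref{BSDEs}. Setting $Z_t := \sigma_t^* \pi_t$, the identities $\pi_t \cdot \sigma_t dW_t = Z_t^* dW_t$ and $\pi_t \cdot \sigma_t \theta_t = \theta_t \cdot Z_t$ turn \eqref{selffin} together with the terminal value $V_T = \xi$ into the LBSDE \eqref{genLBSDE}
\begin{equation*}
-dV_t = \left( -r_t V_t - \theta_t \cdot Z_t \right)dt - Z_t^* dW_t, \qquad V_T = \xi,
\end{equation*}
that is, with $\phi \equiv 0$, $\beta = -r$ and $\gamma = -\theta$. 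Since $r$ and $\theta$ are bounded and $\xi \in \Lcal^{2,1}_T$, this is an LBSDE with standard data satisfying the hypotheses of Proposition \ref{LBSDE}, so it has a unique continuous square-integrable solution $(X,Z)$, and writing $\Gamma$ for its adjoint process we get $\Gamma_t X_t = \Ebb[\xi \Gamma_T \mid \Fcal_t]$; in particular $X_0 = \Ebb[\xi \Gamma_T]$ is a.s. constant. Because $\sigma$ is invertible with bounded inverse, I would then define the predictable process $\pi_t := (\sigma_t^*)^{-1} Z_t$, which recovers $Z_t = \sigma_t^* \pi_t$ and satisfies $\int_0^T |\sigma_t^* \pi_t|^2 dt = \int_0^T |Z_t|^2 dt < \infty$. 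Finally, since $\xi \ge 0$ and $\phi \equiv 0 \ge 0$, Corollary \ref{LBSDEcor} gives $X \ge 0$, so $(X,\pi)$ is feasible and $(X,\pi) \in \Hfrak(\xi)$.

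It then remains to show that $X_0$ is minimal. Let $(V,\pi')$ be any hedging strategy against $\xi$ with $V_0 = x$ a.s., and put $Z'_t := \sigma_t^* \pi'_t$, so that $(V,Z')$ is a continuous (not necessarily square-integrable) solution of the same BSDE. Feasibility gives $V_t \ge 0 \ge -B$ with the constant $B \equiv 1 \in \Lcal^{2,1}_T$, so Corollary \ref{solncompare} yields $V \ge X$ $\Pbb$ a.s., whence $x = V_0 \ge X_0$. Taking the infimum over $\Hfrak(\xi)$ gives $\pfrak(\xi) \ge X_0$, while $(X,\pi) \in \Hfrak(\xi)$ gives $\pfrak(\xi) \le X_0$; hence $\pfrak(\xi) = X_0$ and the fair price is attained.

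For the equality of the two prices, the inclusion of hedging strategies into superhedging strategies via $(V,\pi) \mapsto (V,\pi,0)$ shows $\pfrak'(\xi) \le \pfrak(\xi)$. Conversely, let $(V,\pi',C)$ be any feasible superhedging strategy with $V_0 = x$; then $(V,\sigma^* \pi',C)$ is a continuous supersolution of the BSDE, bounded below again by $V \ge 0$, so Proposition \ref{supersolncomp} gives $V \ge X$ $\Pbb$ a.s. and therefore $x \ge X_0 = \pfrak(\xi)$. Taking the infimum over $\Hfrak'(\xi)$ gives $\pfrak'(\xi) \ge \pfrak(\xi)$, so the two prices coincide.

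I expect the main obstacle to be not any single computation but the correct translation: recognising that the feasibility hypothesis $V \ge 0$ is exactly the ``bounded below'' assumption needed to invoke the comparison and supersolution results (Corollary \ref{solncompare} and Proposition \ref{supersolncomp}) for competing strategies that are a priori only continuous and not square-integrable. Once the wealth equation is cast as a standard LBSDE and $\pi$ is recovered through the bounded inverse of $\sigma$, the optimality of $X_0$ and the equality $\pfrak'(\xi) = \pfrak(\xi)$ follow directly from the machinery already developed.
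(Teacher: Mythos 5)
Your proposal is correct and follows essentially the same route as the paper: cast the wealth equation with terminal value $\xi$ as the LBSDE with $\phi \equiv 0$, $\beta = -r$, $\gamma = -\theta$, obtain the candidate $(X,\pi)$ from Proposition \ref{LBSDE} and the bounded invertibility of $\sigma$, get non-negativity from the explicit formula (equivalently Corollary \ref{LBSDEcor}), and then use Corollary \ref{solncompare} for competing hedging strategies and Proposition \ref{supersolncomp} for superstrategies to conclude $\pfrak(\xi) = \pfrak'(\xi) = X_0$. Your observation that feasibility $V \ge 0$ is precisely the lower bound needed to invoke these comparison results for merely continuous solutions is exactly the point the paper's proof relies on as well.
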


\begin{proof}
This theorem is a simple consequence of BSDE theory. We are looking for a pair $(X,\pi)$ for which
\begin{equation}
\begin{split}
 dX_t &= r_tX_tdt + \pi_t \cdot \sigma_t(dW_t + \theta_tdt),\\
 X_T &= \xi,\\
 X &\ge 0 \quad \Pbb\ \a.s.
\end{split}
\end{equation}
Rearranging this to $-dX_t = (-r_tX_t - (\sigma_t \pi_t)^*\theta_t)dt - (\sigma_t \pi_t)^*dW_t$, we see that this is exactly the form of the LBSDE \eqref{genLBSDE}. Let $(H^s:s \in [0,T])$ be the family of adjoint processes of this LBSDE, i.e.
\begin{equation}\label{deflator}
H^s_t := \Ecal \left( -\int_s^\cdot r_u du - \int_s^\cdot \theta_u \cdot dW_u \right)_t, \quad s\le t.
\end{equation}
Because of our boundedness assumptions, we can apply Proposition \ref{LBSDE} to find the unique square-integrable solution $(X,\sigma\pi)$ of the LBSDE, from which the invertibility of $\sigma$ gives us the process $\pi$. The wealth process $X$ is given by
\begin{equation}\label{minstrat}
X_t = \Ebb \left[ \xi H^t_T | \Fcal _t \right],
\end{equation}
where $\xi$ is non-negative and $H$ is positive so $X$ must be non-negative. So $(X,\pi) \in \Hfrak (\xi)$ (and $(X,\pi,0) \in \Hfrak '(\xi)$).

Now we let $(Y,\rho) \in \Hfrak (\xi)$ be another hedging strategy against $\xi$. Since $Y$ is non-negative, we can apply Corollary \ref{solncompare} to see that $Y \ge X$ $\Pbb$ a.s.. The same holds if we let $(Y,\rho,C) \in \Hfrak '(\xi)$ be a hedging superstrategy against $\xi$. By Proposition \ref{supersolncomp}, $Y \ge X$ $\Pbb$ a.s.. Hence $(X,\pi)$ achieves the fair price and the upper price of $\xi$, given by
\begin{equation}
\pfrak (\xi) = \pfrak '(\xi) = \Ebb \left[ \xi H^0_T \right].
\end{equation}
\end{proof}

\begin{rem}
In finance the family of adjoint processes $(H^s_t)$ given in \eqref{deflator}  is usually referred to as the {\em deflator}.
\end{rem}

\begin{rem}[Equivalent martingale measure]\label{EMM}
Let $H=H^0$ be the deflator started at $0$, and consider the process
\begin{equation*}
e^{\int_0^\cdot r_sds}H = \Ecal \left( - \int_0^\cdot \theta_u \cdot dW_u \right).
\end{equation*}
Since this is in $\Hcal^{2,1}_T$ (by Lemma \ref{Gammareg} and boundedness of $r$), it is a positive uniformly integrable martingale with respect to $\Pbb$ and so we can define a new probability measure $\Qbb$ by the Radon-Nikodym derivative
\begin{equation}
\frac{d\Qbb}{d\Pbb} = e^{\int_0^T r_sds}H_T.
\end{equation}
Under this new measure, the minimal hedging strategy $(X,\pi)$ in equation \eqref{minstrat} satisfies
\begin{align*}
\Ebb ^\Qbb\left[e^{-\int_0^T r_sds}\xi | \Fcal _t \right] &= \frac{\Ebb ^\Pbb\left[H_T\xi | \Fcal _t \right]}{\Ebb ^\Pbb\left[e^{\int_0^T r_sds}H_T | \Fcal _t \right]}\\
 &= \frac{\Ebb ^\Pbb\left[H_T\xi | \Fcal _t \right]}{e^{\int_0^t r_sds}H_t}\\
 &= e^{-\int_0^t r_sds}\Ebb ^\Pbb\left[ \xi H^t_T | \Fcal _t \right]\\
 &= e^{-\int_0^t r_sds}X_t
\end{align*}
by Bayes' rule. This shows that the discounted wealth process $e^{-\int_0^\cdot r_sds}X$ is a $\Qbb$-martingale for {\em any} positive square-integrable claim. The measure $\Qbb$ is called the {\em equivalent martingale measure} or the {\em risk-neutral measure}.
\end{rem}

\begin{rem}
If we allow the definitions of hedging and superhedging strategies to include admissible strategies, Theorem \ref{HedgeClaim} still holds by essentially the same proof.
\end{rem}

We give an example to illustrate what happens when we relax the condition of feasibility (or more generally admissibility) of hedging strategies in Theorem \ref{HedgeClaim}:

\begin{exmp}[From El Karoui, Peng and Quenez (1997)]
Recall Example \ref{Dudley} in the case $d=1$. We can construct an $\Rbb$-valued stochastic integral $\int_0^T \psi_s \cdot dW_s = 1$ such that $\int_0^T \lVert \psi_s \rVert ^2 ds < \infty$, $\Pbb$ a.s.. Construct the pair $(Y,\phi)$ by
\begin{equation}
\begin{split}
 Y_t &= H_t^{-1} \int_0^t \psi_s \cdot dW_s,\\
 \phi_t &= (\sigma_t^*)^{-1}\left( H_t^{-1} \psi_t + Y_t \theta_t \right).
\end{split}
\end{equation}
By an application of It\={o}'s lemma we can show that $(Y,\phi)$ is a self-financing strategy satisfying the LBSDE
\begin{equation*}
 dY_t = r_tY_tdt + \phi_t \cdot \sigma_t(dW_t + \theta_tdt)
\end{equation*}
such that $Y_0 = 0$ and $Y_T = H_T^{-1}$. It is what is known as an {\em arbitrage opportunity}. Now note that the pair $(H^{-1}, H^{-1}(\sigma^*)^{-1}\theta)$ satisfies the same LBSDE, but with $H^{-1}_0 = 1$. So we define
\begin{equation*}
(X^0,\pi^0) = (H^{-1} - Y, H^{-1}(\sigma^*)^{-1}\theta - \phi),
\end{equation*}
which is by linearity a solution of the LBSDE for which $X^0_0 = 1$, $X^0_T = 0$.

Suppose we relax admissibility for hedging strategies, and allow any trading strategy $(V,\rho)$ satisfying the LBSDE and the terminal condition $V_T = \xi$ to be a hedging strategy. Then for any $\lambda \in \Rbb$ and any $(X,\pi) \in \Hfrak (\xi)$, by linearity we have $(X + \lambda X^0,\pi + \lambda \pi^0) \in \Hfrak (\xi)$, which has initial wealth $X_0 + \lambda$. In this case the fair price is not well-defined.
\end{exmp}
\newpage

\section{Concave BSDEs and applications}\label{concave}

We assume $d=1$ in this section. Recall that in Proposition \ref{LBSDE} we derived an explicit solution for an LBSDE, under some regularity conditions. The purpose of this section is to extend the class of standard data for which we can find an explicit solution. We do this by remarking that a concave (resp. convex) function can be expressed as the infimum (resp. supremum) of a collection of linear functions, by means of the Legendre transform. We then give an application of this theory to more general investment problems.

Before we begin we would like to formalize the notion of supremum and infimum of a family of stochastic processes in a way more suited to measure theory, and to this end we follow Dellacherie (1977):

\begin{defn}[Essential supremum and infimum of processes]
Let $U$, $V$ be two processes defined up to time $T$. We say that $U$ {\em minorises} $V$ if
\begin{equation}
\left\{ \omega \in \Omega: \exists t \in [0,T]:\ U_t(\omega)>V_t(\omega) \right\}
\end{equation}
is a $\Pbb$-null set, i.e. $U \le V$ a.s.. Now let $\{ U^\alpha :\alpha \in I \}$ be a family of processes defined up to time $T$, with some indexing set $I$. We say that $U = \ess \inf_\alpha U^\alpha$ if:
\begin{enumerate}
 \item $U$ minorises $U^\alpha$ for every $\alpha \in I$, and
 \item If another process $V$ minorises $U^\alpha$ for every $\alpha \in I$, then $V$ minorises $U$.
\end{enumerate}
We likewise define {\em majorise} and $\ess \sup_\alpha V^\alpha$.
\end{defn}

Recall that the essential supremum and essential infimum of a family of random variables is defined similarly. We now state a result about essential infima without proof:

\begin{lem}[Dellacherie (1977)]
Let $\{ U^\alpha :\alpha \in I \}$ be a family of c\`adl\`ag processes defined up to time $T$, with indexing set $I$. Then $U = \ess \inf_\alpha U^\alpha$ exists and there exists a sequence $(\alpha_n) \in I$ such that $U = \inf_nU^{\alpha_n}$.
\end{lem}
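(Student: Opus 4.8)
The plan is to establish the existence of the essential infimum $U = \operatorname{ess\,inf}_\alpha U^\alpha$ by reducing the problem to the corresponding (known) statement for families of random variables, and then to upgrade this to the pathwise statement for c\`adl\`ag processes using a countable dense set of times together with right-continuity. The key difficulty is the second claim: that the essential infimum is attained as a countable infimum $U = \inf_n U^{\alpha_n}$. The standard device here is a lattice (or ``upward/downward directed family'') argument, so the first thing I would do is replace the raw family $\{U^\alpha : \alpha \in I\}$ by its closure under finite minima, noting that $\min(U^\alpha, U^{\alpha'})$ is again c\`adl\`ag and minorises both; this makes the family downward directed without changing the essential infimum.

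First I would fix a countable dense subset $D \subseteq [0,T]$ containing $T$, and for each fixed $t \in D$ invoke the classical existence theorem for the essential infimum of the family of random variables $\{U^\alpha_t : \alpha \in I\}$. The classical proof proceeds by considering the real number
\begin{equation*}
m = \inf \left\{ \Ebb\left[ \arctan\left( \inf_{n} U^{\alpha_n}_t \right) \right] : (\alpha_n) \subseteq I \text{ countable} \right\},
\end{equation*}
choosing a minimising sequence of countable subfamilies whose union is a single countable family $(\alpha_n)$, and showing that $\inf_n U^{\alpha_n}_t$ realises both the essential infimum at time $t$ and the value $m$. Here the bounded monotone transform $\arctan$ is used precisely so that the expectations are well-defined and finite regardless of integrability of the $U^\alpha$. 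Because we have arranged the family to be downward directed, the countable infimum can in fact be taken along a single decreasing sequence.

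The next step, and the genuinely process-level part, is to assemble these time-by-time constructions into one countable family that works simultaneously for all $t$. Since $D$ is countable and for each $t \in D$ we obtain a countable realising sequence, I would take the union over all $t \in D$ of these sequences, which is still countable, giving a single sequence $(\alpha_n)$ and a process $U := \inf_n U^{\alpha_n}$. By downward-directedness I may further assume $U^{\alpha_{n+1}} \le U^{\alpha_n}$ pointwise, so $U^{\alpha_n} \downarrow U$. Each $U^{\alpha_n}$ is c\`adl\`ag, and I would check that $U$ is the right-continuous process we want: the decreasing limit of c\`adl\`ag processes is upper semicontinuous from the right in an appropriate sense, but the clean statement we need is only that $U$ minorises every $U^\alpha$ and is maximal among such minorants. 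Minorisation is immediate on $D$ by construction, and extends to all of $[0,T]$ because any minorant $V$ of the whole family satisfies $V_t \le U^\alpha_t$ for all $t \in D$ outside a null set, whence $V_t \le U_t$ on $D$ outside a null set; right-continuity of both sides then pushes the inequality to all $t \in [0,T]$ almost surely.

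\textbf{The hard part} will be the passage from the pointwise (fixed-$t$) essential infimum to a single process that is simultaneously optimal at \emph{every} $t \in [0,T]$ while respecting the null-set bookkeeping: one must ensure that the exceptional null sets arising at the countably many times in $D$ can be amalgamated into a single null set, and that right-continuity genuinely transfers the minorisation inequality off the dense set. The downward-directedness reduction is what makes all of this go through cleanly, since it guarantees the realising sequence can be taken monotone and hence the limit process is well-behaved; without it one would have to argue more delicately about interchanging a general countable infimum with the limiting procedure. I would therefore present the directedness reduction as the first lemma-like step, treat the fixed-time existence as a citation to the scalar case, and spend the main effort verifying the two defining properties of the essential infimum for the assembled process $U$.
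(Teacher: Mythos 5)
The paper itself states this lemma \emph{without} proof (it is imported from Dellacherie (1977)), so your proposal must be judged on its own merits; and it has a genuine gap, located exactly where you placed the ``hard part'': the passage from the countable dense set $D$ to all of $[0,T]$. Your construction yields $U=\inf_n U^{\alpha_n}$, which realises the time-$t$ essential infimum of the random variables $\{U^\alpha_t\}$ for every $t\in D$. The maximality property (2) of the definition is then fine --- indeed it needs no density or right-continuity argument at all, since a minorant $V$ of the family satisfies, a.s., $V_t\le U^{\alpha_n}_t$ for \emph{all} $t$ and all $n$ simultaneously, hence $V\le U$ everywhere. What breaks is the minorisation property (1), i.e. $U\le U^\alpha$ for the indices $\alpha$ \emph{outside} your countable family. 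On $D$ this holds, but you extend it to all $t$ ``by right-continuity of both sides'', and $U$ is not right-continuous: a countable infimum of c\`adl\`ag processes --- even a decreasing limit of continuous ones --- is only upper semicontinuous from the right, $\limsup_{s\downarrow t}U_s\le U_t$, which is the wrong direction. From $U\le U^\alpha$ on $D$ and $s_k\downarrow t$, $s_k\in D$, you get $\limsup_k U_{s_k}\le U^\alpha_t$, but nothing forces $U_t\le \limsup_k U_{s_k}$. Downward-directedness does not repair this.

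Here is a counterexample to the argument (not to the lemma). Take all processes deterministic, $T=1$, and choose $D\ni 1$ countable dense with $\tfrac{1}{2}\notin D$. Let $U^{(n)}_t=1$ for $t\le\tfrac{1}{2}$ and $U^{(n)}_t=\max\bigl(0,1-n(t-\tfrac{1}{2})\bigr)$ for $t>\tfrac{1}{2}$ (continuous, decreasing in $n$), and let $V=\1bb_{[0,1/2)}$, which is c\`adl\`ag. The family $\{U^{(n)}:n\in\Nbb\}\cup\{V\}$ is closed under pairwise minima, since $\min(U^{(n)},U^{(m)})=U^{(\max(n,m))}$ and $\min(U^{(n)},V)=V$, so it is downward directed. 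For every $t\in D$ the time-$t$ essential infimum of the family equals $\inf_n U^{(n)}_t$ (it is $1$ for $t<\tfrac{1}{2}$ and $0$ for $t>\tfrac{1}{2}$), so the decreasing sequence $(U^{(n)})_n$ is a legitimate output of your selection procedure at every $t\in D$, and its union over $D$ is itself. Your construction then returns $U=\inf_n U^{(n)}=\1bb_{[0,1/2]}$, which is not even a lower bound of the family: $U_{1/2}=1>0=V_{1/2}$. The true essential infimum is the pointwise infimum $\1bb_{[0,1/2)}=V$, which the procedure misses. This shows the real content of Dellacherie's result: the countable subfamily must be chosen so as to control the possibly uncountably many ``dip times'' of members of the family, which no fixed countable dense set of times can see. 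That requires a different selection mechanism --- exploiting quantitatively that a c\`adl\`ag dip persists on an interval to the right, with a pigeonhole-type argument to reduce to countably many such dips, or Dellacherie's original argument --- rather than time-by-time scalar essential infima plus right-continuity.
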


\subsection{Extrema of standard data}
Let $( (f^\alpha,\xi^\alpha):\alpha \in I )$ be a family of data of BSDEs \eqref{genBSDE} with indexing set $I$. What happens if we take the essential infimum of this family as the data of our BSDE? We would like to be able to control solutions of this new BSDE using solutions of our original family of BSDEs. The following proposition is one of a few ways of doing this:

\begin{prop}\label{essinfs}
Let $\left( (f^\alpha,\xi^\alpha):\alpha \in I \right)$ be a family of standard data of BSDEs \eqref{genBSDE} with indexing set $I$ and continuous square-integrable solutions $((Y^\alpha,Z^\alpha):\alpha \in I)$. Let $(f,\xi)$ be another standard data with continuous square-integrable solution $(Y,Z)$. Suppose there exists $\bar\alpha \in I$ such that
\begin{equation}
\begin{split}
 f(\cdot,Y,Z) &= \ess\inf_\alpha f^\alpha (\cdot,Y,Z) = f^{\bar\alpha}(\cdot,Y,Z) \quad d\Pbb \otimes dt\ \a.s.,\\
 \xi &= \ess\inf_\alpha \xi^\alpha = \xi^{\bar\alpha} \quad \Pbb\ \a.s..
\end{split}
\end{equation}
Then the processes $Y$ and $Y^\alpha$ satisfy
\begin{equation}
Y = \ess\inf_\alpha Y^\alpha = Y^{\bar\alpha} \quad \Pbb\ \a.s..
\end{equation}
\end{prop}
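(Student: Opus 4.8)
The plan is to prove the two equalities by reducing the statement to two separate claims and then recombining them through the definition of the essential infimum. Specifically, it suffices to establish (a) that $Y = Y^{\bar\alpha}$ $\Pbb$ a.s., and (b) that $Y$ minorises $Y^\alpha$ for every $\alpha \in I$. Granting these, claim (b) is exactly the first defining property of $\ess\inf_\alpha Y^\alpha$; and for the second property, if a process $V$ minorises every $Y^\alpha$ then in particular it minorises $Y^{\bar\alpha}$, which by (a) equals $Y$, so $V$ minorises $Y$. Hence $Y = \ess\inf_\alpha Y^\alpha$, and (a) supplies the remaining identification with $Y^{\bar\alpha}$.

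For claim (a) I would argue by uniqueness. Since $(Y,Z)$ solves the BSDE with data $(f,\xi)$, it satisfies the integral equation \eqref{BSDEint}. Substituting the hypotheses $\xi = \xi^{\bar\alpha}$ and $f(\cdot,Y,Z) = f^{\bar\alpha}(\cdot,Y,Z)$ into \eqref{BSDEint} shows that $(Y,Z)$ also satisfies the integral equation associated with the data $(f^{\bar\alpha},\xi^{\bar\alpha})$. As this data is standard, Theorem \ref{ExistUniq} provides a unique continuous square-integrable solution, namely $(Y^{\bar\alpha},Z^{\bar\alpha})$; therefore $Y = Y^{\bar\alpha}$ $\Pbb$ a.s.

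For claim (b) I would invoke the comparison theorem. Fixing $\alpha \in I$, apply Theorem \ref{comparison} with $(f^\alpha,\xi^\alpha)$ playing the role of $(f^1,\xi^1)$ and $(f,\xi)$ that of $(f^2,\xi^2)$, so that the solution of the ``second'' BSDE is precisely $(Y,Z)$. The first hypothesis $\xi^\alpha \ge \xi$ holds because $\xi = \ess\inf_\alpha \xi^\alpha$ minorises each $\xi^\alpha$. The second hypothesis, $\delta_2 f = f^\alpha(\cdot,Y,Z) - f(\cdot,Y,Z) \ge 0$, is exactly the assumption that $f(\cdot,Y,Z) = \ess\inf_\alpha f^\alpha(\cdot,Y,Z)$ minorises each $f^\alpha(\cdot,Y,Z)$; crucially, this inequality is evaluated along $(Y,Z)$, which is the precise point at which the comparison theorem needs it. The comparison theorem then yields $Y^\alpha \ge Y$ $\Pbb$ a.s., which is claim (b).

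I do not expect a serious analytic obstacle here; the content is in matching the hypotheses correctly. The one point that deserves care is conceptual: the comparison theorem delivers, for each $\alpha$ separately, an almost-sure inequality whose exceptional null set may depend on $\alpha$, and one must note that this per-$\alpha$ minorisation is exactly what the \emph{definition} of $\ess\inf$ demands, so that no uniform estimate over the (possibly uncountable) index set $I$ is required. Existence of $\ess\inf_\alpha Y^\alpha$ as a bona fide process is already guaranteed by Dellacherie's lemma; the argument above only has to identify that object with $Y$ and with $Y^{\bar\alpha}$.
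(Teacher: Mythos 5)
Your proof is correct and follows essentially the same route as the paper's: the comparison theorem (with $(f^\alpha,\xi^\alpha)$ as the first data and $(f,\xi)$ as the second, exploiting that $\delta_2 f$ is evaluated along $(Y,Z)$) gives $Y \le Y^\alpha$ for each $\alpha$, and uniqueness from Theorem \ref{ExistUniq} identifies $(Y,Z)$ with $(Y^{\bar\alpha},Z^{\bar\alpha})$. The only cosmetic difference is that you verify the two defining properties of the essential infimum directly, whereas the paper sandwiches $\ess\inf_\alpha Y^\alpha$ between $Y$ and $Y^{\bar\alpha}$; these are the same argument.
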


\begin{proof}[Proof from Quenez (1997)]
Notice that $(f^\alpha,\xi^\alpha)$ and $(f,\xi)$ are data that satisfy the conditions of the comparison theorem (Theorem \ref{comparison}). So $Y_t \le Y^\alpha _t$ $\forall t \in [0,T]$ $\Pbb$ a.s. for each $\alpha$ and hence $Y \le \ess\inf_\alpha Y^\alpha$ $\Pbb$ a.s. by the definition of essential infimum.

Now we see that $(Y,Z)$ is a continuous square-integrable solution to the BSDE \eqref{genBSDE} with data $(f^{\bar\alpha},\xi^{\bar\alpha})$, so by uniqueness, $(Y,Z)=(Y^{\bar\alpha},Z^{\bar\alpha})$ $\Pbb$ a.s.. Hence
\begin{equation*}
\ess\inf_\alpha Y^\alpha \ge Y = Y^{\bar\alpha} \ge \ess\inf_\alpha Y^\alpha \quad \Pbb\ \a.s..
\end{equation*}
\end{proof}

\subsection{Concave drivers}
Recall the definitions of a {\em concave} function and of a {\em convex} function. Also recall that for any $m \in \Nbb$ we have an involution on the set of convex functions $\left\{ f: \Rbb ^m \to \Rbb ^m \cup \{+\infty\} \right\}$ known as the {\em convex conjugate} (or {\em Legendre transform} in the case $m=1$), and given by $f \mapsto f^*$ where
\begin{equation}
f^*(p) = \sup_{x \in \Rbb ^m} \left( p \cdot x - f(x) \right).
\end{equation}
The conjugate function $f^*(p)$ can be interpreted as $-1$ times the value at $x=0$ of the tangent hyperplane to $f(x)$ with gradient $p$. An important property of the convex conjugate is that it is an involution, i.e. self inverse:
\begin{equation}
f(x) = \sup_{p \in \Rbb ^m} \left( x \cdot p - f^*(p) \right).
\end{equation}
This shows that $f$ can be written as the supremum of a family of linear functions.

We proceed with a modification of this theory. Let $f(t,y,z)$ be a standard generator of a BSDE \eqref{genBSDE} and moreover suppose that $f$ is concave (not convex!) in $(y,z)$. Let $\xi \in \Lcal^{2,1}_T$, and let $(Y,Z)$ be the square-integrable solution of the BSDE with standard data $(f,\xi)$. Let $C$ be a Lipschitz constant for $f$, and let $K= [-C,C]^{n+1} \subseteq \Rbb \times \Rbb ^n$.

\begin{defn}
The {\em polar process} $F: \Omega \times[0,T] \times \Rbb \times \Rbb^n \to \Rbb$ associated with $f$ is the convex function given by
\begin{equation}\label{polar}
F(\omega,t,\beta,\gamma) = \sup_{(y,z) \in \Rbb \times \Rbb ^n}(f(\omega,t,y,z) - \beta y - \gamma \cdot z).
\end{equation}
The {\em effective domain} of $F$ is given by
\begin{equation}
\Dcal_F := \left\{ (\omega,t,\beta,\gamma) \in \Omega \times[0,T] \times \Rbb \times \Rbb^n: F(\omega,t,\beta,\gamma) < \infty \right\}.
\end{equation}
For $(\omega,t) \in \Omega \times[0,T]$, denote the $(\omega,t)$-section of $\Dcal_F$ by $\Dcal_F^{(\omega,t)} \subseteq \Rbb \times \Rbb ^n$.
\end{defn}

\begin{rem}\label{conjrel}
Equation \eqref{polar}, along with the involutive property of the convex conjugate, shows us that the conjugacy relation in this case is
\begin{equation}\label{conjug}
f(\omega,t,y,z) = \inf_{(\beta,\gamma) \in \Dcal_F^{(\omega,t)}}(F(\omega,t,\beta,\gamma) + \beta y + \gamma \cdot z).
\end{equation}
Note that we could extend the infimum above to be over any bounded superset of $\Dcal_F^{(\omega,t)}$, because $F$ is infinite outside of $\Dcal_F^{(\omega,t)}$. This equation has the important interpretation that it allows $f$ to be expressed as the infimum of a family of functions that are linear in $(y,z)$.
\end{rem}

\begin{rem}
For all $(\omega,t) \in \Omega \times[0,T]$, we have $\Dcal_F^{(\omega,t)} \subseteq K$. This is because if, for example, $|\beta| > C$, then the Lipschitz property gives
\begin{equation*}
f(\omega,t,y,z) - \beta y - \gamma \cdot z \ge -C|y| + f(\omega,t,0,z) - \beta y - \gamma \cdot z
\end{equation*}
which is unbounded above in $y$, and hence $(\beta,\gamma) \notin \Dcal_F^{(\omega,t)}$ for any $\gamma \in \Rbb^n$.
\end{rem}

\begin{defn}
Let $(\beta,\gamma)$ be predictable processes, known as {\em control parameters}. Let the linear driver $f^{\beta,\gamma}: \Omega \times[0,T] \times \Rbb \times \Rbb^n \to \Rbb$ be given by
\begin{equation}
f^{\beta,\gamma}(\omega,t,y,z) = F(\omega,t,\beta_t,\gamma_t) + \beta_t y + \gamma_t \cdot z.
\end{equation}
Let the set of {\em admissible control parameters} be given by
\begin{equation}
\Acal = \left\{ (\beta,\gamma) \text{ predictable, $K$-valued}: F(\cdot,\beta,\gamma) \in \Hcal^{2,1}_T \right\}.
\end{equation}
\end{defn}

\begin{rem}
It's easy to see that if $(\beta,\gamma) \in \Acal$, then $f^{\beta,\gamma}$ is a standard driver.
\end{rem}

We seek to apply Proposition \ref{essinfs} to the driver $f$ and the family $\{f^{\beta,\gamma}:(\beta,\gamma) \in \Acal \}$, and need a few lemmas to justify this. The first is given without proof:

\begin{lem}[Measurable selection theorem; Kuratowski--Ryll-Nadzewski (1965)]\label{MeasSelect}
Let $(E,\Ecal)$ be a measurable space and let $X$ be a Polish space. Denote the power set of $X$ by $\Pcal X$. Let $F: E \to \Pcal X$ be a point-to-set mapping and assume that
\begin{enumerate}
 \item $F(\omega) \ne \emptyset$ $\forall \omega \in E$,
 \item For every open set $G \subseteq X$,
\begin{equation*}
\{ \omega \in E: F(\omega) \cap G \ne \emptyset \} \in \Ecal.
\end{equation*}
\end{enumerate}
Then there is an $\Ecal$-measurable function $f$, known as a selection function, such that $f(\omega) \in F(\omega)$ $\forall \omega \in E$.
\end{lem}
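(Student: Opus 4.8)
The plan is to reconstruct $f$ as a uniform limit of countably-valued measurable ``approximate selections,'' which is the classical argument of Kuratowski and Ryll-Nadzewski. Throughout I fix a complete metric $d$ compatible with the topology of $X$, which I may assume satisfies $d \le 1$, together with a countable dense set $\{x_k : k \in \Nbb\} \subseteq X$. I would first reduce to the case where each $F(\omega)$ is \emph{closed}: replacing $F(\omega)$ by its closure leaves the set $\{\omega : F(\omega) \cap G \ne \emptyset\}$ unchanged for every open $G$ (if a point of $\overline{F(\omega)}$ lies in the open set $G$, then so does a genuine point of $F(\omega)$), so both hypotheses are preserved while the target values only grow. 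This reduction is what ultimately makes the limit land inside $F(\omega)$.

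The engine of the proof is an approximation step I would isolate as a sublemma: given a measurable $g : E \to X$ with countable range and a radius $\rho > 0$ with $\dist(g(\omega), F(\omega)) < \rho$ for all $\omega$, I can produce a measurable $h : E \to X$ with countable range such that $\dist(h(\omega), F(\omega)) < \rho/2$ and $d(h(\omega), g(\omega)) < 3\rho/2$ for all $\omega$. For each $k$ put
\[
A_k = \{\omega \in E : d(x_k, g(\omega)) < 3\rho/2\} \cap \{\omega \in E : F(\omega) \cap B(x_k, \rho/2) \ne \emptyset\}.
\]
The first factor lies in $\Ecal$ because $g$ is measurable, the second by hypothesis (2). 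A triangle-inequality argument shows the $A_k$ cover $E$: choose $y \in F(\omega)$ with $d(g(\omega), y) < \rho$, then $x_k$ with $d(x_k, y) < \rho/2$, so that $y \in F(\omega) \cap B(x_k, \rho/2)$ and $d(x_k, g(\omega)) < 3\rho/2$. Disjointifying through $A_k \setminus \bigcup_{j < k} A_j$ and setting $h = x_k$ on the $k$-th piece yields the required map.

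With the sublemma available I would build recursively a sequence $(f_n)$ of countably-valued measurable maps satisfying $\dist(f_n(\omega), F(\omega)) < 2^{-n}$ and $d(f_{n+1}(\omega), f_n(\omega)) < 3\cdot 2^{-(n+1)}$, starting from a base map $f_0$ with $\dist(f_0(\omega), F(\omega)) < 1$ obtained from the same covering construction using only nonemptiness of $F(\omega)$ and density. The increment bound makes $\sum_n d(f_{n+1}(\omega), f_n(\omega))$ converge uniformly, so by completeness of $d$ the pointwise limit $f(\omega) := \lim_n f_n(\omega)$ exists for every $\omega$; as a pointwise limit of measurable maps into a metric space it is $\Ecal$-measurable. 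Finally $\dist(f(\omega), F(\omega)) = \lim_n \dist(f_n(\omega), F(\omega)) = 0$, and closedness of $F(\omega)$ forces $f(\omega) \in F(\omega)$.

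The main obstacle is purely in the bookkeeping of the recursive step: the two radii (here $3\rho/2$ for staying near the previous approximation and $\rho/2$ for halving the distance to $F$) must be chosen so that the covering property, the measurability of each $A_k$, and the summability of the increments hold simultaneously, so that the approximations improve geometrically while remaining Cauchy. Once those constants are made compatible, the measurability claims and the limiting argument are routine. I would also flag that closedness of the values $F(\omega)$ is genuinely used at the final step, so the conclusion must be read with $F(\omega)$ closed (the construction otherwise only guarantees $f(\omega) \in \overline{F(\omega)}$).
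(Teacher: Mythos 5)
The paper does not actually prove this lemma: it is stated as a citation to Kuratowski and Ryll-Nardzewski (1965), introduced with ``The first is given without proof,'' so there is nothing in the paper to compare your argument against. Your reconstruction is the classical successive-approximation proof of the measurable selection theorem, and it is correct. The sets $A_k$ are measurable, the first factor by measurability of $g$ and the second by hypothesis (2) applied to the open ball $B(x_k,\rho/2)$; the triangle-inequality argument does show the $A_k$ cover $E$; disjointification gives the countably-valued improvement map $h$; and your constants ($\rho/2$ for the distance to $F$, $3\rho/2$ for the increment) make the recursion close up, with $\sum_n 3\cdot 2^{-(n+1)} < \infty$ giving uniform convergence of $(f_n)$ to a measurable limit $f$ satisfying $\dist(f(\omega),F(\omega)) = 0$ for every $\omega$.

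The one substantive point is the one you flag yourself: the last step needs each $F(\omega)$ to be closed, and your closure reduction only yields $f(\omega) \in \overline{F(\omega)}$. This is not a defect of your proof but of the statement as reproduced in the paper, which omits the closed-values hypothesis of the actual Kuratowski--Ryll-Nardzewski theorem; for arbitrary nonempty values the conclusion $f(\omega) \in F(\omega)$ cannot be obtained by this method (and fails in general). The omission is harmless where the lemma is applied in the paper: in Lemma \ref{OptimContr} the values are the minimizer sets of a lower semicontinuous function over the compact set $K$, and in the logarithmic-utility argument they are the sets $\Pi_{C_t}(\theta_t)$ with $C_t = \sigma_t^*\tilde C$ closed (closedness of $C_t$ following from uniform ellipticity); both families of values are closed, so your proof covers both uses.
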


The details of this lemma are beyond us, but it suffices for us to know that $\Rbb^m$ is a Polish space for any $m \ge 1$, and also that any closed subset of a Polish space (e.g. $K$) is a Polish space.

\begin{lem}\label{infattained}
For any $(\omega,t,y,z)$, the infimum in the conjugacy relation \eqref{conjug} is achieved in $K$ by some pair $(\beta,\gamma)$.
\end{lem}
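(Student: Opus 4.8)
The plan is to recognise the infimum in \eqref{conjug} as the minimisation of a lower semi-continuous function over the compact set $K$, and then to invoke the standard fact that such a minimisation attains its value.

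First I would fix $(\omega,t,y,z)$ and define the objective $g: \Rbb \times \Rbb^n \to \Rbb \cup \{+\infty\}$ by $g(\beta,\gamma) = F(\omega,t,\beta,\gamma) + \beta y + \gamma \cdot z$. The key observation is that $g$ is lower semi-continuous in $(\beta,\gamma)$. Indeed, by the defining formula \eqref{polar}, $F(\omega,t,\cdot,\cdot)$ is a pointwise supremum of the affine (hence continuous) maps $(\beta,\gamma) \mapsto f(\omega,t,y',z') - \beta y' - \gamma \cdot z'$, and a supremum of continuous functions is always lower semi-continuous; adding the continuous linear term $\beta y + \gamma \cdot z$ preserves this.

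Next I would reduce the domain of the infimum to $K$. By the remark following \eqref{conjug}, the infimum over $\Dcal_F^{(\omega,t)}$ equals the infimum over $K$, since $F = +\infty$ outside $\Dcal_F^{(\omega,t)}$ and $\Dcal_F^{(\omega,t)} \subseteq K$; moreover $K = [-C,C]^{n+1}$ is closed and bounded, hence compact. The heart of the argument is then the extreme value theorem for lower semi-continuous functions: set $m := \inf_K g$, which by the above equals $f(\omega,t,y,z)$, take a minimising sequence $(\beta_k,\gamma_k) \in K$ with $g(\beta_k,\gamma_k) \to m$, extract a convergent subsequence $(\beta_{k_j},\gamma_{k_j}) \to (\beta^*,\gamma^*) \in K$ by compactness, and use lower semi-continuity to conclude $g(\beta^*,\gamma^*) \le \liminf_j g(\beta_{k_j},\gamma_{k_j}) = m$. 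Since $g \ge m$ on $K$ by definition of the infimum, this forces $g(\beta^*,\gamma^*) = m$. Finally, because $f(\omega,t,y,z)$ is real-valued, $m$ is finite, so $F(\omega,t,\beta^*,\gamma^*) < \infty$ and hence $(\beta^*,\gamma^*) \in \Dcal_F^{(\omega,t)} \subseteq K$, giving the claimed minimiser.

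The main obstacle is really just recognising the lower semi-continuity of $F$ as a supremum of affine functions and combining it with the compactness of $K$; once these are in place the conclusion is essentially immediate. Note that no appeal to the measurable selection theorem (Lemma \ref{MeasSelect}) is needed for this pointwise statement — that result will presumably enter only at the later stage where one wishes to select such minimisers in a way that is measurable in $(\omega,t)$.
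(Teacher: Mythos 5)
Your proposal is correct and follows essentially the same route as the paper's proof: both arguments combine lower semi-continuity of $F(\omega,t,\cdot,\cdot)$ (the paper cites the standard fact that convex conjugates are lower semi-continuous, which is proved exactly by your supremum-of-affine-functions observation) with compactness of $K$ and a convergent minimising sequence. Your additional closing step, checking that the minimiser lies in $\Dcal_F^{(\omega,t)}$ because the infimum is finite, is a harmless refinement of what the paper leaves implicit.
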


\begin{proof}[Proof from El Karoui, Peng and Quenez (1997)]
We fix a quadruple $(\omega,t,y,z)$. By the infimum \eqref{conjug}, there exists a sequence $(\beta^n,\gamma^n)_{n \in \Nbb} \in \Dcal_F^{(\omega,t)}$ such that
\begin{equation*}
f(\omega,t,y,z) = \lim_{n \to \infty}(F(\omega,t,\beta^n,\gamma^n) + \beta^n y + \gamma^n \cdot z).
\end{equation*}
Since $\Dcal_F^{(\omega,t)}$ is contained in a compact set, we can assume without loss of generality that $(\beta^n,\gamma^n)_n$ converges to some $(\beta,\gamma) \in K$ by the Bolzano--Weierstrass theorem. $F(\omega,t,\cdot,\cdot)$ is a convex conjugate, so by a well-known result in convex analysis, it is lower semi-continuous. Therefore
\begin{align*}
F(\omega,t,\beta,\gamma) + \beta y + \gamma \cdot z &\le \lim_{n \to \infty}(F(\omega,t,\beta^n,\gamma^n) + \beta^n y + \gamma^n \cdot z)\\
 &= f(\omega,t,y,z)\\
 &= \inf_{(\beta',\gamma') \in K}(F(\omega,t,\beta',\gamma') + \beta' y + \gamma' \cdot z)\\
 &\le F(\omega,t,\beta,\gamma) + \beta y + \gamma \cdot z
\end{align*}
where we have extended the infimum to be over the set $K$ by Remark \ref{conjrel}. So
\begin{equation*}
f(\omega,t,y,z) = F(\omega,t,\beta,\gamma) + \beta y + \gamma \cdot z.
\end{equation*}
\end{proof}

We now prove that the main condition needed to invoke Proposition \ref{essinfs} holds in this case:

\begin{lem}\label{OptimContr}
There exists an optimal control $(\bar\beta,\bar\gamma) \in \Acal$ such that
\begin{equation}
f(\cdot,Y,Z) = f^{\bar\beta,\bar\gamma}(\cdot,Y,Z).
\end{equation}
\end{lem}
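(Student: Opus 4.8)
The plan is to produce $(\bar\beta,\bar\gamma)$ by making a measurable, pointwise choice of minimiser in the conjugacy relation \eqref{conjug}, evaluated along the solution $(Y,Z)$, and then to verify that this choice lies in $\Acal$. Concretely, for each $(\omega,t) \in \Omega \times [0,T]$ define the point-to-set map
\begin{equation*}
G(\omega,t) = \left\{ (\beta,\gamma) \in K : F(\omega,t,\beta,\gamma) + \beta Y_t(\omega) + \gamma \cdot Z_t(\omega) = f(\omega,t,Y_t(\omega),Z_t(\omega)) \right\},
\end{equation*}
that is, the set of pairs in $K$ attaining the infimum in \eqref{conjug} at the point $(y,z) = (Y_t(\omega),Z_t(\omega))$. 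By Lemma \ref{infattained} this set is nonempty for every $(\omega,t)$ (note that on $K \setminus \Dcal_F^{(\omega,t)}$ the value $F$ is $+\infty$, so the argmin over $K$ coincides with the argmin over $\Dcal_F^{(\omega,t)}$). A $\Pfrak$-measurable selection of $G$ is then precisely a pair of predictable, $K$-valued processes $(\bar\beta,\bar\gamma)$ satisfying $f(\cdot,Y,Z) = f^{\bar\beta,\bar\gamma}(\cdot,Y,Z)$, which is the identity we want.

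To obtain such a selection I would invoke the measurable selection theorem (Lemma \ref{MeasSelect}) with $E = \Omega \times [0,T]$ carrying the predictable $\sigma$-algebra $\Pfrak$ and $X = K$, which is Polish as a closed subset of $\Rbb^{n+1}$. The first hypothesis is the nonemptiness just noted. The second hypothesis, that $\{(\omega,t): G(\omega,t) \cap O \ne \emptyset\} \in \Pfrak$ for every open $O$, is the crux of the argument and the main obstacle. To establish it I would first record that $F$ is jointly measurable: in $F(\omega,t,\beta,\gamma) = \sup_{(y,z)}(f(\omega,t,y,z) - \beta y - \gamma\cdot z)$ the integrand is continuous in $(y,z)$ (by the Lipschitz property), so the supremum may be taken over a countable dense subset of $\Rbb\times\Rbb^n$, making $F$ measurable for $\Pfrak\otimes\Bcal^{n+1}$. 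Hence
\begin{equation*}
g(\omega,t,\beta,\gamma) := F(\omega,t,\beta,\gamma) + \beta Y_t(\omega) + \gamma\cdot Z_t(\omega)
\end{equation*}
is jointly measurable and, for fixed $(\omega,t)$, lower semicontinuous in $(\beta,\gamma)$ (lower semicontinuity of $F$ was already used in Lemma \ref{infattained}), with continuous linear part. Thus $g$ is a normal integrand whose argmin correspondence over the compact set $K$ is exactly $G$; the standard fact that the argmin of a normal integrand over a compact set is a measurable, closed-valued correspondence supplies the required measurability. Applying Lemma \ref{MeasSelect} then yields a $\Pfrak$-measurable, hence predictable, selection $(\bar\beta,\bar\gamma)$.

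It remains to check $(\bar\beta,\bar\gamma) \in \Acal$, i.e. that $F(\cdot,\bar\beta,\bar\gamma) \in \Hcal^{2,1}_T$. By the defining property of the selection,
\begin{equation*}
F(\cdot,\bar\beta,\bar\gamma) = f(\cdot,Y,Z) - \bar\beta Y - \bar\gamma \cdot Z.
\end{equation*}
Here $f(\cdot,Y,Z) \in \Hcal^{2,1}_T$ by Proposition \ref{fisHcal2}, since $f$ is a standard driver and $(Y,Z) \in \Hcal^{2,1}_T \times \Hcal^{2,n}_T$; and because $\bar\beta,\bar\gamma$ are $K$-valued they are bounded, so $\bar\beta Y$ and $\bar\gamma\cdot Z$ are square-integrable, being bounded processes times square-integrable ones. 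Therefore $F(\cdot,\bar\beta,\bar\gamma)$ is a sum of elements of $\Hcal^{2,1}_T$, giving $(\bar\beta,\bar\gamma)\in\Acal$ and the stated optimality, which completes the argument.
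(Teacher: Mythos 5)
Your proof is correct and follows essentially the same route as the paper's: the argmin point-to-set map, non-emptiness via Lemma \ref{infattained}, the measurable selection theorem (Lemma \ref{MeasSelect}) applied on $(\Omega\times[0,T],\Pfrak)$ with values in the compact Polish set $K$, and the identity $F(\cdot,\bar\beta,\bar\gamma) = f(\cdot,Y,Z) - \bar\beta Y - \bar\gamma \cdot Z$ together with boundedness of $K$-valued processes to verify $(\bar\beta,\bar\gamma)\in\Acal$. The only difference is that you spell out the verification of the selection theorem's second (measurability) hypothesis---joint measurability of $F$ via suprema over a countable dense set, plus lower semicontinuity in $(\beta,\gamma)$, making the argmin correspondence measurable---which the paper disposes of in a single sentence by appealing to the predictability of $f(\cdot,Y,Z)$, $Y$ and $Z$.
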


\begin{proof}[Proof from El Karoui, Peng and Quenez (1997)]
We aim to construct a point-to-set function from the measurable space $(\Omega \times [0,T],\Pfrak)$ to the Polish space $(K,\Bcal ^{n+1})$ and then use the measurable selection theorem \ref{MeasSelect}. By Lemma \ref{infattained}, given $(\omega,t) \in \Omega \times [0,T]$, the set of all $(\beta,\gamma) \in K$ that minimise \eqref{conjug} in the case that $f(\omega,t,y,z) = f(\omega,t,Y_t(\omega),Z_t(\omega))$ is non-empty. Moreover, the predictability of $f(\cdot,Y,Z)$, $Y$ and $Z$ ensure that the second condition of the measurable selection theorem holds. Hence we can find a predictable pair $(\bar\beta,\bar\gamma)$ taking values in $K$ such that for all $(\omega,t)$,
\begin{equation}
f^{\bar\beta,\bar\gamma}(\omega,t,Y_t(\omega),Z_t(\omega)) = \inf_{(\beta,\gamma) \in \Dcal_F^{(\omega,t)}}(F(\omega,t,\beta,\gamma) + \beta Y_t(\omega) + \gamma \cdot Z_t(\omega))
\end{equation}
and hence
\begin{equation}
f(\cdot,Y,Z) = f^{\bar\beta,\bar\gamma}(\cdot,Y,Z)
\end{equation}
where we have suppressed the dependence on $\omega$. Now since $f(\cdot,Y,Z)$, $Y$ and $Z$ are square-integrable and $(\bar\beta,\bar\gamma)$ is bounded, the equation
\begin{equation*}
F(\cdot,\bar\beta,\bar\gamma) = f(\cdot,Y,Z) - \bar\beta Y - \bar\gamma \cdot Z
\end{equation*}
illustrates that $F(\cdot,\bar\beta,\bar\gamma) \in \Hcal^{2,1}_T$. So $(\bar\beta,\bar\gamma) \in \Acal$.
\end{proof}

Now for each $(\beta,\gamma) \in \Acal$, we denote the square-integrable solution of the associated LBSDE with data $(f^{\beta,\gamma},\xi)$ by $(Y^{\beta,\gamma},Z^{\beta,\gamma})$. We now state our main theorem:

\begin{thm}\label{concaveBSDE}
Let $f$ be a concave standard driver and $\{f^{\beta,\gamma}:(\beta,\gamma) \in \Acal \}$ the associated linear standard drivers satisfying
\begin{equation*}
f = \ess\inf_{(\beta,\gamma) \in \Acal} f^{\beta,\gamma} \quad d\Pbb \otimes dt \ \a.s..
\end{equation*}
Then
\begin{equation*}
Y = \ess\inf_{(\beta,\gamma) \in \Acal} Y^{\beta,\gamma}\quad \Pbb \ \a.s..
\end{equation*}
\end{thm}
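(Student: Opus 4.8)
The plan is to recognise this theorem as essentially a direct application of Proposition \ref{essinfs}, with the analytic heavy lifting already supplied by Lemma \ref{OptimContr}. I would take the indexing set to be $I = \Acal$, the distinguished data to be $(f,\xi)$ with its continuous square-integrable solution $(Y,Z)$, and the family of data to be $\{(f^{\beta,\gamma},\xi):(\beta,\gamma)\in\Acal\}$. Note that every member of this family shares the \emph{same} terminal condition $\xi \in \Lcal^{2,1}_T$, and each $f^{\beta,\gamma}$ is a standard driver, so each member is standard data with a unique continuous square-integrable solution $(Y^{\beta,\gamma},Z^{\beta,\gamma})$. The terminal-condition hypothesis of Proposition \ref{essinfs} is then immediate: since all terminal values coincide, $\xi = \ess\inf_{(\beta,\gamma)\in\Acal}\xi = \xi^{\bar\beta,\bar\gamma}$ trivially, whatever distinguished index we eventually pick.

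The substance of the proof lies in verifying the driver hypothesis, namely
\[
f(\cdot,Y,Z) = \ess\inf_{(\beta,\gamma)\in\Acal} f^{\beta,\gamma}(\cdot,Y,Z) = f^{\bar\beta,\bar\gamma}(\cdot,Y,Z) \quad d\Pbb\otimes dt \text{ a.s.}
\]
for a suitable $(\bar\beta,\bar\gamma)\in\Acal$. I would establish the two inequalities separately. For the bound ``$\le$'', the conjugacy relation \eqref{conjug} of Remark \ref{conjrel} expresses $f$ pointwise as an infimum of the linear functions $(y,z)\mapsto F(\cdot,\beta,\gamma)+\beta y + \gamma\cdot z$; evaluating at $(y,z)=(Y_t,Z_t)$ for each admissible $K$-valued $(\beta,\gamma)$ gives $f(\cdot,Y,Z)\le f^{\beta,\gamma}(\cdot,Y,Z)$, whence $f(\cdot,Y,Z)\le \ess\inf_{(\beta,\gamma)} f^{\beta,\gamma}(\cdot,Y,Z)$ by definition of the essential infimum. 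For the reverse inequality I would invoke Lemma \ref{OptimContr}, which furnishes an optimal control $(\bar\beta,\bar\gamma)\in\Acal$ with $f(\cdot,Y,Z) = f^{\bar\beta,\bar\gamma}(\cdot,Y,Z)$; this immediately yields $\ess\inf_{(\beta,\gamma)} f^{\beta,\gamma}(\cdot,Y,Z) \le f^{\bar\beta,\bar\gamma}(\cdot,Y,Z) = f(\cdot,Y,Z)$. The two bounds combine to the displayed equality, with distinguished index $\bar\alpha = (\bar\beta,\bar\gamma)$.

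With both hypotheses in hand, Proposition \ref{essinfs} applies verbatim and produces $Y = \ess\inf_{(\beta,\gamma)\in\Acal} Y^{\beta,\gamma} = Y^{\bar\beta,\bar\gamma}$ $\Pbb$ a.s., which is exactly the claim. Internally this is where the comparison theorem (giving $Y\le Y^{\beta,\gamma}$ for every control, since $\delta_2 f = f^{\beta,\gamma}(\cdot,Y,Z)-f(\cdot,Y,Z)\ge 0$) and uniqueness of solutions (identifying $Y$ with $Y^{\bar\beta,\bar\gamma}$, since $(Y,Z)$ solves the BSDE with data $(f^{\bar\beta,\bar\gamma},\xi)$) do their work, but all of that is already packaged inside the proposition.

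I do not expect a genuine analytic obstacle at this stage, precisely because the two hard ingredients are proven earlier: the measurable-selection argument that turns the pointwise optimizer of \eqref{conjug} into an \emph{admissible predictable process} (Lemma \ref{OptimContr}), and the comparison-plus-uniqueness machinery of Proposition \ref{essinfs}. The one point demanding care is conceptual rather than computational: the statement's hypothesis $f = \ess\inf_{(\beta,\gamma)\in\Acal} f^{\beta,\gamma}$ is an infimum over the \emph{process} class $\Acal$, which is a priori stronger than the pointwise conjugacy \eqref{conjug} over $K$. It is exactly Lemma \ref{OptimContr} that bridges this gap by guaranteeing the pointwise infimum at $(Y,Z)$ is attained within $\Acal$, so that no value is lost in passing from pointwise minimisation to minimisation over admissible controls; I would make sure to flag this so the reader sees why the essential infimum recovers $f$ rather than merely dominating it.
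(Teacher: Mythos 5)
Your proposal is correct and follows exactly the paper's own route: the paper proves Theorem \ref{concaveBSDE} precisely by invoking Lemma \ref{OptimContr} to obtain an optimal control $(\bar\beta,\bar\gamma) \in \Acal$ with $f(\cdot,Y,Z) = f^{\bar\beta,\bar\gamma}(\cdot,Y,Z) = \ess\inf_{(\beta,\gamma)\in\Acal} f^{\beta,\gamma}(\cdot,Y,Z)$ and then applying Proposition \ref{essinfs}. Your write-up merely makes explicit what the paper leaves implicit, namely the two inequalities behind the essential-infimum identity (the ``$\le$'' from the conjugacy relation \eqref{conjug}, the ``$\ge$'' from the attainment in Lemma \ref{OptimContr}) and the trivially shared terminal condition.
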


\begin{proof}[Proof from El Karoui, Peng and Quenez (1997)]
From Lemma \ref{OptimContr} we have that
\begin{equation*}
f(\cdot,Y,Z) = f^{\bar\beta,\bar\gamma}(\cdot,Y,Z) = \ess\inf_{(\beta,\gamma) \in \Acal} f^{\beta,\gamma}(\cdot,Y,Z) \quad d\Pbb \otimes dt \ \a.s..
\end{equation*}
so the result follows directly from Proposition \ref{essinfs}.
\end{proof}

Now let's use what we know about LBSDEs. Take $(\beta,\gamma) \in \Acal$, so that $(\beta,\gamma)$ is bounded, predictable, and $F(\cdot,\beta,\gamma) \in \Hcal^{2,1}_T$. Let $(\Gamma ^{\beta,\gamma}_{s,t})_{t \in [s,T]}$ denote the adjoint process associated with the LBSDE with standard data $(f^{\beta,\gamma},\xi)$ and solution $(Y^{\beta,\gamma},Z^{\beta,\gamma})$. Then $Y^{\beta,\gamma}$ is given by
\begin{equation*}
Y^{\beta,\gamma}_t = \Ebb \left[ \xi \Gamma ^{\beta,\gamma}_{t,T} + \displaystyle\int_t^T \Gamma ^{\beta,\gamma}_{t,s} F(s,\beta_s,\gamma_s) ds | \Fcal _t \right]
\end{equation*}
and so we have an explicit expression for $(Y_t)_{t \le T}$:
\begin{equation}
Y_t = \ess\inf_{(\beta,\gamma) \in \Acal} \Ebb \left[ \xi \Gamma ^{\beta,\gamma}_{t,T} + \displaystyle\int_t^T \Gamma ^{\beta,\gamma}_{t,s} F(s,\beta_s,\gamma_s) ds | \Fcal _t \right].
\end{equation}

\subsection{Application: European claims in convex markets}
Recall the self-financing condition of a trading strategy in our dynamically complete market of Section \ref{Eurodyncomp}, represented by the following BSDE:
\begin{align*}
dV_t &= r_tV_tdt + \pi_t \cdot \sigma_t(dW_t + \theta_tdt)\\
 &= (r_t V_t + \pi_t \cdot \sigma_t \theta_t)dt + \pi_t \cdot \sigma_t dW_t.
\end{align*}
Sometimes the market might be a bit more interesting. Note that the pair $(V, \sigma^* \pi)$ in the above wealth equation corresponds to the solution $(Y,Z)$ of a BSDE, so we now consider a general wealth equation
\begin{equation}\label{genwealth}
-dY_t = b(t,Y_t,Z_t)dt - Z_t^* dW_t,
\end{equation}
where $b: \Omega \times [0,T] \times \Rbb \times \Rbb^n \to \Rbb$ is a standard driver, and the volatility matrix $\sigma$ satisfies our previous assumptions, i.e. $\Rbb^{n \times n}$-valued, predictable and bounded, and such that $\sigma_t$ is invertible $\Pbb$ a.s. $\forall t \in [0,T]$ with bounded inverse. In Section \ref{Eurodyncomp} we took
\begin{equation*}
b(t,y,z) = -r_ty-z^*\theta_t.
\end{equation*}
Let $C$ be a Lipschitz constant for $b$ and let $K= [-C,C]^{n+1} \subseteq \Rbb \times \Rbb^n$. We apply our previous results by assuming that $b(t,y,z)$ is convex in $(y,z)$. If $(V,\sigma^* \pi)$ is a solution of \eqref{genwealth}, then $(-V,-\sigma^* \pi)$ is a solution of the BSDE
\begin{equation}\label{negb}
-dY_t = (-b(t,-Y_t,-Z_t))dt - Z_t^* dW_t,
\end{equation}
and notice that $-b(t,-y,-z)$ is a standard driver with Lipschitz constant $C$, and is concave in $(y,z)$. Let $B$ be the convex polar process associated with $-b(t,-y,-z)$, given by
\begin{align*}
B(\omega,t,\beta,\gamma) &= \sup_{(y,z) \in \Rbb \times \Rbb ^n}(-b(\omega,t,-y,-z) - \beta y - \gamma \cdot z)\\
 &= \sup_{(y,z) \in \Rbb \times \Rbb ^n}( \beta y + \gamma \cdot z - b(\omega,t,y,z)),
\end{align*}
so that the conjugacy relation is
\begin{align}\label{conjugB}
-b(\omega,t,-y,-z) &= \inf_{(\beta,\gamma) \in B}(B(\omega,t,\beta,\gamma) + \beta y + \gamma \cdot z),\\
\Rightarrow \qquad b(\omega,t,y,z) &= \sup_{(\beta,\gamma) \in B}(\beta y + \gamma \cdot z - B(\omega,t,\beta,\gamma)).
\end{align}
Note that this means that $(b,B)$ are a pair of convex conjugates. Let the set of admissible control parameters be
\begin{equation*}
\Acal = \left\{ (\beta,\gamma) \text{ predictable, $K$-valued}: B(\cdot,\beta,\gamma) \in \Hcal^{2,1}_T \right\}.
\end{equation*}
For $(\beta,\gamma) \in \Acal$, let
\begin{equation}
b^{\beta,\gamma}(\omega,t,y,z) :=\beta_t y + \gamma_t \cdot z - B(\omega,t,\beta_t,\gamma_t).
\end{equation}
be a linear standard driver. We can now apply Theorem \ref{concaveBSDE}:

\begin{prop}\label{convhedge}
Let $\xi \in \Lcal^{2,1}_T$. Let $(X,\sigma^* \pi)$ be the unique square-integrable solution of BSDE \eqref{genwealth} with convex standard parameters $(b,\xi)$. Let $(X^{\beta,\gamma},\sigma^*\pi^{\beta,\gamma})$ be the square-integrable solution of the LBSDE with standard data $(b^{\beta,\gamma},\xi)$. Then $\Pbb$ a.s.,
\begin{equation}
X = \ess\sup_{(\beta,\gamma) \in \Acal} X^{\beta,\gamma}.
\end{equation}
\end{prop}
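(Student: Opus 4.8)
The plan is to reduce the convex statement to the concave case already settled in Theorem \ref{concaveBSDE} via the sign-flip $(y,z)\mapsto(-y,-z)$, and then transport the resulting essential infimum back into an essential supremum by negation. The whole argument is a matter of carefully matching up solutions and control sets under this transformation.

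First I would set $\tilde f(\omega,t,y,z) := -b(\omega,t,-y,-z)$, which as noted in the text is a concave standard driver with the same Lipschitz constant $C$, and whose convex polar process (in the sense of the definition of polar process) is exactly $B$, since $B(\omega,t,\beta,\gamma) = \sup_{(y,z)}(\tilde f(\omega,t,y,z) - \beta y - \gamma\cdot z)$. The linear drivers attached to $\tilde f$ by the concave theory are $\tilde f^{\beta,\gamma}(\omega,t,y,z) = B(\omega,t,\beta_t,\gamma_t) + \beta_t y + \gamma_t\cdot z$, and because $\Acal$ depends on $b$ only through $B$ and the fixed cube $K$, the admissible set attached to $\tilde f$ is literally the $\Acal$ defined in this subsection. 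By the conjugacy relation \eqref{conjugB} together with Lemma \ref{OptimContr} applied to $\tilde f$, the hypothesis $\tilde f = \ess\inf_{(\beta,\gamma)\in\Acal} \tilde f^{\beta,\gamma}$ of Theorem \ref{concaveBSDE} holds.

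Next I would identify the relevant solutions. Since $(X,\sigma^*\pi)$ solves \eqref{genwealth} with terminal value $\xi$, the pair $(-X,-\sigma^*\pi)$ solves the BSDE with driver $\tilde f$ and terminal value $-\xi$, which is precisely the content of \eqref{negb}. A direct computation gives the analogous identity at the linear level, $\tilde f^{\beta,\gamma}(\omega,t,y,z) = -b^{\beta,\gamma}(\omega,t,-y,-z)$, so that by the same substitution $(-X^{\beta,\gamma},-\sigma^*\pi^{\beta,\gamma})$ solves the LBSDE with driver $\tilde f^{\beta,\gamma}$ and terminal value $-\xi$. By uniqueness of square-integrable solutions (Theorem \ref{ExistUniq}), the $\tilde f$-solution with terminal value $-\xi$ must be $-X$, and the $\tilde f^{\beta,\gamma}$-solution must be $-X^{\beta,\gamma}$.

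Applying Theorem \ref{concaveBSDE} to $\tilde f$ then yields $-X = \ess\inf_{(\beta,\gamma)\in\Acal}(-X^{\beta,\gamma})$ $\Pbb$-a.s., and I would finish by negating, using the elementary duality $\ess\inf_\alpha(-U^\alpha) = -\ess\sup_\alpha U^\alpha$ at the level of the definition of essential infimum, to conclude $X = \ess\sup_{(\beta,\gamma)\in\Acal} X^{\beta,\gamma}$. The genuine substance is all borrowed from Theorem \ref{concaveBSDE}; the hard part is purely bookkeeping, namely keeping the signs in the terminal conditions and in the identity $\tilde f^{\beta,\gamma}(\cdot,y,z) = -b^{\beta,\gamma}(\cdot,-y,-z)$ consistent throughout, and checking that $\Acal$ really is unchanged by the sign-flip so that the two essential extrema are taken over the same index set.
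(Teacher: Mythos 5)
Your proposal is correct and takes essentially the same route as the paper's own proof: both reduce to Theorem \ref{concaveBSDE} via the sign flip $(y,z)\mapsto(-y,-z)$, identify $(-X,-\sigma^*\pi)$ and $(-X^{\beta,\gamma},-\sigma^*\pi^{\beta,\gamma})$ as the square-integrable solutions for the concave driver $-b(\cdot,-y,-z)$ and its linear drivers $-b^{\beta,\gamma}(\cdot,-y,-z)$, and then negate the resulting essential infimum. Your write-up is merely more explicit than the paper's about the terminal conditions, the invariance of $\Acal$ under the sign flip, and the uniqueness step, all of which the paper leaves implicit.
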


\begin{proof}
Recall that $(-X,-\sigma^* \pi)$ is the unique square-integrable solution to the BSDE \eqref{negb}. The pair $(-X^{\beta,\gamma},-\sigma^*\pi^{\beta,\gamma})$ satisfies the LBSDE
\begin{equation*}
 -dY_t = -b^{\beta,\gamma}(t,-Y_t,-Z_t)dt - Z_t^*dW_t.
\end{equation*}
Now
\begin{equation*}
-b^{\beta,\gamma}(\omega,t,-y,-z) =\beta_t y + \gamma_t \cdot z + B(\omega,t,\beta_t,\gamma_t)
\end{equation*}
so from equation \eqref{conjugB},
\begin{equation*}
-b(\cdot,-Y,-Z) = \ess\inf_{(\beta,\gamma) \in \Acal} (-b^{\beta,\gamma}(\cdot,-Y,-Z)) \quad d\Pbb \otimes dt \ \a.s..
\end{equation*}
By Theorem \ref{concaveBSDE},
\begin{equation*}
-X = \ess\inf_{(\beta,\gamma) \in \Acal} (-X^{\beta,\gamma})\quad \Pbb \ \a.s.
\end{equation*}
and the result follows.
\end{proof}

\begin{rem}
We have that $(X^{\beta,\gamma},\sigma^*\pi^{\beta,\gamma})$ satisfies the LBSDE
\begin{equation}
\begin{split}
 -dY_t &= (\beta_t Y_t + \gamma_t \cdot Z_t - B(t,\beta_t,\gamma_t))dt - Z_t^*dW_t;\\
 Y_T &= \xi.
\end{split}
\end{equation}
If we compare this to equation \eqref{superstrateqn}, we see that $(X^{\beta,\gamma},\pi^{\beta,\gamma})$ is a hedging strategy against $\xi$ in a fictitious market with bounded short rate $-\beta$, bounded risk premium $-\gamma$ and (not necessarily non-negative) instantaneous cost process $- B(\cdot,\beta,\gamma)$. Moreover, the price of the claim $\xi$ is equal to its price in an optimal such fictitious market.
\end{rem}

We now give a concrete example, demonstrating the applicability of this result.

\begin{exmp}[Markets with higher interest rate for borrowing]
Let us suppose that holding a negative quantity of the riskless asset incurs a higher interest rate $R > r$, where $r$ is the original short rate. We assume the process $R$ to be predictable and bounded. The self-financing condition in this case is thus
\begin{equation}\label{plusinterest}
dV_t = r_t V_t dt + \pi_t \cdot \sigma_t (\theta_t dt + dW_t) - (R_t - r_t) (\pi^0_t)^- dt
\end{equation}
where $(\pi^0)^- $ is the negative part of the holdings of the riskless asset $\pi^0$ (and is a non-negative process). Let's eliminate $\pi^0$ and rearrange into standard SDE form:
\begin{equation}
-dV_t = -\left( r_t V_t+ \pi_t \cdot \sigma_t \theta_t  - (R_t - r_t) (V_t - \pi_t \cdot \mathbf{1})^- \right) dt - \pi_t \cdot \sigma_t dW_t
\end{equation}
where $\mathbf{1} = (1,\ldots,1)^* \in \Rbb ^n$. Our driver in this case is given by
\begin{equation}
b(t,y,z) = - r_t y - \theta_t \cdot z + (R_t - r_t) (y - (\sigma_t^{-1}\mathbf{1}) \cdot z)^-
\end{equation}
which, with respect to $(y,z)$, is the sum of a linear function and a convex function, so is convex. The processes $r$, $\theta$, $R$ and $\sigma^{-1}$ are all bounded so $b$ is a standard driver. Let $\xi \in \Lcal^{2,1}_T$, and let $(X,\sigma^*\pi)$ be the square-integrable solution of the BSDE \eqref{genBSDE} with standard data $(b,\xi)$. The convex conjugate $B$ associated with $b$ is given by
\begin{align*}
B(\omega,t,\beta,\gamma) &=  \sup_{(y,z) \in \Rbb \times \Rbb ^n}( \beta y + \gamma \cdot z - b(\omega,t,y,z))\\
 &= \sup_{(y,z) \in \Rbb \times \Rbb ^n}( (\beta + r_t(\omega)) y + (\gamma + \theta_t(\omega)) \cdot z\\
 &\qquad \qquad \qquad - (R_t(\omega) - r_t(\omega)) (y - (\sigma_t(\omega)^{-1}\mathbf{1}) \cdot z)^-).
\end{align*}
The expression above takes a finite maximum over $(y,z)$ if and only if
\begin{align*}
-(R_t(\omega) - r_t(\omega)) &\le \beta + r_t(\omega) \le 0,\\
\gamma + \theta_t(\omega) &= -(\beta + r_t(\omega))\sigma_t(\omega)^{-1}\mathbf{1},
\end{align*}
and in this case
\begin{align*}
B(\omega,t,\beta,\gamma) &= \sup_{(y,z) \in \Rbb \times \Rbb ^n}( (\beta + r_t(\omega)) (y -(\sigma_t(\omega)^{-1}\mathbf{1}) \cdot z)\\
 &\qquad \qquad \qquad - (R_t(\omega) - r_t(\omega)) (y - (\sigma_t(\omega)^{-1}\mathbf{1}) \cdot z)^-).
\end{align*}
So $B$ is given by
\begin{equation}
B(\omega,t,\beta,\gamma) =
\begin{cases}
0 \quad \text{if } \beta \in [-R_t(\omega),r_t(\omega)] \text{, } \gamma = -\theta_t(\omega) -(\beta + r_t(\omega)) \sigma_t(\omega)^{-1}\mathbf{1};\\
\infty \quad \text{otherwise}.
\end{cases}
\end{equation}
Thus in this case, the set of admissible control parameters is
\begin{equation*}
\Acal = \left\{ (\beta,\gamma) \text{ predictable}: -R \le \beta \le -r, \gamma = -\theta -(\beta + r) \sigma^{-1}\mathbf{1} \right\}.
\end{equation*}
Notice that we no longer require that $(\beta,\gamma)$ take values in some bounded set $K$. This is because the conditions given already bound $(\beta,\gamma)$, and the Lipschitz constant of $b$ can be made arbitrarily large.

For a predictable process $\beta$ such that $-R \le \beta \le -r$, we hence define $(X^\beta,\sigma^*\pi^\beta)$ to be the solution of the LBSDE with standard data given by
\begin{equation}
\begin{split}
 -dY_t &= (\beta_t Y_t - (\theta_t + (\beta_t + r_t) \sigma_t^{-1}\mathbf{1}) \cdot Z_t)dt - Z_t^*dW_t;\\
 Y_T &= \xi.
\end{split}
\end{equation}
Then by Proposition \ref{convhedge}, it follows that $\Pbb$ a.s.,
\begin{equation}
X = \ess\sup_{-R \le \beta \le -r} X^\beta.
\end{equation}
In this case, for all admissible $(\beta,\gamma)$ we have that $B = 0$ so if $\xi$ is non-negative, so $X^\beta$ is non-negative by Corollary \ref{LBSDEcor}, and so $X$ is $\Pbb$ a.s. non-negative. $(X,\pi)$ is therefore a feasible square-integrable hedging strategy against $\xi$. By Corollary \ref{solncompare} and Proposition \ref{supersolncomp}, it follows that the fair price and the upper price of $\xi$ are given by
\begin{equation}
\pfrak (\xi) = \pfrak '(\xi) = \sup_{-R \le \beta \le -r} \Ebb \left[ \Gamma^\beta_T \xi \right],
\end{equation}
where
\begin{equation*}
\Gamma^\beta_T = \Ecal \left( \int \beta_s ds - \int (\theta_s + (\beta_s + r_s) \sigma_s^{-1}\mathbf{1}) \cdot dW_s \right)_T.
\end{equation*}
Finally, notice that if we take $\beta' = -\beta$ so that $r \le \beta' \le R$, then the strategy $(X^\beta,\pi^\beta)$ satisfies
\begin{equation}
\begin{split}
 dX^\beta_t &= (\beta'_t X^\beta_t + \pi^\beta_t \cdot \sigma_t \theta_t - (\beta'_t - r_t ) \pi^\beta_t \cdot \mathbf{1})dt + \pi^\beta_t \cdot \sigma_t dW_t \\
 &= r_t X^\beta_tdt + \pi^\beta_t \cdot \sigma_t (\theta_t dt + dW_t ) + (\beta'_t - r_t ) (\pi^\beta)^0_tdt
\end{split}
\end{equation}
which is a strategy against $\xi$ in a fictitious dynamically complete market with a short rate $r$ and risk premium $\theta$, and an instantaneous cost process that depends linearly on $(\pi^\beta)^0$, the quantity of wealth held in the riskless asset. Notice the similarities between this SDE and equation \eqref{plusinterest}, as expected.
\end{exmp}
\newpage

\section{Utility theory in incomplete markets}\label{utility}

In this section we consider the problem of finding a portfolio process that maximises the expected utility of our time-$T$ wealth $X_T$ in a potentially incomplete market. That is, given a {\em utility function} $U$ taking values in $\Rbb$, an initial value $x \in \Rbb$ and a set of {\em admissible portfolio processes} $\Acal$, our objective is to maximise
\begin{equation*}
\Ebb \left[ U\left( X^{(\pi)}_T \right) \right]
\end{equation*}
where $(X^{(\pi)},\pi)$ is a trading strategy such that $\pi \in \Acal$ and $X^{(\pi)} = x$. The utility function $U$ represents how much ``value'' our investor assigns to a given quantity of wealth, and is frequently assumed to have various nice properties such as monotonicity and concavity. Examples of utility functions include:
\begin{enumerate}
 \item {\em Logarithmic utility}: $U: (0,\infty) \to \Rbb$ such that
\begin{equation*}
U(x) = \log x.
\end{equation*}
 \item{\em Power utility}: $U_\gamma: [0,\infty) \to \Rbb$ with $\gamma \in (0,1)$ such that
\begin{equation*}
U_\gamma (x) = \frac{1}{\gamma} x^\gamma.
\end{equation*}
 \item{\em Exponential utility}: $U_\alpha: \Rbb \to \Rbb$ with $\alpha \in (0,\infty)$ such that
\begin{equation*}
U_\alpha (x) = - e^{-\alpha x}.
\end{equation*}
\end{enumerate}
Note that all of these functions satisfy both monotonicity and concavity.

\subsection{Preliminaries}

\begin{defn}
Let $A$ be a closed subset of $\Rbb^m$ and let $a \in \Rbb^m$. Let the {\em distance} between $a$ and $A$ be
\begin{equation}
\dist _A (a) = \min_{b \in A} |a-b|,
\end{equation}
the minimum distance between $a$ and an element of $A$. Obviously if $a \in A$ then $\dist _A (a) = 0$. Let the set
\begin{equation}
\Pi _A (a) = \left\{ b \in A: |a-b| = \dist _A (a) \right\}
\end{equation}
be the set of elements of $A$ attaining the minimum distance from $a$. The set $A$ is closed, so $\Pi _A (a)$ is non-empty, and may contain more than one element.
\end{defn}

We give without proof a measurability result relating to the distance function defined above. A proof, based on joint continuity, can be found in Hu, Imkeller and M\"{u}ller (2005).

\begin{lem}\label{distpred}
Let $(a_t)_{t \in [0,T]}$ and $(\sigma_t)_{t \in [0,T]}$ be $\Rbb^n$- and $\Rbb^{d \times n}$-valued predictable processes respectively. Let $\tilde C$ be a closed subset of $\Rbb^d$. Let $C_t = \sigma_t^*\tilde C$. Then the process
\begin{equation*}
d = (\dist (a_t,C_t))_{t \in [0,T]}
\end{equation*}
is predictable.
\end{lem}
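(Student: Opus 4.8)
The plan is to express the distance process as a countable infimum of manifestly predictable processes, exploiting the separability of the closed set $\tilde C$ together with the continuity of the relevant map.

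First I would write out the distance explicitly. For each $(\omega,t)$, using $C_t = \sigma_t^* \tilde C$,
\begin{equation*}
\dist(a_t,C_t) = \inf_{b \in C_t}|a_t - b| = \inf_{c \in \tilde C}|a_t - \sigma_t^* c|.
\end{equation*}
The obstruction to measurability is that $\tilde C$ is in general uncountable, so this infimum is not obviously $\Pfrak$-measurable. To remove this, I would invoke separability: since $\tilde C \subseteq \Rbb^d$, it admits a countable dense subset $D = \{c_k : k \in \Nbb\} \subseteq \tilde C$.

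Next I would carry out the reduction to a countable infimum, which is the heart of the argument. For fixed $(\omega,t)$ the map $c \mapsto |a_t - \sigma_t^* c|$ is continuous on $\Rbb^d$, being the composition of a linear map with the Euclidean norm. Hence the infimum over $\tilde C$ agrees with the infimum over $D$: given any $c \in \tilde C$, density furnishes a sequence in $D$ converging to $c$, and continuity transfers the value of the map, so that $\inf_{k}|a_t - \sigma_t^* c_k| \le |a_t - \sigma_t^* c|$; taking the infimum over $c \in \tilde C$ and noting the reverse inequality is immediate from $D \subseteq \tilde C$, I obtain
\begin{equation*}
\dist(a_t,C_t) = \inf_{k \in \Nbb} |a_t - \sigma_t^* c_k|.
\end{equation*}

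Finally I would check predictability term by term and pass to the infimum. For each fixed $k$, the process $t \mapsto \sigma_t^* c_k$ is predictable, since $\sigma$ is predictable and $c_k$ is a fixed vector; subtracting the predictable process $a$ and composing with the continuous norm preserves predictability, so $t \mapsto |a_t - \sigma_t^* c_k|$ is predictable. Because $\Pfrak$ is a genuine $\sigma$-algebra, a countable infimum of $\Pfrak$-measurable processes is again $\Pfrak$-measurable, whence $d$ is predictable. The main obstacle is the passage from the uncountable to the countable infimum; once this continuity-plus-separability step is secured, the remaining measurability bookkeeping is routine.
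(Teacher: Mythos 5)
Your proof is correct, but note that the paper itself does not prove this lemma: it is stated without proof, with a pointer to Hu, Imkeller and M\"{u}ller (2005) and the remark that their argument is ``based on joint continuity''. The two routes are genuinely different. The cited approach works with the deterministic function $(a,\Sigma) \mapsto \dist(a,\Sigma^*\tilde C)$ on $\Rbb^n \times \Rbb^{d\times n}$, establishes its continuity, and then composes with the predictable map $(\omega,t) \mapsto (a_t(\omega),\sigma_t(\omega))$; this is slick, but genuine joint continuity can fail when $\tilde C$ is unbounded and $\sigma_t$ is allowed to degenerate --- for instance with $d=n=1$ and $\tilde C = \Rbb$ one has $\dist(a,\sigma\Rbb) = 0$ for all $\sigma \ne 0$ but $\dist(a,0\cdot\Rbb) = |a|$ --- so that route implicitly leans on the full-rank/uniform-ellipticity hypotheses of the section in which the lemma is applied. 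Your separability argument buys generality and self-containment: the countable dense subset $D = \{c_k\} \subseteq \tilde C$ turns the distance into the countable infimum $\inf_k |a_t - \sigma_t^* c_k|$ of manifestly predictable processes, which is predictable with no hypothesis on $\sigma$ beyond predictability; in effect you only use that a countable infimum of measurable functions is measurable (upper semicontinuity in place of continuity). An incidental benefit of your formulation: phrasing $\dist$ as an infimum rather than the paper's minimum is the right move at this level of generality, since $C_t = \sigma_t^*\tilde C$ need not be closed, and the minimum need not be attained, when $\sigma_t$ is rank-deficient.
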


Our setup in this section differs from Section \ref{Eurodyncomp} in a number of ways:
\begin{enumerate}
 \item The volatility matrix $\sigma$ is now generalised to a full-rank predictable $\Rbb^{d \times n}$-valued process, where $d \le n$. The number $d$ is the number of risky assets in the market, and $n$ is the dimension of the Brownian motion $W$, which can be interpreted as the number of ``degrees of freedom'' in the market. Additionally, $\sigma \sigma^*$ is assumed to be {\em uniformly elliptic}, which means that there exist $\epsilon , K > 0$ such that $\Pbb$ a.s.,
\begin{equation}
\epsilon |x| \le |\sigma_t^* x| \le K |x| \quad \forall x \in \Rbb^d,\ \forall t \in [0,T].
\end{equation}
 \item Since $\sigma$ may no longer be invertible, we now define the $\Rbb^n$-valued risk premium $\theta$ by
\begin{equation}
\theta = \sigma^* (\sigma \sigma^*)^{-1} b \quad \Pbb \ \a.s.
\end{equation}
where $b$ is the predictable, bounded and $\Rbb^d$-valued vector of stock appreciation rates. This agrees with the original definition if $d=n$ and $\sigma$ is invertible. Since $b$ is bounded and $\sigma \sigma^*$ is uniformly elliptic, we have that $\theta$ is bounded.
 \item This is a constrained optimisation problem: We require that our portfolio processes only take values inside some (non-empty(!)) closed subset. Note that this subset is not assumed to be convex.
\end{enumerate}

We keep the same definition of a self-financing trading strategy, i.e. $(X,\pi)$ satisfying
\begin{equation*}
dX_t = r_t X_t dt + \pi_t \cdot \sigma_t(dW_t + \theta_tdt),
\end{equation*}
\begin{equation*}
\int_0^T |\sigma_t^*\pi_t|^2dt < \infty \quad \Pbb \ \a.s..
\end{equation*}

\begin{rem}
If $d < n$ then the market is called {\em incomplete}, since there exist square-integrable claims that cannot be hedged by a feasible trading strategy.
\end{rem}

\begin{defn}\label{proportion}
Sometimes it can be more instructive to describe the entries of the portfolio process $\pi$ by the proportion of the total wealth $X$ that they represent. For a trading strategy $(X,\pi)$ such that $X$ is positive, we therefore define the $\Rbb^d$-valued predictable process $\tilde\rho$ by
\begin{equation}
\tilde\rho^i_t =\frac{\pi^i_t}{X_t}
\end{equation}
so that the wealth equation can be written as
\begin{equation}
dX_t = X_t(r_tdt + \tilde\rho_t \cdot \sigma_t(dW_t + \theta_tdt)).
\end{equation}
This is useful because it allows us to write the total wealth as a stochastic exponential:
\begin{equation}
X = X_0 \Ecal \left( \int r_s ds + \int \tilde\rho_s \cdot \sigma_s(dW_s + \theta_sds) \right).
\end{equation}
Finally, we define the $\Rbb^n$-valued predictable process $\rho$ by $\rho = \sigma^* \tilde\rho$, for additional simplicity.
\end{defn}

\subsection{Logarithmic utility}
This is possibly the simplest of the three utility functions given above over which to optimise. We provide a generalisation of a result of Hu, Imkeller and M\"{u}ller (2005), who assumed the short rate $r$ to be zero.

As a constraint, we require that our process $\tilde\rho$ (see Definition \ref{proportion}) take values in some fixed non-empty closed subset $\tilde C \subseteq \Rbb^d$. In this section we will mainly be working with the process $\rho = \sigma^* \tilde\rho$, so for $t \in [0,T]$ we define
\begin{equation}
C_t = \sigma^*_t\tilde C.
\end{equation}
Note that by working with $\tilde\rho$ we are implicitly assuming that our wealth processes $X$ are positive.

\begin{defn}
Our set of admissible processes $\Acal_l$ is defined as the set of processes $\rho \in \Hcal^{2,n}_T$ such that $\rho_t \in C_t$ $d\Pbb \otimes dt$ a.s..
\end{defn}

Given $\rho \in \Acal_l$, denote the wealth process associated with $\rho$ by $X^{(\rho)}$, i.e.
\begin{equation}
X^{(\rho)} = X^{(\rho)}_0 \Ecal \left( \int r_s ds + \int \rho_s \cdot (dW_s + \theta_sds) \right).
\end{equation}
Fixing some $x > 0$, our objective is thus to maximise
\begin{equation*}
\Ebb \left[ \log \left( X^{(\rho)}_T \right) \right]
\end{equation*}
over pairs $(X^{(\rho)},\rho)$ such that $\rho \in \Acal_l$ and $X^{(\rho)}_0 = x$. Let
\begin{equation}
V(x) = \sup_{\substack{\rho \in \Acal_l \\ X^{(\rho)}_0 = x}} \Ebb \left[ \log \left( X^{(\rho)}_T \right) \right]
\end{equation}
be the value we are trying to find. It follows that
\begin{equation}\label{logvalfunc}
V(x) = \log(x) + \sup_{\rho \in \Acal_l} \Ebb \left[ \int_0^T \rho_s \cdot dW_s + \int_0^T \left(r_s + \rho_s \cdot \theta_s - \frac{1}{2}|\rho_s|^2\right)ds \right].
\end{equation}
Here is the rough plan of attack: We seek to define a suitable family of processes $(R^{(\rho)})$ indexed by $\rho \in \Acal_l$ such that
\begin{enumerate}
 \item $R^{(\rho)}_0 = R_0$ is constant,
 \item $R^{(\rho)}_T = \int_0^T \rho_s \cdot dW_s + \int_0^T (r_s + \rho_s \cdot \theta_s - \frac{1}{2}|\rho_s|^2)ds$ for all $\rho \in \Acal_l$ (as in \eqref{logvalfunc}),
 \item $R^{(\rho)}$ is a supermartingale for all $\rho \in \Acal_l$,
 \item $\exists \hat\rho \in \Acal_l$ such that $R^{(\hat\rho)}$ is a martingale.
\end{enumerate}
It will follow that $(X^{(\hat\rho)},\hat\rho)$ attains the maximum $V(x)$. To do this, we introduce the BSDE
\begin{equation}\label{optimBSDE}
\begin{split}
 -dY_t &= f(t,Y_t,Z_t)dt - Z_t^*dW_t;\\
 Y_T &= 0,
\end{split}
\end{equation}
where $f$ is an $\Rbb$-valued driver, to be specified later. This is equivalent to
\begin{equation*}
Y_t = \int_t^T f(s,Y_s,Z_s)ds - \displaystyle\int_t^T Z_s^*dW_s.
\end{equation*}
Assume that the BSDE \eqref{optimBSDE} (which we still haven't actually specified) has a solution $(Y,Z)$. We can hence define
\begin{equation*}
\begin{split}
 R^{(\rho)}_t &= \int_0^t \rho_s \cdot dW_s + \int_0^t \left(r_s -\frac{1}{2}|\rho_s - \theta_s|^2 + \frac{1}{2}|\theta_s|^2\right)ds + Y_t\\
 &= Y_0 + \int_0^t (\rho_s + Z_s) \cdot dW_s + \int_0^t \left(r_s -\frac{1}{2}|\rho_s - \theta_s|^2 + \frac{1}{2}|\theta_s|^2 - f(s,Y_s,Z_s)\right)ds,
\end{split}
\end{equation*}
which can be written as
\begin{equation}
d R^{(\rho)}_t =  (\rho_t + Z_t) \cdot dW_t + \left(r_t -\frac{1}{2}|\rho_t - \theta_t|^2 + \frac{1}{2}|\theta_t|^2 - f(t,Y_t,Z_t)\right)dt
\end{equation}
since the value of $Y_0$ is irrelevant (all we need is that it is independent of $\rho$). Notice that $R^{(\rho)}_T$ satisfies the terminal condition that we want. To get the required supermartingale property for $R^{(\rho)}$, we would like the drift term here to be non-positive, but not strictly negative. Hence we pick
\begin{equation}
f(t,y,z) = r_t + \frac{1}{2}|\theta_t|^2 - \frac{1}{2} \dist (\theta_t,C_t)^2
\end{equation}
which is in fact independent of its last two arguments, and predictable by Lemma \ref{distpred}. Therefore
\begin{equation}
d R^{(\rho)}_t =  (\rho_t + Z_t) \cdot dW_t - \left(\frac{1}{2}|\rho_t - \theta_t|^2 - \frac{1}{2} \dist (\theta_t,C_t)^2 \right)dt.
\end{equation}
This is all well and good, but now we have to show that a solution to our BSDE \eqref{optimBSDE} actually exists.

\begin{lem}
There is a unique continuous square-integrable solution $(Y,Z)$ to \eqref{optimBSDE}.
\end{lem}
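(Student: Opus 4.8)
The plan is to recognise this as a direct application of the Pardoux--Peng existence and uniqueness theorem (Theorem \ref{ExistUniq}): it suffices to verify that $(f,0)$ are standard data in the sense of Definition \ref{standard}, after which the conclusion follows immediately. The decisive simplification is that the chosen driver
\begin{equation*}
f(t,y,z) = r_t + \frac{1}{2}|\theta_t|^2 - \frac{1}{2}\dist(\theta_t,C_t)^2
\end{equation*}
has no dependence on $(y,z)$ whatsoever, so the uniform Lipschitz condition holds trivially (with Lipschitz constant $0$, or indeed any positive constant). Likewise the terminal condition $\xi = 0$ lies in $\Lcal^{2,1}_T$ without any work.

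The only genuine content is therefore checking that $f(\cdot,0,0) = f \in \Hcal^{2,1}_T$. Since $f$ is already known to be predictable by Lemma \ref{distpred}, over the finite horizon $[0,T]$ it is enough to show that $f$ is bounded; square-integrability then follows upon squaring and integrating against the finite measure $d\Pbb \otimes dt$. As $r$ and $\theta$ are assumed bounded, the terms $r_t$ and $\frac{1}{2}|\theta_t|^2$ cause no difficulty, so the whole question reduces to bounding $\dist(\theta_t,C_t)$.

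This last estimate is the crux of the argument. The plan is to exploit the non-emptiness of $\tilde C$: fix any reference point $\tilde c_0 \in \tilde C$, so that $\sigma_t^* \tilde c_0 \in C_t = \sigma_t^* \tilde C$ for every $t$, whence
\begin{equation*}
\dist(\theta_t,C_t) \le |\theta_t - \sigma_t^* \tilde c_0| \le |\theta_t| + |\sigma_t^* \tilde c_0| \le |\theta_t| + K|\tilde c_0|,
\end{equation*}
where the final inequality uses the upper uniform-ellipticity bound $|\sigma_t^* x| \le K|x|$. Boundedness of $\theta$ then yields a uniform bound on $\dist(\theta_t,C_t)$, and hence on $f$. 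With $f$ bounded and predictable we conclude $f \in \Hcal^{2,1}_T$, so that $(f,0)$ are standard data and Theorem \ref{ExistUniq} delivers the unique continuous square-integrable solution $(Y,Z)$ as required.
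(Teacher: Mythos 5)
Your proposal is correct and follows essentially the same route as the paper: both reduce the problem to checking that the $(y,z)$-independent driver $f$ is bounded, and both obtain the bound on $\dist(\theta_t,C_t)$ by fixing a reference point in $\tilde C$ and applying the uniform-ellipticity estimate $|\sigma_t^* x| \le K|x|$ together with the boundedness of $r$ and $\theta$. Your write-up is if anything slightly more careful, since you explicitly note the trivial Lipschitz condition, the membership $\xi = 0 \in \Lcal^{2,1}_T$, and the role of predictability (Lemma \ref{distpred}) in concluding $f \in \Hcal^{2,1}_T$.
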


\begin{proof}
Since the driver $f$ has no dependence on $(y,z)$, a sufficient condition for us to use our existence-uniqueness theorem is that $f(\cdot)$ is $\Pbb$ a.s. bounded.

The short rate $r$ and risk premium $\theta$ are both $\Pbb$ a.s. bounded, so let $M_1>0$ be an upper bound for $|r|$ and $M_2>0$ an upper bound for $|\theta|$. The constraint set $\tilde C$ is non-empty, so let $c \in \tilde C$. Then we have that $\sigma^*_t c \in C_t$ $\forall t \in [0,T]$. Then by uniform ellipticity, $\Pbb$ a.s. for all $t \in [0,T]$,
\begin{align*}
\dist (\theta_t,C_t) &\le |\theta_t - \sigma^*_t c|\\
 &\le |\theta_t| + |\sigma^*_t c|\\
 &\le M_2 + K|c|
\end{align*}
so
\begin{equation*}
|f(t)| \le M_1 + \frac{1}{2} M_2^2 +\frac{1}{2} (M_2 + K|c|)^2
\end{equation*}
and hence $f(\cdot)$ is $\Pbb$ a.s. bounded. Thus by Theorem \ref{ExistUniq} there is a unique square-integrable solution.
\end{proof}

Let us return to $R^{(\rho)}$, now given by
\begin{equation}
R^{(\rho)}_t = Y_0 + \int_0^t (\rho_s + Z_s) \cdot dW_s + \int_0^t -\left(\frac{1}{2}|\rho_s - \theta_s|^2 - \frac{1}{2} \dist (\theta_s,C_s)^2 \right)ds.
\end{equation}
The integrand of the stochastic integral is in $\Hcal^{2,n}_T$ so the stochastic integral is a martingale. Additionally, the integrand of the drift term is $d\Pbb \otimes dt$ a.s. non-positive, so $R^{(\rho)}$ is a supermartingale. We now seek to use the measurable selection theorem \ref{MeasSelect} to find $\hat\rho$.

\begin{lem}
There exists $\hat\rho \in \Acal_l$ such that $R^{(\hat\rho)}$ is a martingale.
\end{lem}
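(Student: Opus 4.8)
The plan is to read off when $R^{(\rho)}$ is a martingale. For any $\rho \in \Acal_l$ the integrand $\rho + Z$ of the stochastic integral lies in $\Hcal^{2,n}_T$ (it is the sum of $\rho \in \Hcal^{2,n}_T$ and the square-integrable $Z$), so $\int_0^\cdot (\rho_s + Z_s)\cdot dW_s$ is a true martingale; hence $R^{(\rho)}$ is a martingale iff its continuous finite-variation (drift) part, which starts at $0$ and is non-increasing, is identically constant, i.e. iff $d\Pbb \otimes dt$ a.s.
\[
\tfrac12|\rho_t - \theta_t|^2 = \tfrac12 \dist(\theta_t,C_t)^2 .
\]
By the definitions of $\dist(\theta_t,C_t)$ and $\Pi_{C_t}(\theta_t)$ this holds exactly when $\rho_t \in \Pi_{C_t}(\theta_t)$, the set of nearest points of $C_t$ to $\theta_t$. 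So the whole task reduces to exhibiting a predictable, square-integrable process $\hat\rho$ with $\hat\rho_t \in \Pi_{C_t}(\theta_t)$ for $d\Pbb \otimes dt$-a.e.\ $(\omega,t)$.

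First I would set up the point-to-set map $\Phi(\omega,t) = \Pi_{C_t(\omega)}(\theta_t(\omega)) \subseteq \Rbb^n$ and apply the measurable selection theorem (Lemma \ref{MeasSelect}) on $(\Omega\times[0,T],\Pfrak)$ with Polish target $\Rbb^n$. Non-emptiness of $\Phi(\omega,t)$ needs $C_t = \sigma_t^*\tilde C$ to be closed: by uniform ellipticity $\sigma_t^*$ is bounded below, hence proper, so it carries the closed set $\tilde C$ onto a closed subset of the finite-dimensional space $\image \sigma_t^*$, which is closed in $\Rbb^n$; the nearest-point set to $\theta_t$ in a non-empty closed set is then non-empty.

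The main obstacle is the second hypothesis of Lemma \ref{MeasSelect}: for every open $G \subseteq \Rbb^n$ the set $\{(\omega,t): \Phi(\omega,t)\cap G \neq \emptyset\}$ must be predictable. Here I would exploit the structure of $C_t$ directly: fix a countable dense subset $(\tilde c^k)_{k \in \Nbb}$ of $\tilde C$ and set $c^k_t := \sigma_t^*\tilde c^k$, which are predictable and, by continuity of $\sigma_t^*$, dense in $C_t$. Then $D_t := \dist(\theta_t,C_t) = \inf_k |\theta_t - c^k_t|$ is predictable (this is exactly Lemma \ref{distpred}), and writing $\Phi(\omega,t) = C_t \cap \overline{B}(\theta_t,D_t)$ one can express the event $\{\Phi \cap G \neq \emptyset\}$ through the countable family $(c^k_t)$ and the predictable maps $(\omega,t)\mapsto |\theta_t - c^k_t|$ and $D_t$, yielding a predictable set. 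This is the delicate measure-theoretic step, and it is precisely where the joint-continuity argument behind Lemma \ref{distpred} and the predictability of $\theta,\sigma$ are consumed; Lemma \ref{MeasSelect} then produces a predictable selection $\hat\rho$ with $\hat\rho_t \in \Phi(\omega,t)$ for all $(\omega,t)$.

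It remains to verify $\hat\rho \in \Acal_l$ and conclude. By construction $\hat\rho_t \in C_t$. For square-integrability, $|\hat\rho_t - \theta_t| = D_t = \dist(\theta_t,C_t)$ is bounded by the estimate already used in the previous lemma (choosing $c \in \tilde C$ gives $D_t \le |\theta_t - \sigma_t^* c| \le M_2 + K|c|$), so $|\hat\rho_t| \le 2M_2 + K|c|$ is bounded and hence $\hat\rho \in \Hcal^{2,n}_T$; thus $\hat\rho \in \Acal_l$. For this $\hat\rho$ the drift of $R^{(\hat\rho)}$ vanishes identically while its martingale part has integrand $\hat\rho + Z \in \Hcal^{2,n}_T$, so $R^{(\hat\rho)}$ is a true square-integrable martingale, which completes the proof.
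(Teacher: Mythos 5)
Your proof is correct and takes essentially the same route as the paper: you apply the measurable selection theorem (Lemma \ref{MeasSelect}) to the nearest-point correspondence $(\omega,t)\mapsto\Pi_{C_t(\omega)}(\theta_t(\omega))$, obtain a predictable selection $\hat\rho$, show it lies in $\Acal_l$ via the same boundedness estimate $|\hat\rho_t|\le 2M_2+K|c|$, and conclude that the drift of $R^{(\hat\rho)}$ vanishes while its martingale part has integrand $\hat\rho+Z\in\Hcal^{2,n}_T$. The only differences are points you make explicit that the paper's proof leaves implicit, namely the closedness of $C_t=\sigma_t^*\tilde C$ (via uniform ellipticity), which is needed for non-emptiness of the correspondence, and a sketch of why the second hypothesis of Lemma \ref{MeasSelect} holds; this is added detail rather than a change of method.
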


\begin{proof}
The mapping
\begin{equation}
(\omega,t) \mapsto \Pi_{C_t(\omega)}(\theta_t(\omega)) \equiv \left\{ x \in C_t(\omega): |x - \theta_t(\omega)| = d_t(\omega) \right\},
\end{equation}
where $d = (\dist (\theta_t,C_t))_{t \in [0,T]}$, is a point-to-set mapping between the measurable space $(\Omega \times [0,T],\Pfrak)$ and the Polish space $(\Rbb^n , \Bcal^n)$. Since $C_t(\omega)$ is closed, the set is non-empty for all $(\omega,t)$. The predictability of the processes $\theta$, $\sigma$ and $d$ (by Lemma \ref{distpred}) ensure that the second condition of the measurable selection theorem holds. Thus there exists an $\Rbb^n$-valued predictable process $\hat\rho$ such that $\hat\rho_t(\omega) \in C_t(\omega)$ $\forall (\omega,t)$ and
\begin{equation}
|\hat\rho_t(\omega) - \theta_t(\omega)| = \dist (\theta_t(\omega) , C_t(\omega)) \quad \forall (\omega,t).
\end{equation}
Now recalling $c \in \tilde C$ from before we have that $\Pbb$ a.s.,
\begin{align*}
|\hat\rho_t| &\le |\hat\rho_t - \theta_t| + |\theta_t|\\
 &= \dist (\theta_t , C_t) + |\theta_t|\\
 &\le 2M_2 + K|c|,
\end{align*}
i.e. $\hat\rho$ is $\Pbb$ a.s. bounded uniformly over $t \in [0,T]$. Thus $\hat\rho \in \Hcal^{2,n}_T$. So $\hat\rho \in \Acal_l$. Finally,
\begin{equation}
R^{(\hat\rho)}_t = Y_0 + \int_0^t (\hat\rho_s + Z_s) \cdot dW_s
\end{equation}
so $R^{(\hat\rho)}$ so a martingale.
\end{proof}
It only takes a few additional steps to prove our main theorem:

\begin{thm}
Our optimisation problem is maximised at $\hat\rho \in \Acal_l$ as defined above, with value
\begin{equation}
V(x) = \log(x) + \Ebb \left[ \int_0^T f(s) ds \right]
\end{equation}
where
\begin{equation*}
f(t) = r_t + \frac{1}{2}|\theta_t|^2 - \frac{1}{2} \dist (\theta_t,C_t)^2.
\end{equation*}

\end{thm}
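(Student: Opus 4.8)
The plan is to combine the three structural facts already in hand --- that $R^{(\rho)}$ is a supermartingale for every $\rho \in \Acal_l$, that $R^{(\hat\rho)}$ is a genuine martingale, and that the BSDE \eqref{optimBSDE} admits a square-integrable solution $(Y,Z)$ --- into a bound-plus-attainment argument. The key observation, which was built into the construction of $R^{(\rho)}$, is that its terminal value reproduces exactly the random functional inside the supremum in \eqref{logvalfunc}. Indeed, expanding $-\frac{1}{2}|\rho_s - \theta_s|^2 + \frac{1}{2}|\theta_s|^2 = \rho_s \cdot \theta_s - \frac{1}{2}|\rho_s|^2$ and using $Y_T = 0$ gives $R^{(\rho)}_T = \int_0^T \rho_s \cdot dW_s + \int_0^T (r_s + \rho_s \cdot \theta_s - \frac{1}{2}|\rho_s|^2)\,ds$, so that $V(x) = \log(x) + \sup_{\rho \in \Acal_l} \Ebb[R^{(\rho)}_T]$.

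First I would establish the upper bound. Because $R^{(\rho)}$ is a supermartingale and $R^{(\rho)}_0 = Y_0$ does not depend on $\rho$, we have $\Ebb[R^{(\rho)}_T] \le Y_0$ uniformly in $\rho \in \Acal_l$, so the supremum is at most $Y_0$. Next I would show this bound is sharp: for the admissible selection $\hat\rho \in \Acal_l$ constructed in the previous lemma, $R^{(\hat\rho)}$ is a martingale, whence $\Ebb[R^{(\hat\rho)}_T] = Y_0$. Together these give $\sup_{\rho \in \Acal_l} \Ebb[R^{(\rho)}_T] = Y_0$, attained at $\hat\rho$, and therefore $V(x) = \log(x) + Y_0$ with optimiser $(X^{(\hat\rho)},\hat\rho)$.

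It then remains to compute $Y_0$. Evaluating the integral form of \eqref{optimBSDE} at $t = 0$ yields $Y_0 = \int_0^T f(s)\,ds - \int_0^T Z_s^* dW_s$; since $Z \in \Hcal^{2,n}_T$ the stochastic integral has zero mean, and since $Y_0$ is $\Fcal_0$-measurable and therefore almost surely a constant, taking expectations gives $Y_0 = \Ebb[\int_0^T f(s)\,ds]$, which is the stated value. I expect no serious obstacle, since the analytic substance resides in the preceding lemmas; the only points needing care are the integrability checks that make the expectation manipulations legitimate --- namely that $\rho + Z \in \Hcal^{2,n}_T$ so the martingale part of $R^{(\rho)}$ is a true (not merely local) martingale, and that the non-positive drift is integrable so that $R^{(\rho)}$ is a genuine supermartingale --- both of which were already secured when that supermartingale property was proved.
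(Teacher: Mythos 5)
Your proposal is correct and follows essentially the same argument as the paper: the supermartingale property gives the upper bound $\Ebb[R^{(\rho)}_T] \le Y_0$, the martingale property of $R^{(\hat\rho)}$ gives attainment, and $Y_0$ is evaluated via the BSDE with the zero-mean stochastic integral. Your explicit remarks that $R^{(\rho)}_T$ recovers the functional in \eqref{logvalfunc} and that $Y_0$ is a.s. constant (by triviality of $\Fcal_0$) are points the paper leaves implicit, but they do not change the route.
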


\begin{proof}
Let $\rho \in \Acal_l$. Then by the supermartingale and martingale properties of $R^{(\rho)}$,
\begin{equation*}
\Ebb \left[ R^{(\rho)}_T \right] \le R^{(\rho)}_0 = R^{(\hat\rho)}_0 = \Ebb \left[ R^{(\hat\rho)}_T \right].
\end{equation*}
So
\begin{align*}
V(x) &= \log(x) + \sup_{\rho \in \Acal_l} \Ebb \left[ \int_0^T \rho_s \cdot dW_s + \int_0^T \left(r_s + \rho_s \cdot \theta_s - \frac{1}{2}|\rho_s|^2\right)ds \right]\\
 &= \log(x) + \sup_{\rho \in \Acal_l} \Ebb \left[ R^{(\rho)}_T \right]\\
 &= \log(x) + \Ebb \left[ R^{(\hat\rho)}_T \right]\\
 &= \log(x) + Y_0\\
 &= \log(x) + \Ebb \left[ \int_0^T f(s)ds - \int_0^T Z_s^*dW_s \right]\\
 &= \log(x) + \Ebb \left[ \int_0^T f(s)ds \right],
\end{align*}
where the last equality follows since $Z \in \Hcal^{2,n}_T$ so the stochastic integral is a martingale.
\end{proof}

\begin{rem}
\begin{enumerate}
 \item Suppose we have no set constraint, or equivalently $\tilde C = \Rbb^d$. It follows that $C_t = \sigma_t^* \Rbb^d = \image (\sigma_t^*)$, and by definition $\theta_t \in \image (\sigma_t^*)$ $\forall t \in [0,T]$ $\Pbb$ a.s., so we get the simplified form of the value function
\begin{equation}
\begin{split}
V(x) &= \log(x) + \Ebb \left[ \int_0^T \left( r_s + \frac{1}{2} |\theta_s|^2 \right) ds \right]\\
 &= \log(x) + \frac{1}{2} \lVert \theta \rVert ^2 + \Ebb \left[ \int_0^T r_s ds \right].
\end{split}
\end{equation}
 \item By using trading strategies described by $(X^{(\rho)},\rho)$ we are constraining $X^{(\rho)}$ to be positive. Were we to allow $X$ to be merely bounded below, our derived value of $V(x)$ may be greater, but we would not be able to take advantage of the properties of the stochastic exponential. We have also required that admissible processes $\rho$ be square-integrable, another constraint required for this derivation to work.
\end{enumerate}
\end{rem}

\begin{rem}
The proofs of the analogous results for power and exponential utility are slightly more complicated, but follow the same general idea of defining a family of processes $R$ which has nice martingale and supermartingale properties, and which incorporates an underlying BSDE. These proofs are given in Hu, Imkeller and M\"{u}ller (2005) in the case that the short rate $r$ is zero.
\end{rem}
\newpage

\end{document}